\documentclass{amsart}
\usepackage{amscd}
\usepackage{amssymb}
\usepackage[all]{xy}
\usepackage{mathrsfs}
\usepackage{upgreek}
\usepackage{stmaryrd}
\usepackage[colorlinks = true]{hyperref}
\usepackage{rotating}
\usepackage{tikz}
\usepackage{tikz-cd}
\usepackage{bbm}
\usetikzlibrary{matrix,arrows,cd,decorations.pathmorphing}
\usepackage{mathrsfs}
\usepackage{centernot} 
\usepackage{mathabx}

\newcommand{\bk}{\Bbbk}

\newcommand{\Z}{\mathbb{Z}}
\newcommand{\C}{\mathbb{C}}

\newcommand{\Gm}{\mathbb{G}_{\mathrm{m}}}

\newcommand{\sC}{\mathcal{C}}
\newcommand{\uC}{\underline{C}}
\newcommand{\uH}{\underline{H}}
\newcommand{\st}{\mathsf{st}}
\newcommand{\Parity}{\mathrm{Parity}}


\newcommand{\fg}{\mathfrak{g}}

\newcommand{\red}{{\mathrm{red}}}
\newcommand{\unip}{{\mathrm{unip}}}

\newcommand{\weyl}{\mathsf{M}}
\newcommand{\coweyl}{\mathsf{N}}
\newcommand{\tilt}{\mathsf{T}}


\newcommand{\cN}{\mathcal{N}}

\newcommand{\tcN}{{\widetilde{\mathcal{N}}}}

\newcommand{\bX}{\mathbf{X}}

\newcommand{\ext}{{\mathrm{ext}}}

\newcommand{\LV}{{\mathrm{LV}}}

\newcommand{\Wext}{W_\ext}

\newcommand{\cE}{\mathcal{E}}
\newcommand{\cF}{\mathcal{F}}
\newcommand{\cG}{\mathcal{G}}
\newcommand{\cL}{\mathcal{L}}
\newcommand{\cO}{\mathcal{O}}
\newcommand{\cV}{\mathcal{V}}
\newcommand{\cA}{\mathcal{A}}
\newcommand{\cS}{\mathcal{S}}
\newcommand{\cI}{\mathcal{I}}
\newcommand{\cH}{\mathcal{H}}
\newcommand{\cK}{\mathcal{K}}



\newcommand{\onabla}{\overline{\nabla}}
\newcommand{\oDelta}{\overline{\Delta}}


\newcommand{\IC}{\mathcal{IC}}

\newcommand{\cT}{\mathcal{T}}


\newcommand{\Fl}{\mathrm{Fl}}


\newcommand{\op}{{\mathrm{op}}}


\newcommand{\Rep}{\mathsf{Rep}}
\newcommand{\Irr}{\mathsf{Irr}}

\newcommand{\Coh}{\mathsf{Coh}}
\newcommand{\CohGGm}{\Coh^{G \times \Gm}}
\newcommand{\QCoh}{\mathsf{QCoh}}

\newcommand{\PCoh}{\mathsf{PCoh}}

\newcommand{\Tilt}{\mathsf{Tilt}}
\newcommand{\Silt}{\mathsf{Silt}}

\newcommand{\fD}{\mathfrak{D}}
\newcommand{\fS}{\mathfrak{S}}

\newcommand{\Db}{D^{\mathrm{b}}}


\newcommand{\D}{\mathbb{D}}
\newcommand{\cRHom}{R\mathcal{H}om}

\DeclareMathOperator{\Hom}{Hom}
\DeclareMathOperator{\Ext}{Ext}
\DeclareMathOperator{\Tor}{Tor}
\DeclareMathOperator{\End}{End}
\DeclareMathOperator{\Sym}{Sym}

\DeclareMathOperator{\supp}{supp}

\DeclareMathOperator{\Lie}{Lie}
\DeclareMathOperator{\codim}{codim}
\DeclareMathOperator{\rank}{rank}
\DeclareMathOperator{\res}{res}
\DeclareMathOperator{\ad}{ad}
\DeclareMathOperator{\cHom}{\mathcal{H}\mathit{om}}
\DeclareMathOperator{\cExt}{\mathcal{E}\mathit{xt}}

\newcommand{\id}{\mathrm{id}}
\newcommand{\simto}{\xrightarrow{\sim}}
\newcommand{\la}{\langle}
\newcommand{\ra}{\rangle}
\newcommand{\lb}{\mathord{\lbag}}
\newcommand{\rb}{\mathord{\rbag}}

\newcommand{\mbf}[1]{\mathbf{#1}}

\makeatletter
\def\lotimes{\@ifnextchar_{\@lotimessub}{\@lotimesnosub}}
\def\@lotimessub_#1{\mathchoice{\mathbin{\mathop{\otimes}^L}_{#1}}%
  {\otimes^L_{#1}}{\otimes^L_{#1}}{\otimes^L_{#1}}}
\def\@lotimesnosub{\mathbin{\mathop{\otimes}^L}}
\makeatother


\numberwithin{equation}{section}
\newtheorem{thm}{Theorem}[section]
\newtheorem{lem}[thm]{Lemma}
\newtheorem{prop}[thm]{Proposition}
\newtheorem{cor}[thm]{Corollary}

\theoremstyle{definition}
\newtheorem{defn}[thm]{Definition}

\theoremstyle{remark}
\newtheorem{rmk}[thm]{Remark}

\title[Silting complexes and the Humphreys conjecture]{Silting complexes of coherent sheaves and the Humphreys conjecture}

 \author{Pramod N. Achar}
 \address{Department of Mathematics\\
   Louisiana State University\\
   Baton Rouge, LA 70803\\
   U.S.A.}
 \email{pramod@math.lsu.edu}

 \author{William Hardesty}
 \address{School of Mathematics and Statistics\\
   University of Sydney\\
   Camperdown, NSW 2006\\
   U.S.A.}
 \email{hardes11@gmail.com}
 
 \thanks{P.A. was supported by NSF Grant No.~DMS-1802241. W.H. was supported by the ARC Discovery Grant No.~DP170104318.}

\begin{document}

\begin{abstract}
Let $G$ be a connected reductive algebraic group over an algebraically closed field $\bk$ of characteristic $p \ge 0$, and let $\cN$ be its nilpotent cone.  Under mild hypotheses, we construct for each nilpotent $G$-orbit $C$ and each indecomposable tilting vector bundle $\cT$ on $C$ a certain complex $\cS(C,\cT) \in \Db\CohGGm(\cN)$.  We prove that these objects are (up to shift) precisely the indecomposable objects in the coheart of a certain co-$t$-structure.  

We then show that if $p$ is larger than the Coxeter number, then the hypercohomology $H^\bullet(\cN, \cS(C,\cT))$ is identified with the cohomology of a tilting module for $G$.  This confirms a conjecture of Humphreys on the support of the cohomology of tilting modules.
\end{abstract}

\maketitle

\section{Introduction}

\subsection{The Humphreys conjecture}

Let $\mbf{G}$ be a connected reductive group over an algebraically closed field $\bk$ of characteristic $p$.  Assume that $p$ is larger than the Coxeter number $h$ for $\mbf{G}$.  Let $\mbf{G}_1$ be its first Frobenius kernel, and let $G = \mbf{G}/\mbf{G}_1$ be its Frobenius twist.  Let $\cN$ be the nilpotent variety in the Lie algebra of $G$.  It is well known that the algebra $\Ext^\bullet_{\mbf{G}_1}(\bk,\bk)$ is ($G$-equivariantly) isomorphic to the coordinate ring $\bk[\cN]$.  As a consequence, for any $\mbf{G}$-module $M$, the $\mbf{G}_1$-cohomology
\[
H^\bullet(\mbf{G}_1,M) = \Ext^\bullet_{\mbf{G}_1}(\bk,M)
\]
has the structure of a $G$-equivariant graded $\bk[\cN]$-module, or equivalently, a $G \times \Gm$-equivariant coherent sheaf on $\cN$.  

The main goal of this paper is to give a new description of this cohomology in the case where $M$ is an indecomposable tilting $\mbf{G}$-module.  This cohomology vanishes except for tilting modules of the form $M = \tilt(w_\lambda \cdot 0)$ (see Section~\ref{sec:humphreys} for this notation), where $\lambda$ is a dominant weight.  Using the results of~\cite{ar:rglg}, one can refine this problem as follows: for each $\lambda$, there is an object
\[
\fS_\lambda \in \Db\CohGGm(\cN)
\]
equipped with a canonical isomorphism (see~\cite[Propositions~9.4 and 9.5]{ar:dkf})
\[
R^\bullet\Gamma(\cN, \fS_\lambda) \cong H^\bullet(\mbf{G}_1, \tilt(w_\lambda \cdot 0)).
\]

The \emph{relative Humphreys conjecture} is a conjectural description of the support of $H^\bullet(\mbf{G}_1, \tilt(w_\lambda \dot 0))$ (or, equivalently, the support of $\fS_\lambda$), in terms of the combinatorics of Kazhdan--Lusztig cells.  Here is a brief summary of previous results on this conjecture:
\begin{description}
\item[Quantum case] In~\cite{bez:ctm}, Bezrukavnikov gave a description of the complex version $\fS_\lambda^\C$, and thereby proved the quantum-group analogue of the relative Humphreys conjecture.

\item[Reductive groups for $p \gg 0$] In~\cite{ahr:hcsv}, the authors and S.~Riche proved that the relative Humphreys conjecture is true when $p$ is ``large enough,'' i.e., larger than some unknown bound depending on $\mbf{G}$.  The proof involves a reduction to the quantum case studied by Bezrukavnikov.

\item[$\mathbf{GL}_n$ for $p > h = n+1$] The relative Humphreys conjecture for $\mathrm{GL}_n$ follows from work of the second author~\cite{hardesty}, as explained in~\cite[Remark~9.4(1)]{ahr:hcsv}.  A second and rather different proof was obtained by the authors in~\cite{ah:cot1}.
\end{description}

In this paper, we prove the relative Humphreys conjecture in full generality, for all reductive groups and all $p > h$.  The proof, which is based on a new description of the $\fS_\lambda$ in terms of ``silting complexes,'' is independent of the main arguments in~\cite{ahr:hcsv, bez:ctm}.  A side effect of the proof is an explicit description  of the coherent sheaf $H^\bullet(\mbf{G}_1,\tilt(w_\lambda \dot 0))$ over the open orbit in its support, conjectured in~\cite{ahr:ctmap}.

We remark on two other problems that are not addressed in this paper:
\begin{enumerate}
\item Humphreys originally proposed a conjectural description of $\Ext^\bullet_{\mbf{G}_1}(M,M)$ rather than of $\Ext^\bullet_{\mbf{G}_1}(\bk,M)$: see~\cite{hum:cmr}.  The original Humphreys conjecture has been proved for $\mathrm{GL}_n$ in~\cite{hardesty}, and for any $\mbf{G}$ when $p \gg 0$ in~\cite{ahr:hcsv}.  See~\cite[Lemma~8.11 and Remark~9.4]{ahr:hcsv} for the relationship between the original and relative Humphreys conjectures.

\item In~\cite{ah:cot1}, the authors proposed a \emph{scheme-theoretic Humphreys conjecture}, asserting that the scheme-theoretic support of $\fS_\lambda$ is reduced, and they proved this conjecture for $\mbf{G} = \mathrm{GL}_n$. 
\end{enumerate}
For general $\mbf{G}$ and $p > h$, the original and scheme-theoretic Humphreys conjectures  both remain open.

\subsection{Silting complexes on the nilpotent cone}

The main geometric arguments in the paper involve only $G$ and $\cN$ (and not $\mbf{G}$), and are valid under much milder assumptions the characteristic $p$ of $\bk$.  From now on, we assume only that $p$ is ``pretty good'' for $G$ (see Section~\ref{sec:prelim}).

A \emph{silting subcategory} of a triangulated category is an additive subcategory whose objects enjoy certain strong $\Ext$-vanishing conditions.  In~\cite{ah:cot1}, the authors showed that the category of direct sums of objects of the form $\fS_\lambda[n]\la -n\ra$ is a silting subcategory of $\Db\CohGGm(\cN)$, called the \emph{supportive silting subcategory}.  

In this paper, we construct a new silting subcategory of $\Db\CohGGm(\cN)$, which we call the \emph{orbitwise silting subcategory}, because it has a geometric description that proceeds ``one nilpotent orbit at a time.''   This description involves the notion of a tilting vector bundle on a nilpotent orbit, defined in Section~\ref{ss:cot-nilp} below.  We show that for any nilpotent orbit $C \subset \cN$ and any indecomposable tilting vector bundle $\cT \in \Tilt(\CohGGm(C))$, there is a unique way to extend $\cT$ to an indecomposable object
\[
\cS(C,\cT) \in \Db\CohGGm(\cN)
\]
that is supported on $\overline{C}$ and satisfies appropriate $\Ext$-vanishing conditions with tilting vector bundles on smaller orbits.  Objects of the form $\cS(C,\cT)[n]\la -n\ra$ are precisely the indecomposable objects in the orbitwise silting subcategory.

The main geometric theorem of the paper states that the supportive and orbitwise silting subcategories actually coincide.  Thus, for each pair $(C,\cT)$ as above, there is a unique dominant weight $\lambda$ such that
\[
\cS(C,\cT) \cong \fS_\lambda.
\]
We also prove that this correspondence is given by the \emph{Lusztig--Vogan bijection}.

If we now assume that $p > h$, then we have
\[
H^\bullet(\mbf{G}_1, \tilt(w_\lambda \cdot 0)) \cong R^\bullet\Gamma(\cN, \cS(C,\cT)).
\]
The relative Humphreys conjecture is essentially a corollary of this formula.

\subsection{Application to the \texorpdfstring{$p$}{p}-canonical basis}

The silting complexes introduced in this paper can be thought of as a coherent counterpart to the $p$-canonical basis of the affine Hecke algebra~\cite{jw} and the theory of parity sheaves~\cite{jmw:ps}, both of which play prominent roles in recent developments in modular representation theory.  These parallels are summarized in the following table.

\medskip
\begin{center}
\small
\begin{tabular}{l|c|c|c}
& \it bases for the &  \it constructible sheaves & \it coherent sheaves on \\
& \it affine Hecke algebra & \it on flag varieties & \it the nilpotent cone \\
\hline
char.~$0$ & Kazhdan--Lusztig basis & perverse sheaves & perverse-coherent sheaves \\
\hline
char.~$p$ & $p$-canonical basis & parity sheaves & silting complexes
\end{tabular}
\end{center}
\medskip

In fact, one can make a more precise statement: it turns out that the Grothen\-dieck group $K(\Db\CohGGm(\cN))$ is naturally a quotient of the affine Hecke algebra, and this quotient map sends the Kazhdan--Lusztig basis to the basis of simple perverse-coherent sheaves.  We will see at the end of this paper that this map also sends the $p$-canonical basis to the basis of silting complexes.  This observation implies a certain positivity property for the $p$-canonical basis, and it suggests conjectural avenues for further study around the themes of ($p$-)Kazhdan--Lusztig cells, truncated convolution of perverse sheaves, and vector bundles on nilpotent orbits.

\subsection{Notation and terminology}

If $V = \bigoplus_{j \in \Z} V_j$ is a graded vector space, we define $V\la n\ra$ to be the graded vector space given by $(V\la n\ra)_j = V_{n+j}$.  Equivalently, if we think of $V$ as a $\Gm$-representation, then $V\la n\ra = V \otimes \bk_{-n}$, where $\bk_{-n}$ is the $1$-dimensional representation where $\Gm$ acts with weight $-n$.  Similar notation is used for coherent sheaves.

In this paper, as in its predecessor~\cite{ah:cot1}, we use the term \emph{silting object} to mean any object in a silting subcategory, not just a generator.  See~\cite[Remark~2.2]{ah:cot1} for context.  The notion of a silting subcategory is equivalent to that of a bounded co-$t$-structure, and we mainly use the latter notion in this paper.  See~\cite[\S2]{ah:cot1} for additional background and references on co-$t$-structures.

\subsection{Contents of the paper}

We begin in Section~\ref{sec:gen-co-t} with a ``toy example'' (needed later in the paper) of a co-$t$-structure on representations of certain nonreductive groups.  Section~\ref{sec:prelim} contains preliminaries and notation related to the nilpotent cone, and Sections~\ref{sec:lie-extend}, \ref{sec:nil-extend}, and~\ref{sec:nilcone} are devoted to constructing co-$t$-structures on coherent sheaves in increasingly difficult settings, culminating with the orbitwise co-$t$-structure on $\Db\CohGGm(\cN)$, obtained in Theorem~\ref{thm:orbitwise}.  In Section~\ref{sec:lv}, we prove that this co-$t$-structure coincides with the supportive co-$t$-structure from~\cite{ah:cot1}, and we describe the combinatorics of the relationship between the two.  Section~\ref{sec:humphreys} contains the proof of the relative Humphreys conjecture, and Section~\ref{sec:pcanonical} discusses potential applications to the study of the $p$-canonical basis.  Finally, Appendix~\ref{sec:duality} contains a technical lemma on co-$t$-structures that is needed in Section~\ref{sec:lv}.

\section{Group representations}
\label{sec:gen-co-t}

For an algebraic group $H$ over an algebraically closed field $\bk$, let $\Rep(H)$ be the category of finite-dimensional algebraic representations.  If $H$ is connected and reductive, then (as observed in, say,~\cite[Remark~2.6]{ah:cot1}), the category $\Tilt(H) \subset \Rep(H)$ is the coheart of a co-$t$-structure.  If $H$ is disconnected, and if the characteristic of $\bk$ does not divide $|H/H^\circ|$, then $\Rep(H)$ is again a highest-weight category (see~\cite[Theorem~3.7]{ahr:rdg}), and $\Tilt(H)$ is again the coheart of a co-$t$-structure.  The goal of this section is to extend these observations to certain nonreductive groups.  

From now on, let $H$ be a (possibly disconnected) algebraic group over $\bk$, which is equipped with a Levi decomposition $H = H_\red \ltimes H_\unip$, where $H_\unip$ is a connected unipotent group, and $H_\red$ is a possibly disconnected group whose identity component $H_\red^\circ$ is reductive.  Suppose we are given an action of $\Gm$ on $H$ by group automorphisms, so that it makes sense to form the group $\Gm \ltimes H$.  We impose the following assumptions:
\begin{enumerate}
\item The characteristic of $\bk$ does not divide $|H_\red/H_\red^\circ|$.
\item $\Gm$ acts trivially on $H_\red$.
\item The induced action of $\Gm$ on $\Lie(H_\unip)$ has strictly positive
weights.
\end{enumerate}
We will construct a co-$t$-structure on $\Rep(\Gm \ltimes H)$.  Thanks to the first assumption, the highest-weight theory of~\cite[Theorem~3.7]{ahr:rdg} is available for $H_\red$.  Let $\Irr(H_\red)$ be the set of isomorphism classes of irreducible $H_\red$-modules.  (An explicit parametrization of this set in terms of weights for $H_\red^\circ$ is given in~\cite[Theorem~2.16]{ahr:rdg}.)  For each $\omega \in \Irr(H_\red)$, let $\weyl_\omega$, $\coweyl_\omega$, and $\tilt_\omega$ denote the corresponding standard, costandard, and indecomposable tilting module, respectively.

\begin{lem}\label{lem:tilting-silt}
Let $\omega, \upsilon \in \Irr(H_\red)$, and regard $\weyl_\omega$ and $\coweyl_\upsilon$ as $\Gm \ltimes H$-modules with trivial $\Gm$-action.  In $\Db\Rep(\Gm \ltimes H)$, we have $\Hom(\weyl_\omega, \coweyl_\upsilon[n]\la k\ra) = 0$ whenever $n + k > 0$.
\end{lem}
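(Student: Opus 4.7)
The plan is to reduce the computation to a weight bound on the rational cohomology $H^\bullet(H_\unip, \bk)$, via the Lyndon--Hochschild--Serre spectral sequence and the highest-weight structure on $\Rep(H_\red)$. Since $\Gm$ acts trivially on $H_\red$ we have $\Gm \ltimes H_\red = \Gm \times H_\red$, and there is a short exact sequence of algebraic groups
\[
1 \to H_\unip \to \Gm \ltimes H \to \Gm \times H_\red \to 1.
\]
Both $\weyl_\omega$ and $\coweyl_\upsilon$ are inflated from $H_\red$, hence have trivial $H_\unip$-action; the Grothendieck spectral sequence associated to the factorization $\Hom_{\Gm \ltimes H}(\weyl_\omega, -) = \Hom_{\Gm \times H_\red}(\weyl_\omega, -) \circ (-)^{H_\unip}$ therefore reads
\[
E_2^{p,q} = \Ext^p_{\Gm \times H_\red}\bigl(\weyl_\omega,\, H^q(H_\unip, \bk) \otimes \coweyl_\upsilon \la k\ra\bigr) \Rightarrow \Ext^{p+q}_{\Gm \ltimes H}(\weyl_\omega, \coweyl_\upsilon \la k\ra).
\]

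Next I would use the $\Gm$-weight decomposition. Since $\weyl_\omega$ is concentrated in $\Gm$-weight~$0$, any Ext group into a $\Gm \times H_\red$-module $M$ equals $\Ext^p_{H_\red}(\weyl_\omega, M_0)$, and a direct calculation gives
\[
\bigl(H^q(H_\unip, \bk) \otimes \coweyl_\upsilon\la k\ra\bigr)_0 \cong H^q(H_\unip, \bk)_k \otimes \coweyl_\upsilon,
\]
a direct sum of copies of the costandard $\coweyl_\upsilon$. The Ext-vanishing between standards and costandards in the highest-weight category $\Rep(H_\red)$ (see~\cite[Theorem~3.7]{ahr:rdg}) then forces $E_2^{p,q} = 0$ for $p > 0$, so the spectral sequence collapses onto the $p = 0$ line and
\[
\Ext^n_{\Gm \ltimes H}(\weyl_\omega, \coweyl_\upsilon\la k\ra) \cong H^n(H_\unip, \bk)_k \cdot \delta_{\omega,\upsilon}.
\]

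The heart of the proof, and the main obstacle, is to show that every $\Gm$-weight of $H^q(H_\unip, \bk)$ is at most $-q$ for $q \ge 1$. I would argue this using the normalized Hochschild cochain complex $\bar{C}^\bullet(H_\unip, \bk) = \ker(\epsilon)^{\otimes \bullet}$, where $\epsilon: \bk[H_\unip] \to \bk$ is the augmentation at the identity. Because $\Gm$ acts on $\Lie(H_\unip)$ with strictly positive integer weights, the cotangent space $\Lie(H_\unip)^* \cong \ker(\epsilon)/\ker(\epsilon)^2$ sits in $\Gm$-weights $\le -1$, and filtering $\ker(\epsilon)$ by its powers shows that all of $\ker(\epsilon)$ sits in $\Gm$-weights $\le -1$. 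Hence $\bar{C}^q$ has $\Gm$-weights $\le -q$, and the same holds for its subquotient $H^q(H_\unip, \bk)$. Combined with $H^0(H_\unip, \bk) = \bk$ in weight~$0$, the Ext computed above is nonzero only when $n = k = 0$ or when $n \ge 1$ and $k \le -n$; in either case $n + k \le 0$, which is the contrapositive of the claim.
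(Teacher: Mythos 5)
The decisive step in your argument is the identification of the $E_2$-page, and it contains a genuine gap: you treat $H^q(H_\unip,\bk)_k$ as a multiplicity space, asserting that $H^q(H_\unip,\bk)_k \otimes \coweyl_\upsilon$ is ``a direct sum of copies of the costandard $\coweyl_\upsilon$.'' But $H_\red$ acts on $H_\unip$ by conjugation (the semidirect product $H = H_\red \ltimes H_\unip$ is not a direct product in general), so $\bk[H_\unip]$ and hence each $H^q(H_\unip,\bk)_k$ carries a generally nontrivial $H_\red$-module structure; in the paper's intended application $H = G^{x_C}$ this action is certainly nontrivial. Consequently $E_2^{p,q} = \Ext^p_{H_\red}(\weyl_\omega, H^q(H_\unip,\bk)_k \otimes \coweyl_\upsilon)$ does not vanish for $p>0$ by standard/costandard orthogonality; that vanishing would require knowing that $H^q(H_\unip,\bk)$ has a good filtration as an $H_\red$-module, which is neither a ``direct calculation'' nor available in the generality of assumptions (1)--(3). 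And the weight bound you prove (weights of $H^q(H_\unip,\bk)$ at most $-q$, which is correct, modulo invoking $\bigcap_j(\ker\epsilon)^j=0$ to make the filtration argument conclusive) cannot rescue the argument by itself: when $n+k>0$ the weight bound only kills the terms with $q > -k$, so every potentially surviving term on the line $p+q=n$ has $p \ge n+k \ge 1$, and these are exactly the terms your unjustified claim was supposed to eliminate. Your closing formula $\Ext^n_{\Gm\ltimes H}(\weyl_\omega,\coweyl_\upsilon\la k\ra) \cong H^n(H_\unip,\bk)_k\cdot\delta_{\omega,\upsilon}$ is likewise false in general, for the same reason.

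Note how the paper's proof sidesteps this issue entirely: instead of splitting off $H_\unip$ by a Lyndon--Hochschild--Serre spectral sequence, it works with the Borel subgroup $\tilde B = B \ltimes H_\unip$ of $H$, builds an injective resolution of the trivial $\tilde B$-module whose terms satisfy a weight bound against the auxiliary cocharacter $\tilde\rho(z) = (z^{-m},\rho(z))$, and adapts the arguments of \cite[II.4.9--II.4.10]{jan:rag}. The resulting inequality $n+k \le -(m-1)k - \la\rho,\upsilon\ra$ uses the \emph{dominance} of $\upsilon$ in an essential way, which is the ingredient your approach never exploits; this is precisely what replaces the (unavailable) good-filtration statement for $H^\bullet(H_\unip,\bk)$ as an $H_\red$-module. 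To repair your argument along its current lines you would have to prove such a good-filtration property, which is a substantially harder statement than the lemma itself.
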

\begin{proof}
If $H$ is connected and $H_\unip$ is trivial (i.e., if $\Gm \ltimes H$ is a connected reductive group), then this is a standard result in the representation theory of reductive groups: see, for instance,~\cite[\S\S II.4.9--II.4.13]{jan:rag}.  That proof can be adapted to the case where $H_\unip$ is nontrivial as well.  We briefly indicate the main steps below.

Assume for now that $H$ is connected.  Choose a maximal torus and a Borel subgroup $T \subset B \subset H_\red$.  Then $\Gm \times T$ is a maximal torus of $\Gm \ltimes H$.  Let $\Phi$ be the root system of $H_\red$, and let $\Phi^+$ be the set of positive roots corresponding to the \emph{opposite} of $B$.  Let $\tilde B = B \ltimes H_\unip$; this is a Borel subgroup of $H$.  

Choose a cocharacter $\rho: \Gm \to T$ such that $\la \alpha, \rho \ra > 0$ for all $\alpha \in \Phi^+$.  Next, choose a positive integer $m$ such that for every $T$-weight $\beta$ on $\Lie(H_\unip)$, we have $\la \beta,\rho\ra < m$, and then let $\tilde\rho: \Gm \to \Gm \times T$ be the map $\tilde\rho(z) = (z^{-m}, \rho(z))$.  Then the pairing of $\tilde\rho$ with every $\Gm \times T$-weight on $\Lie(\tilde B)$ is strictly negative.

By a minor variant on the proof of~\cite[Lemma~II.4.9]{jan:rag}, one can show that the trivial $\tilde B$-module $\bk$ admits an injective resolution
\[
0 \to \bk \to I^0 \to I^1 \to \cdots
\]
such that for any $\Gm \times T$-weight $\gamma$ occurring in $I^n$, we have $\la \tilde\rho,\gamma \ra \ge n$.  Now let $\upsilon$ be a dominant weight for $H_\red$.  Using the injective resolution above, the proof of~\cite[Proposition~II.4.10(b)]{jan:rag} shows that
\[
\Ext^n_{\tilde B}(\bk, \bk_\upsilon\la k\ra) \ne 0
\qquad\text{implies}\qquad
-mk -\la \rho,\upsilon\ra \ge n,
\]
or equivalently, $n+k \le -(m-1)k - \la \rho,\upsilon\ra$.  Since $\upsilon$ is dominant, this in turn implies that $n+k \le 0$.  In other words, if $n + k > 0$, then the group
\[
\Ext^n_{\tilde B}(\bk, \bk_\upsilon\la k\ra)
\cong \Ext^n_H(\bk, \mathrm{ind}_{\tilde B}^H \bk_\upsilon \la k\ra) = \Hom(\bk, \coweyl_\upsilon[n]\la k\ra
\]
vanishes.  This immediately implies, more generally, that if $N$ is any $H_\red$-rep\-re\-sen\-ta\-tion with a good filtration, then $\Hom(\bk,\coweyl[n]\la k\ra) = 0$ whenever $n + k > 0$.  In particular, we have
\[
\Hom(\weyl_\omega, \coweyl_\upsilon[n]\la k\ra) \cong \Hom(\bk, \weyl_\omega^* \otimes \coweyl_\upsilon[n]\la k\ra) = 0.
\]
We have completed the proof in the case where $H$ is connected.

If $H$ is disconnected, then, thanks to our assumption that the characteristic of $\bk$ does not divide the order of $H/H^\circ$, we have a natural isomorphism
\begin{equation}\label{eqn:disconn-ext}
\Hom_H(\weyl_\omega, \coweyl_\upsilon[n]\la k\ra) \cong \big(\Hom_{H^\circ}(\weyl_\omega, \coweyl_\upsilon[n]\la k\ra)\big)^{H/H^\circ}.
\end{equation}
(See, for instance,~\cite[Lemma~2.18]{ahr:rdg} for more explanation.)  As an $H^\circ_\red$-module, $\weyl_\omega$ (resp.~$\coweyl_\upsilon$) is a direct sum of Weyl (resp.~dual Weyl) modules, so the right-hand side vanishes by the case of connected groups considered above.
\end{proof}

\begin{prop}\label{prop:tilting-silt}
There is a unique co-$t$-structure on $\Db\Rep(\Gm \ltimes H)$ whose indecomposable silting objects are precisely the objects of the form $\tilt_\omega[n]\la -n\ra$ for $\omega \in \Irr(H_\red)$ and $n \in \Z$.
\end{prop}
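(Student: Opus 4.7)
The plan is to produce the co-$t$-structure via the standard correspondence between silting subcategories and bounded co-$t$-structures (cf.~\cite[\S2]{ah:cot1}). It suffices to verify that the set $\cS = \{\tilt_\omega[n]\la -n\ra : \omega \in \Irr(H_\red),\ n \in \Z\}$ consists of pairwise Ext-orthogonal objects in positive degrees, and that $\cS$ triangulated-generates $\Db\Rep(\Gm \ltimes H)$. Uniqueness of the co-$t$-structure is then automatic, and the identification of the indecomposable silting objects follows from Krull--Schmidt in $\Rep(\Gm \ltimes H)$.

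First I would promote Lemma~\ref{lem:tilting-silt} from the $(\weyl, \coweyl)$-pair to the $(\tilt, \tilt)$-pair. Using a $\coweyl$-filtration of $\tilt_\upsilon$ and the long exact $\Hom$-sequence, the vanishing of $\Hom(\tilt_\omega, \tilt_\upsilon[n]\la k\ra)$ for $n+k > 0$ reduces to $\Hom(\tilt_\omega, \coweyl_\alpha[n]\la k\ra) = 0$ in the same range; a second reduction via a $\weyl$-filtration of $\tilt_\omega$ then brings us back to Lemma~\ref{lem:tilting-silt}. The silting condition on $\cS$ now follows by juggling shifts and twists: for $j > 0$,
\[
\Hom(\tilt_\omega[n]\la -n\ra,\ \tilt_\upsilon[m]\la -m\ra[j]) \cong \Hom(\tilt_\omega,\ \tilt_\upsilon[m-n+j]\la n-m\ra),
\]
and the sum of the cohomological shift and the internal degree is $(m-n+j)+(n-m) = j > 0$, so this vanishes.

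For generation, I would exploit that $H_\unip$ is a connected unipotent group and hence has only the trivial irreducible representation; combined with assumption~(ii), every irreducible $V \in \Rep(\Gm \ltimes H)$ factors through the projection $\Gm \ltimes H \twoheadrightarrow \Gm \times H_\red$ and is therefore of the form $\irr_\omega \la k\ra$ for some $\omega \in \Irr(H_\red)$ and $k \in \Z$. Thanks to assumption~(i), the highest-weight structure of~\cite[Theorem~3.7]{ahr:rdg} is available on $\Rep(H_\red)$, and within a finite order-ideal of the weight poset containing $\omega$ (effectively, a finite-dimensional quasi-hereditary quotient) one obtains a finite tilting resolution of $\irr_\omega$. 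Pulling this back along the projection to $H_\red$ and twisting by $\la k\ra$ yields a resolution of $\irr_\omega \la k\ra$ whose terms lie in the additive closure of $\cS$. Since the irreducibles generate $\Rep(\Gm \ltimes H)$ via composition series, $\cS$ triangulated-generates $\Db\Rep(\Gm \ltimes H)$, and the proposition follows.

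The main obstacle I foresee is the generation step: the weight poset for $H_\red$ is in general infinite, so there is no ``global'' tilting resolution of each $\irr_\omega$. The argument sketched above sidesteps this by truncating to a finite order-ideal separately for each irreducible, but this truncation must be carried out compatibly with the $\Gm$-action and the (trivial) $H_\unip$-action in order to produce a resolution in $\Rep(\Gm \ltimes H)$ rather than merely in $\Rep(H_\red)$.
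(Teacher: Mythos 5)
Your proposal is correct and follows essentially the same route as the paper: verify that the additive hull of the objects $\tilt_\omega[n]\la -n\ra$ is a silting subcategory (positive-degree $\Hom$-vanishing deduced from Lemma~\ref{lem:tilting-silt} via standard/costandard filtrations of tilting modules, plus triangulated generation) and then invoke the silting-subcategory/co-$t$-structure correspondence of~\cite[Proposition~2.5]{ah:cot1}. The only difference is that you spell out the generation step (reduction to irreducibles $\irr_\omega\la k\ra$ and truncation to finite order ideals), which the paper asserts without detail; your worry about compatibility with the $\Gm$- and $H_\unip$-actions is harmless, since the resolution is pulled back along the exact functor $\Rep(\Gm\times H_\red)\to\Rep(\Gm\ltimes H)$.
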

\begin{proof}
Let $\fS \subset \Db\Rep(\Gm \ltimes H)$ be the full additive subcategory consisting of direct sums of objects of the form $\tilt_\omega[n]\la -n\ra$.  It is immediate from Lemma~\ref{lem:tilting-silt} that for $S, S' \in \fS$, we have $\Hom(S,S'[k]) = 0$ for all $k > 0$.  Since $\fS$ generates $\Db\Rep(\Gm \ltimes H)$ as a triangulated category, it is a silting subcategory, and hence the coheart of a unique co-$t$-structure: cf.~\cite[Proposition~2.5]{ah:cot1}.
\end{proof}

We denote the coheart of the co-$t$-structure from Proposition~\ref{prop:tilting-silt} by
\[
\Silt(\Gm \ltimes H) := \Db\Rep(\Gm \ltimes H)_{\ge 0} \cap \Db\Rep(\Gm \ltimes H)_{\le 0}.
\]

\begin{rmk}\label{rmk:weyl-cot}
With a little bit of extra work, one can show that the co-$t$-structure from Proposition~\ref{prop:tilting-silt} has the following description:
\[
\begin{aligned}
\Db\Rep(\Gm \ltimes H)_{\ge 0} &=
\begin{array}{@{}c@{}}
\text{the full subcategory of $\Db\Rep(\Gm\ltimes H)$ generated} \\
\text{under extensions by $\weyl_\omega[n]\la k \ra$ with $n + k \le 0$}
\end{array} \\
\Db\Rep(\Gm \ltimes H)_{\le 0} &=
\begin{array}{@{}c@{}}
\text{the full subcategory of $\Db\Rep(\Gm\ltimes H)$ generated} \\
\text{under extensions by $\coweyl_\omega[n]\la k \ra$ with $n + k \ge 0$}
\end{array}
\end{aligned}
\]
\end{rmk}

We will need a lemma about the co-$t$-structure defined above in terms of the following notion.

\begin{defn}
For a module $N$ of a (possibly disconnected) reductive group $H_\red$, the \emph{good filtration dimension} of $N$ is defined to be the 
smallest integer $k$ such that for all $j \geq k+1$ and any Weyl module $M$,
$
\Ext_{H_\red}^j(M, N) = 0.
$
\end{defn}

For other characterizations of good filtration dimension, see~\cite[Proposition~3.4]{fp:claag}.  (That paper assumes that $H_\red$ is connected, and imposes some restrictions on the characteristic of $\bk$, depending on the type of $H_\red$, because it was not known at the time that tensor product preserves the property of having a good filtration in full generality.  In fact,~\cite[Proposition~3.4]{fp:claag} holds in general for possibly disconnected reductive groups $H_\red$, as long as the characteristic of $\bk$ does not divide $|H_\red/H_\red^\circ|$.)

\begin{lem}\label{lem:gfd-co-t}
Let $N$ be an $H_\red$-module with good filtration dimension $\leq i$, regarded as a $\Gm \ltimes H$-module with trivial $\Gm$-action. Then, for any $j \in \Z$, we have
\[
N\langle j \rangle \in \Db\Rep(\Gm \ltimes H)_{\leq i-j}
\]
\end{lem}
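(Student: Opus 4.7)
The plan is to realize $N$ as a finite iterated extension in $\Db\Rep(\Gm \ltimes H)$ of shifts of $H$-costandards, and then read off the required containment using the description of $\cD_{\le m}$ suggested by Remark~\ref{rmk:weyl-cot}.

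First, since $N$ has good filtration dimension $\le i$ as an $H_\red$-module, I would construct a finite resolution
\[
0 \to N \to G^0 \to G^1 \to \cdots \to G^i \to 0
\]
in $\Rep(H_\red)$ in which every $G^k$ admits a good filtration. Such a resolution can be produced by truncating an injective resolution of $N$, using that injective $H_\red$-modules have good filtrations under the running hypothesis on $|H_\red/H_\red^\circ|$. Next I would observe that because $H = H_\red \ltimes H_\unip$ and $\tilde B = B \ltimes H_\unip$, the quotient $H \twoheadrightarrow H_\red$ induces an isomorphism $H/\tilde B \simto H_\red/B$. Consequently $\coweyl_\omega = \mathrm{ind}_{\tilde B}^H \bk_\omega$ coincides with the inflation of the corresponding $H_\red$-costandard along $H \twoheadrightarrow H_\red$, so inflating the displayed resolution through $\Gm \ltimes H \twoheadrightarrow H_\red$ (with $\Gm$ and $H_\unip$ acting trivially) yields a resolution of $N$ in $\Rep(\Gm \ltimes H)$ whose intermediate terms all carry good $H$-filtrations and trivial $\Gm$-actions.

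Twisting by $\bk\la j\ra$, I obtain a quasi-isomorphism $N\la j\ra \cong [G^0\la j\ra \to G^1\la j\ra \to \cdots \to G^i\la j\ra]$ in $\Db\Rep(\Gm \ltimes H)$, with the right-hand complex concentrated in degrees $0$ through $i$. The stupid filtration on this complex exhibits $N\la j\ra$ as an iterated extension of the shifted terms $G^k\la j\ra[-k]$ for $k = 0,\dots,i$. Each $G^k\la j\ra[-k]$ is itself built by extensions from objects of the form $\coweyl_\omega[-k]\la j\ra$, and these all satisfy the inequality $(-k) + j \ge -(i-j)$ for $k \le i$. By Remark~\ref{rmk:weyl-cot} combined with the shift identity $\cD_{\le m} = \cD_{\le 0}[-m]$, the category $\cD_{\le i-j}$ is precisely the extension-closure of costandards $\coweyl_\omega[n]\la l\ra$ with $n + l \ge -(i-j)$, so every intermediate subquotient, and hence $N\la j\ra$ itself, lies in $\cD_{\le i-j}$ as claimed.

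The one nontrivial input is the identification of $\coweyl_\omega$ with the inflation of an $H_\red$-costandard, which hinges on $H_\unip$ acting trivially on $H/\tilde B \cong H_\red/B$; once that is in place, the argument reduces to routine bookkeeping with the homological and grading shifts.
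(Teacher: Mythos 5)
Your argument is correct in substance, and it is essentially the paper's proof unrolled: the paper proceeds by induction on the good filtration dimension, embedding $N$ into a single good-filtered module $E$ and passing to the cokernel, which is exactly the step-by-step construction of your coresolution $0 \to N \to G^0 \to \cdots \to G^i \to 0$; your stupid-filtration bookkeeping then plays the role of the paper's rotated triangles, and your shift/twist arithmetic is right (the subquotients $\coweyl_\omega[-k]\la j\ra$ with $0 \le k \le i$ do land in $\Db\Rep(\Gm \ltimes H)_{\le i-j}$ under the conventions in force).

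The one step you should tighten is the final input. You invoke Remark~\ref{rmk:weyl-cot}, but the paper states that remark without proof (``with a little bit of extra work''), and its own proof of Lemma~\ref{lem:gfd-co-t} deliberately avoids it, so citing it as the key lemma leaves a small hole. Fortunately you only need the easy containment, not the equality: after shifting and twisting, it suffices to know that $\coweyl_\omega \in \Db\Rep(\Gm \ltimes H)_{\le 0}$, since the right-hand classes of a co-$t$-structure are closed under extensions. This follows from Lemma~\ref{lem:tilting-silt}: because tilting modules are standard-filtered, $\Hom(\tilt_\upsilon[n]\la k\ra, \coweyl_\omega) = 0$ whenever $n+k \le -1$, and $\Db\Rep(\Gm \ltimes H)_{\le 0}$ is the right orthogonal of $\Db\Rep(\Gm \ltimes H)_{\ge 1}$, which is generated under extensions and direct summands by precisely the objects $\tilt_\upsilon[n]\la k\ra$ with $n+k \le -1$. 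This is the same mechanism the paper uses for its base case $i=0$, so your proof closes up with no new ideas needed. Two smaller remarks: the inflation identification you single out as the nontrivial input is a non-issue here, since in this section $\coweyl_\omega$ is by definition the $H_\red$-costandard inflated along $\Gm \ltimes H \twoheadrightarrow H_\red$ (the comparison with $\mathrm{ind}_{\tilde B}^{H}\bk_\omega$ is only needed inside the proof of Lemma~\ref{lem:tilting-silt}); and when truncating an injective resolution you should pass to finite-dimensional good-filtered submodules of the (infinite-dimensional) injectives, exactly as the paper implicitly does when choosing $E$.
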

\begin{proof}
We proceed by induction on $i$.  If $i = 0$, i.e., if $N$ has a good filtration, then Lemma~\ref{lem:tilting-silt} implies that $\Hom(\tilt_\omega[n+j-1]\la -n\ra, N\la j\ra) = 0$ for any $j$.  In other words, $N\la j\ra \in \Db\Rep(\Gm \ltimes H)_{\leq -j}$.

Now suppose $i \geq 1$, and that the result holds for any module with the good filtration dimension $\leq i-1$. Given $N$ with 
good filtration dimension $\le i$, let $E$ be a module with a good filtration such that there is an embedding $N \hookrightarrow E$ 
(such modules always exist since the rationally injective $H_\red$-modules have good filtrations). This gives a short exact sequence 
\[
0 \to  N \rightarrow E \rightarrow K \rightarrow  0,
\]
where the cokernel $K$ has good filtration dimension $\leq i-1$. Rotating this triangle gives 
\[
K[-1]\la j\ra \rightarrow N\la j\ra \rightarrow E\la j\ra \rightarrow .
\]
By induction, the first term lies in $\Db\Rep(\Gm \ltimes H)_{\le i-j}$, and the last term lies in $\Db\Rep(\Gm \ltimes H)_{\le -j}$, so we must have
 that $M \in \Rep(\Gm \times H)_{\leq i}$ as well. 
\end{proof}

\section{Preliminaries on nilpotent orbits}
\label{sec:prelim}

Let $G$ be a connected reductive group over an algebraically closed field $\bk$, and let $\fg$ be its Lie algebra. We assume throughout the paper that
\[
\text{The characteristic $p$ of $\bk$ is pretty good for $G$.}
\]
For the definition of ``pretty good,'' see~\cite[Definition~2.11]{herpel}.  This condition is equivalent to requiring $G$ to be ``standard'' in the sense of~\cite[\S4]{mt2}.  According to~\cite[Lemma~2.3]{ahr:ies}, \cite[Lemma~2.12]{herpel}, and~\cite[Proposition~12]{mt1}, this assumption implies the following commonly used conditions on $G$:
\begin{enumerate}
\item There exists a separable isogeny $\tilde G \to G$, where the derived subgroup of $\tilde G$ is simply connected.
\item The characteristic of $\bk$ is good for $G$.
\item There exists a nondegenerate $G$-invariant bilinear form on $\fg$.
\end{enumerate}
In this section, we establish notation and review some relevant results about the nilpotent cone $\cN \subset \fg$.

\subsection{Coherent sheaves on the Lie algebra and the nilpotent cone}

If $X$ is any $G \times \Gm$-variety, we denote by $\CohGGm(X)$ the category of $G \times \Gm$-equivariant coherent sheaves on $X$.  If $Z \subset X$ is a $G \times \Gm$-stable closed subset, we denote by
\[
\Db_Z\CohGGm(X) \subset \Db\CohGGm(X)
\]
the full triangulated subcategory of $\Db\CohGGm(X)$ consisting of objects supported set-theoretically on $Z$.

We make $\fg$ into a $G \times \Gm$-variety by letting $\Gm$ act with weight $-2$.  Next, let $\cN \subset \fg$ be the nilpotent cone, and let $C \subset \fg$ be a nilpotent orbit.  Let $\partial C = \overline{C} \smallsetminus C$, and then let
\[
\fg_C = \fg \smallsetminus \partial C,
\qquad
\cN_C = \cN \smallsetminus \partial C.
\]
Thus, $\fg_C$ and $\cN_C$ are open subsets of $\fg$ and $\cN$, respectively, in which $C$ embeds as a closed subvariety.  Let
\[
j_C: C \hookrightarrow \fg_C,
\]
be the inclusion map.  All of these spaces are preserved by the  $\Gm$-action, so we can regard them as $G \times \Gm$-subvarieties of $\fg$.  

Choose a point $x_C \in C$ and an associated cocharacter $\phi_{x_C}: \Gm \to G$.  The stabilizer $G^{x_C}$ admits a Levi decomposition
\[
G^{x_C}= G^{x_C}_\red \ltimes G^{x_C}_\unip,
\]
where $G^{x_C}_\red$ is the centralizer of $\phi_{x_C}$ in $G^{x_C}$.  The group $G^{x_C}_\red$ may be disconnected, but the assumptions from the beginning of the section imply that the characteristic of $\bk$ does not divide $|G^{x_C}_\red/(G^{x_C}_\red)^\circ|$.

Recall that a for a closed (possibly disconnected) reductive subgroup $H$ of $G$, we call $(G,H)$ a \emph{Donkin pair} if for any $G$-module $V$ with a good filtration, 
the restriction $\res^G_H(V)$ has a good filtration for $H$ (see \cite[II.4.22]{jan:rag}).  This condition implies, more generally, that if $V$ has good filtration dimension${}\le i$ for $G$, then $\res^G_H(V)$ has good filtration dimension${}\le i$ for $H$.

The following result from~\cite[Corollary~1.2]{ah:goodfilt} will play a crucial role in Section~\ref{sec:lie-extend}.

\begin{thm}[{\cite{ah:goodfilt}}]\label{thm:donkin}
For $x_C$ as above, the pair $(G, G^{x_C}_\red)$ is a Donkin pair.
\end{thm}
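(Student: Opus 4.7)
The plan is to factor the inclusion $G^{x_C}_\red \hookrightarrow G$ through the Levi subgroup $L := Z_G(\phi_{x_C}(\Gm))$. Since $\phi_{x_C}(\Gm)$ is a subtorus of $G$, its centralizer $L$ is a Levi subgroup of some parabolic. By the very definition of $G^{x_C}_\red$ as the centralizer of $\phi_{x_C}(\Gm)$ in $G^{x_C}$, every element of $G^{x_C}_\red$ commutes with $\phi_{x_C}(\Gm)$, so we obtain a chain $G^{x_C}_\red \subseteq L \subseteq G$; in fact $G^{x_C}_\red = L^{x_C}$, the stabilizer of $x_C$ under the adjoint action of $L$. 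The Donkin pair property is transitive along such chains, so it suffices to establish the two steps $(G, L)$ and $(L, G^{x_C}_\red)$ separately.

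The first step---that $(G, L)$ is a Donkin pair---is the classical result of Donkin on restriction from a reductive group to a Levi subgroup in good characteristic, cf.~\cite[II.4.22]{jan:rag}, which applies because pretty good characteristic implies good characteristic. The main step is therefore to show that $(L, G^{x_C}_\red)$ is a Donkin pair. Here the key structural input is the grading $\fg = \bigoplus_i \fg(i)$ induced by $\phi_{x_C}$: the element $x_C$ lies in the weight-$2$ piece $\fg(2)$, the group $L$ acts on $\fg(2)$, and the Bala--Carter / Pommerening theory of associated cocharacters (valid in good characteristic) guarantees that the $L$-orbit $L \cdot x_C$ is open in $\fg(2)$. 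Consequently the homogeneous space $L/L^{x_C}$ is isomorphic to an open subvariety of the affine space $\fg(2)$.

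The standard criterion then reduces the problem to showing that the coordinate ring $\bk[L/L^{x_C}]$ has a good filtration as an $L$-module. The ambient polynomial ring $\bk[\fg(2)] = \Sym \fg(2)^*$ has a good filtration as an $L$-module by Mathieu's theorem on symmetric powers of good-filtration modules, so the main obstacle is to show that the restriction-to-open-orbit map $\bk[\fg(2)] \to \bk[L \cdot x_C]$ preserves the good-filtration property. I expect the hard part to be a local-cohomology vanishing argument for the complement $\fg(2) \smallsetminus L \cdot x_C$: one needs either a codimension-$\ge 2$ estimate (ensuring that $\bk[L \cdot x_C] = \bk[\fg(2)]$) or a more delicate analysis of the boundary strata. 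The pretty good characteristic hypothesis plays a decisive role, via the existence of a nondegenerate $G$-invariant form on $\fg$ (so that $\fg(2)^*$ is identified with $\fg(-2)$ equivariantly) and the separable isogeny $\tilde G \to G$ with simply connected derived subgroup, both of which are needed to control the boundary loci and to transfer good-filtration statements between isogenous groups.
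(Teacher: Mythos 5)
First, a point of calibration: the paper does not prove this statement at all --- it is quoted from \cite[Corollary~1.2]{ah:goodfilt}, so you are in effect attempting to reprove the main theorem of that reference rather than something established in the present text. Your reductions are sound as far as they go: $G^{x_C}_\red = L^{x_C}$ for $L = Z_G(\phi_{x_C}(\Gm))$, Donkin pairs compose along $G^{x_C}_\red \subseteq L \subseteq G$, the Levi case $(G,L)$ is classical, the openness of $L\cdot x_C$ in $\fg(2)$ holds for an associated cocharacter in good characteristic, and Donkin's criterion (legitimate here since $G^{x_C}_\red$ is reductive, so $L/L^{x_C}$ is affine and induction is exact) reduces everything to showing that $\bk[L/L^{x_C}]$ has a good filtration as an $L$-module.

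But the argument stops exactly where the real difficulty begins, and the step you flag as ``the hard part'' is the entire content of the theorem; neither of your proposed routes works as stated. The codimension-$\ge 2$ hope is false in general: already for $x_C$ regular in $\mathfrak{sl}_2$ the space $\fg(2)$ is a line and the boundary of the open $L$-orbit is the origin, a divisor, so $\bk[L\cdot x_C] \ne \bk[\fg(2)]$; restriction to the open orbit genuinely changes the module, and there is no general principle that passing to an open orbit preserves the good filtration property --- controlling $\bk[L/L^{x_C}]$ is precisely the analysis carried out in \cite{ah:goodfilt}, and ``a more delicate analysis of the boundary strata'' is a placeholder, not an argument. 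A secondary inaccuracy: the good filtration on $\Sym(\fg(2)^*)$ is not given by ``Mathieu's theorem on symmetric powers'' --- Mathieu's theorem concerns tensor products, and symmetric powers of good-filtration modules need not have good filtrations; what you need is the Andersen--Jantzen/Donkin result that $\Sym(\fg^*)$ has a good filtration under the standing hypotheses (the same input as \cite[Proposition~4.4]{aj}, used in Lemma~\ref{lem:wedge-adjoint}), restricted along the Donkin pair $(G,L)$, together with the observation that $\Sym(\fg(2)^*)$ is an $L$-module direct summand of $\Sym(\fg^*)$. That part is repairable; the main gap is not, so the proposal is a reasonable reduction but not a proof of the theorem.
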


\subsection{The co-\texorpdfstring{$t$}{t}-structure on a nilpotent orbit}
\label{ss:cot-nilp}

We have an isomorphism
\begin{equation}\label{eqn:semi-split}
\Gm \ltimes G^{x_C} \simto (G \times \Gm)^{x_C}
\qquad\text{given by}\qquad
t \ltimes g \mapsto (\phi_{x_C}(t)g, t),
\end{equation}
where the semidirect product $\Gm \ltimes G^{x_C}$ is defined by having $\Gm$ act by $z \cdot g = \phi_{x_C}(z)g\phi_{x_C}(z)^{-1}$.  This action is trivial on $G^{x_C}_\red$ and has strictly positive weights on $\Lie(G^{x_C}_\unip)$.  

The action of $G \times \Gm$ on $C$ induces an isomorphism $(G \times \Gm)/(G \times \Gm)^{x_C} \cong C$ (see~\cite[\S2.9]{jan:nort}).  It follows that there are equivalences of categories
\begin{equation}\label{eqn:orbit-equiv}
\CohGGm(C) \cong \Rep((G \times \Gm)^{x_C}) \cong \Rep(\Gm \ltimes G^{x_C}),
\end{equation}

Via these equivalences, we can transfer results from Section~\ref{sec:gen-co-t} to $\CohGGm(C)$.  In particular, Proposition~\ref{prop:tilting-silt} gives us a co-$t$-structure on $\Db\CohGGm(C)$ whose coheart is denoted by
\begin{equation}\label{eqn:siltc-defn}
\Silt(C) = \Db\CohGGm(C)_{\ge 0} \cap \Db\CohGGm(C)_{\le 0}.
\end{equation}

Let us introduce some notation to label the indecomposable objects in $\Silt(C)$.  An object $\cT \in \CohGGm(C)$ is called a \emph{tilting vector bundle} if it corresponds under~\eqref{eqn:orbit-equiv} to a tilting module for $\Gm \times G^{x_C}_\red$ (with trivial action of $G^{x_C}_\unip$).  
Let
\[
\Omega_C = \text{the set of isomorphism classes of irreducible $G^{x_C}_\red$-representations,}
\]
and for $\omega \in \Omega_C$, let $\cT_\omega \in \CohGGm(C)$ be the corresponding indecomposable tilting vector bundle.  Then the indecomposable objects in $\Silt(C)$ are precisely those of the form 
$\cT_\omega[n]\la -n\ra$ for $\omega \in \Omega_C$ and $n \in \Z$.


\subsection{Serre--Grothendieck duality}

The Serre--Grothendieck duality functor on $\cN$ is the functor
\[
\D = \D_\cN: \Db\CohGGm(\cN) \to \Db\CohGGm(\cN)
\quad\text{given by}
\quad
\D_\cN = \cRHom({-},\cO_\cN).
\]
This definition involves a choice; see~\cite[\S 3.2]{ah:pgl3} for a discussion.  Other variants of these functors that we will need include
\begin{align*}
\D_C = \cRHom({-},\cO_C[-\codim C]\la \codim C\ra) &: \Db\CohGGm(C) \to \Db\CohGGm(C), \\
\D_\fg = \cRHom({-},\cO_\fg[\rank G]\la -\rank G\ra) &: \Db\CohGGm(\fg) \to \Db\CohGGm(\fg).
\end{align*}
Here, $C$ is a nilpotent orbit, and $\codim C$ is defined to be its codimension in $\cN$.

For compatibilities among these functors, let $j: \cN \hookrightarrow \fg$ and $i_C: C \hookrightarrow \cN$ be the inclusion maps.  Then we have
\[
j^!\cO_\fg[\rank G]\la -\rank G\ra \cong \cO_\cN,
\qquad
i_C^!\cO_\cN \cong \cO_C[-\codim C]\la \codim C\ra.
\]
(For a proof of the latter, see~\cite[Corollary~2.5]{ah:pgl3}; very similar reasoning yields the former isomorphism as well.)  It follows that
\begin{equation}\label{eqn:serre-groth-compat}
\D \circ j_* \cong j_* \circ \D
\qquad\text{and}\qquad
j^* \circ \D \cong \D \circ j^!.
\end{equation}
One can also define $\D$ on open subsets of $\fg$ or $\cN$. For instance, on $\fg_C$, we have
\begin{equation}\label{eqn:serre-groth-open}
\D \circ j_{C*} \cong j_{C*} \circ \D
\qquad\text{and}\qquad
j_C^* \circ \D \cong \D \circ j_C^!.
\end{equation}
Now let $U \subset \cN$ let a $G$-stable open subset, and let $C$ be a nilpotent orbit that is closed in $U$.  If we let $i_C: C \hookrightarrow U$ be the inclusion map, then
\begin{equation}\label{eqn:serre-groth-orbit}
\D \circ i_{C*} \cong i_{C*} \circ \D.
\end{equation}
There are also statements involving $i_C^*$ and $i_C^!$, but because $U$ is usually not smooth, these functors require working in $D^-\CohGGm(U)$ or $D^+\CohGGm(U)$, rather than $\Db\CohGGm(U)$.  We will mostly avoid unbounded derived categories in this paper.

\subsection{Opposition}

Recall that an \emph{opposition} is an involutive automorphism $\sigma: G \to G$ that preserves some maximal torus $T \subset G$ and satisfies $\sigma(t) = t^{-1}$ for all $t \in T$.  The existence of an opposition follows from~\cite[II.1.16]{jan:rag}.  Given a $G$-module $V$, let $V^\sigma$ denote the representation obtained by twisting the $G$-action by $\sigma$.  See~\cite[\S 4]{ah:pgl3} for a discussion of how to extend this construction to a functor
\[
({-})^\sigma: \Db\CohGGm(\cN) \to \Db\CohGGm(\cN).
\]
According to~\cite[Corollary~4.2]{ah:pgl3}, this functor preserves supports.    
Moreover, its action on coherent sheaves on an orbit can be described explicitly described using~\cite[Lemma~4.3]{ah:pgl3}, which yields for each nilpotent orbit $C \subset \cN$ an involutive automorphism
\[
\id \times \sigma_C: \Gm \ltimes G^{x_C} \to \Gm \ltimes G^{x_C}.
\]
Let $\cF \in \CohGGm(\cN)$ be such that $\cF|_{\cN_C}$ is supported scheme-theoretically on $C$.  Thus, we can regard $\cF|_C$ as an object of $\Rep(\Gm \ltimes G^{x_C})$ via~\eqref{eqn:orbit-equiv}.  Then~\cite[Lemma~4.3]{ah:pgl3} implies that
\begin{equation}\label{eqn:sigma-orbit}
(\cF^\sigma)|_C \cong (\cF|_C)^{\id \times \sigma_C}.
\end{equation}

\begin{lem}\label{lem:tilting-oppo}
Let $T$ be a tilting $G^{x_C}_\red$-module, regarded as a $\Gm \ltimes G^{x_C}$-module with trivial $\Gm$-action.  Then $T^{\id \times \sigma_C} \cong T^*$.
\end{lem}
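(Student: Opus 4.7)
The plan is to reduce the problem to a character computation: both $T^{\id \times \sigma_C}$ and $T^*$ should be tilting $G^{x_C}_\red$-modules whose characters on a maximal torus coincide, and hence be isomorphic by the classification of tilting modules for possibly disconnected reductive groups in~\cite[Theorem~3.7]{ahr:rdg}.

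First I would examine the restriction of $\sigma_C$ to $G^{x_C}_\red$. Arranging the choice of $x_C$ so that the associated cocharacter $\phi_{x_C}$ lies in a $\sigma$-stable maximal torus $T \subset G$, one should see from the construction in~\cite[Lemma~4.3]{ah:pgl3} that $\sigma_C$ is obtained from $\sigma|_{G^{x_C}}$ by composition with an inner automorphism, and that there is a maximal torus $S \subset G^{x_C}_\red$ (contained in $T$) on which $\sigma_C$ acts by inversion. Thus $\sigma_C|_{G^{x_C}_\red}$ will be an opposition on the (possibly disconnected) reductive group $G^{x_C}_\red$ in the sense reviewed above.

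Next I would prove the general statement: if $H$ is a reductive group satisfying the hypotheses of~\cite[Theorem~3.7]{ahr:rdg}, $\tau$ is an opposition of $H$, and $V$ is a tilting $H$-module, then $V^\tau \cong V^*$. Both are tilting modules, and on a maximal torus they have the same character:
\[
\operatorname{ch}(V^\tau)(s) = \operatorname{ch}(V)(\tau(s)) = \operatorname{ch}(V)(s^{-1}) = \operatorname{ch}(V^*)(s).
\]
For connected $H$ this already implies $V^\tau \cong V^*$ by highest-weight theory. For disconnected $H$, one must further match the action of the component group, which reduces to an analogous character computation on each connected component together with the extension data recorded by $\Irr(H_\red)$. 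Invoking~\cite[Theorem~3.7]{ahr:rdg}, we obtain $V^\tau \cong V^*$.

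Finally, since $T$ has trivial action of both $\Gm$ and $G^{x_C}_\unip$, the twist $T^{\id \times \sigma_C}$ depends only on the restriction of $\sigma_C$ to $G^{x_C}_\red$, so combining the two steps above yields the desired isomorphism. The main obstacle I anticipate is verifying that $\sigma_C|_{G^{x_C}_\red}$ really is an opposition: this requires a careful compatibility between the opposition automorphism of $G$ and the Levi decomposition of the centralizer $G^{x_C}$, and in particular arranging that $\phi_{x_C}$ may be chosen to lie in a $\sigma$-stable maximal torus of $G$, which in turn relies on the standardness hypothesis on $G$ and the classification of associated cocharacters up to conjugacy.
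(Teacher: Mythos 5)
There is a genuine gap, and it sits exactly where you flag your ``main obstacle.'' Your whole reduction rests on the claim that $\sigma_C|_{G^{x_C}_\red}$ is an opposition of the (possibly disconnected) reductive group $G^{x_C}_\red$, i.e.\ that it preserves and inverts a maximal torus $S \subset G^{x_C}_\red$. This does not follow from the construction in~\cite[Lemma~4.3]{ah:pgl3} in any obvious way: $\sigma_C$ is obtained from $\sigma$ by composing with an inner automorphism chosen so as to fix $x_C$, and that correcting element need not normalize $S$, let alone leave the inversion property intact (one can arrange $\sigma(S)=S$ with $\sigma|_S$ inversion by choosing $T \supset S$, but then $d\sigma(x_C)$ is only $G$-conjugate to $x_C$, and moving it back to $x_C$ may involve an element that acts on $S$ through a nontrivial ``Weyl-type'' symmetry, so the composite acts on $S$ by $-w$ rather than $-1$). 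Making this work requires a genuine argument (compatibility with the Levi $C_G(S)$, Bala--Carter-type considerations), which you have not supplied; it is essentially the content that~\cite[Theorem~4.5]{ah:pgl3} packages at the level of irreducible vector bundles.

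Even granting that step, the disconnected case is not handled by your character computation. For a disconnected $G^{x_C}_\red$, the character on a maximal torus does not determine a tilting module: by~\cite[Theorem~2.16]{ahr:rdg} the irreducibles (hence the tilting objects) are indexed by a weight orbit \emph{together with} an irreducible representation of a stabilizer in the component group, and an automorphism that inverts a maximal torus carries no information about how it acts on that extra component-group datum; trace functions on non-identity components in characteristic $p$ are also delicate. This is precisely why the paper's proof takes as input the statement for irreducible modules from~\cite[Theorem~4.5]{ah:pgl3} (where the component-group twist is controlled), then shows that $({-})^{\id\times\sigma_C}$ sends a costandard module with socle $L$ to the costandard module with socle $L^*$ (matching socle and composition factors), dually for standard modules, and deduces the claim for tilting modules. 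If you want to salvage your route, you should replace the torus-character argument by this input on irreducibles, or else prove both the opposition property and the matching of the component-group action directly.
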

\begin{proof}
The analogous statement for irreducible representations of $\Gm \ltimes G^{x_C}$ is shown in the proof of~\cite[Theorem~4.5]{ah:pgl3} (see also~\cite[Remark~4.6]{ah:pgl3}).  From this, one sees that if $V$ is the costandard $G^{x_C}_\red$-module with simple socle $L$, then $V^{\id \times \sigma_C}$ has simple socle $L^*$, and furthermore it has the same composition factors the appropriate costandard module.  It follows easily from this that $V^{\id \times \sigma_C}$ is in fact isomorphic to the costandard module with simple socle $L^*$.  Dually, if $V$ is a standard module with simple quotient $L$, then $V^{\id \times \sigma_C}$ is the standard module with simple quotient $L^*$.  The claim for tilting modules follows from these observations.
\end{proof}

\begin{lem}
\phantomsection\label{lem:repsilt-duality}
\begin{enumerate}
\item The category $\Silt(C) \subset \Db\CohGGm(C)$ is preserved by the Serre--Grothendieck duality functor.  Specifically, we have\label{it:repsilt-sg}
\[
\D(\cT_\omega[n]\la -n\ra) \cong \cT_\omega^*[-\codim C -n]\la \codim C + n\ra.
\]
\item The category $\Silt(C) \subset \Db\CohGGm(C)$ is preserved by the opposition functor $({-})^{\id \times \sigma_C}: \Db\CohGGm(C) \to \Db\CohGGm(C)$.  Specifically, we have\label{it:repsilt-opp}
\[
(\cT_\omega[n]\la -n\ra)^{\id \times \sigma_C} \cong \cT_\omega^*[n]\la -n\ra.
\]
\end{enumerate}
\end{lem}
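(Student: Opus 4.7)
Both parts reduce immediately to the case $n=0$: the shift $[n]$ commutes with any triangulated functor, and the grading shift $\la -n\ra$, being given by tensoring with the $1$-dimensional $\Gm$-representation of weight $n$, commutes with $\D_C$ (up to the obvious twist of $\cO_C$) and with $({-})^{\id \times \sigma_C}$ (since the automorphism $\id \times \sigma_C$ is the identity on the $\Gm$ factor). Thus it suffices to prove
\[
\D_C(\cT_\omega) \cong \cT_\omega^*[-\codim C]\la \codim C\ra
\qquad\text{and}\qquad
\cT_\omega^{\id \times \sigma_C} \cong \cT_\omega^*.
\]

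For part~\eqref{it:repsilt-opp}, under the equivalence~\eqref{eqn:orbit-equiv} the object $\cT_\omega$ corresponds by definition to an indecomposable tilting module for $G^{x_C}_\red$, extended trivially across $G^{x_C}_\unip$ and across the $\Gm$ factor. Lemma~\ref{lem:tilting-oppo} then gives exactly $\cT_\omega^{\id \times \sigma_C} \cong \cT_\omega^*$, and we are done.

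For part~\eqref{it:repsilt-sg}, the key observation is that $\cT_\omega$ is a \emph{vector bundle} on the smooth variety $C$, i.e., a locally free sheaf of finite rank, so
\[
\cRHom(\cT_\omega, \cO_C) \cong \cHom(\cT_\omega, \cO_C) \cong \cT_\omega^*,
\]
concentrated in degree $0$. Combining this with the definition
\[
\D_C(-) = \cRHom(-, \cO_C[-\codim C]\la \codim C\ra)
\]
immediately yields $\D_C(\cT_\omega) \cong \cT_\omega^*[-\codim C]\la \codim C\ra$, and applying the reduction above completes the proof.

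There is no serious obstacle here; the only thing to be careful about is keeping track of the homological and grading shifts (and the sign conventions built into the definition of $\D_C$). The substantive input—that the twist $(-)^{\id \times \sigma_C}$ sends tilting modules to their duals—has already been done in Lemma~\ref{lem:tilting-oppo}.
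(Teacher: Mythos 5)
Your proof is correct and follows essentially the same route as the paper: part~(2) is handled identically via Lemma~\ref{lem:tilting-oppo}, and your observation that $\cT_\omega$ is locally free, so $\cRHom(\cT_\omega,\cO_C)\cong\cT_\omega^*$ in degree $0$, is just the sheaf-side restatement of the paper's remark that under~\eqref{eqn:orbit-equiv} the functor $\D_C$ corresponds to $R\Hom({-},\bk[-\codim C]\la\codim C\ra)$. The bookkeeping of shifts and twists in your reduction to $n=0$ is also fine.
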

\begin{proof}
Part~\eqref{it:repsilt-sg} follows from the observation that under the equivalence~\eqref{eqn:orbit-equiv}, $\D_C$ corresponds to the functor
\[
R\Hom({-},\bk[-\codim C]\la \codim C\ra): \Db\Rep(\Gm \ltimes G^{x_C}) \to \Db\Rep(\Gm \ltimes G^{x_C}).
\]
Part~\eqref{it:repsilt-opp} is an immediate consequence of Lemma~\ref{lem:tilting-oppo}.
\end{proof}

\section{Nilpotent orbits embedded in the Lie algebra}
\label{sec:lie-extend}

The goal of this section is to extend the co-$t$-structure~\eqref{eqn:siltc-defn} on a nilpotent orbit $C$ to a co-$t$-structure on infinitesimal neighborhoods of $C$ in $\fg_C$.  More precisely, consider the derived category $\Db_C\CohGGm(\fg_C)$ of complexes of coherent sheaves on $\fg_C$ with set-theoretic support on $C$.  The main result of this section equips $\Db_C\CohGGm(\fg_C)$ with a co-$t$-structure such that
\[
j_{C*}: \Db\CohGGm(C) \to \Db_C\CohGGm(\fg_C)
\]
is co-$t$-exact.

\subsection{Tangent and normal spaces}

We begin with a series of calculations involving the tangent and normal spaces to $C \subset \fg$ at the point $x_C \in C$, denoted by $T_{x_C}C$ and $V_{x_C}$, respectively.  There are $\Gm \times G^{x_C}$-equivariant isomorphisms
\[
T_{x_C}C \cong [x_C,\fg],
\qquad
V_{x_C} \cong \fg/[x_C,\fg].
\]
These spaces fit into the short exact sequence of $\Gm \times G^{x_C}$-modules
\begin{equation}\label{eqn:normal-sequence}
0 \rightarrow [x_C, \fg] \rightarrow \fg \rightarrow \fg/[x_C, \fg] \rightarrow 0.
\end{equation}

\begin{lem}
\phantomsection\label{lem:normal}
\begin{enumerate}
\item As a $\Gm \ltimes G^{x_C}$-module, $V_{x_C}$ has $\Gm$-weights${}\le -2$.\label{it:normal-wts}
\item Let $i \ge 1$. As a $G^{x_C}_\red$-module, $\bigwedge^i V_{x_C}$ has good filtration dimension${}\le i-1$.\label{it:normal-gfd}
\end{enumerate}
\end{lem}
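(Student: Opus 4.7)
Part (1) will be a direct weight computation. Under the isomorphism~\eqref{eqn:semi-split}, the embedded $\Gm \subset \Gm \ltimes G^{x_C}$ acts on $\fg$ via $(\phi_{x_C}(t),t) \in G \times \Gm$, so on each $\mathrm{Ad}(\phi_{x_C})$-weight space $\fg(k)$ the resulting $\Gm$-weight equals $k - 2$. Under the pretty-good-characteristic hypothesis the nondegenerate $G$-invariant form on $\fg$ pairs $\fg(k)$ with $\fg(-k)$, so the defining property $\fg^{x_C} \subseteq \fg(\ge 0)$ of an associated cocharacter dualizes to the surjectivity of $\mathrm{ad}(x_C) : \fg(k) \to \fg(k+2)$ for $k \ge -1$. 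Hence $\fg(\ge 1) \subseteq [x_C,\fg]$, so $V_{x_C}$ is a quotient of $\fg(\le 0)$, and the claim follows.

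For part (2) I would argue by induction on $i$. The base case $i = 1$ amounts to showing that $V_{x_C}$ has a good filtration as a $G^{x_C}_\red$-module. The same associated-cocharacter analysis gives the short exact sequence of $G^{x_C}_\red$-modules
\[
0 \to \fg(\le -2) \xrightarrow{\mathrm{ad}(x_C)} \fg(\le 0) \to V_{x_C} \to 0,
\]
with injectivity encoded by $\fg^{x_C} \subseteq \fg(\ge 0)$. By Theorem~\ref{thm:donkin}, $\fg$ has a good filtration as a $G^{x_C}_\red$-module; since $\phi_{x_C}$ centralizes $G^{x_C}_\red$, each weight space $\fg(k)$ is a $G^{x_C}_\red$-direct summand of $\fg$ and therefore inherits a good filtration, and so do $\fg(\le 0)$ and $\fg(\le -2)$. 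The long exact $\Ext$-sequence then forces $\Ext^{\ge 1}_{G^{x_C}_\red}(M, V_{x_C}) = 0$ for every Weyl module $M$.

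For the inductive step I would use the truncation in degree $i$ of the Koszul resolution of $\bk$ over $\Sym^\bullet V_{x_C}$, namely the exact complex
\[
0 \to \textstyle\bigwedge^i V_{x_C} \to V_{x_C} \otimes \bigwedge^{i-1} V_{x_C} \to \Sym^2 V_{x_C} \otimes \bigwedge^{i-2} V_{x_C} \to \cdots \to \Sym^i V_{x_C} \to 0.
\]
Using the extension of Mathieu's theorem to possibly disconnected reductive groups mentioned after Theorem~\ref{thm:donkin}, each $\Sym^j V_{x_C}$ has a good filtration, so the inductive hypothesis bounds the good filtration dimension of $\Sym^j V_{x_C} \otimes \bigwedge^{i-j} V_{x_C}$ by $i - j - 1$ for $1 \le j \le i - 1$, with the rightmost term having good filtration dimension $0$. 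Splicing the complex into short exact sequences $0 \to Z_j \to K_j \to Z_{j+1} \to 0$ (with $Z_1 = \bigwedge^i V_{x_C}$) and running a backward induction on $j$ through the $\Ext$ long exact sequence yields the telescoping bound $i - j$ on the good filtration dimension of $Z_j$, and hence the desired bound for $\bigwedge^i V_{x_C}$. The main obstacle is guaranteeing that symmetric powers do preserve good filtrations in the possibly disconnected Levi setting; this is where the strengthening of Mathieu's theorem and the hypothesis $p \nmid |G^{x_C}_\red/(G^{x_C}_\red)^\circ|$ are indispensable.
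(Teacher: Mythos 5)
Part~(1) and your base case for part~(2) are fine, and essentially reproduce the paper's argument: the paper also reduces to the short exact sequence $0 \to \fg_{\le -2} \xrightarrow{\ad(x_C)} \fg_{\le 0} \to V_{x_C} \to 0$ and the weight computation $t\cdot v = t^{i-2}v$ on $\fg_i$ (it cites Jantzen for the surjectivity/injectivity of $\ad(x_C)$ where you use the invariant form, which is harmless under the standing hypotheses).

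The inductive step for part~(2), however, has a genuine gap. Your Koszul-complex argument needs every $\Sym^j V_{x_C}$ to have a good filtration as a $G^{x_C}_\red$-module, and you justify this by appealing to ``the extension of Mathieu's theorem.'' But Mathieu's theorem (and the remark in the paper extending~\cite[Proposition~3.4]{fp:claag} to disconnected groups) concerns \emph{tensor products} of modules with good filtrations, and the Donkin-pair statement of Theorem~\ref{thm:donkin} concerns \emph{restriction}; neither says anything about symmetric (or exterior) powers, and in positive characteristic Schur-functor images of good-filtration modules need not have good filtrations. So knowing that $V_{x_C}$ has a good filtration does not give you $\Sym^j V_{x_C}$ for $j \ge 2$, and no result quoted in the paper supplies this. (The homological bookkeeping in your splicing argument is fine; the input is what is missing.) This is precisely the difficulty the paper's proof is designed to avoid: it applies the symmetric-algebra statement only to the adjoint representation, where \cite[Proposition~4.4]{aj} guarantees that $\Sym(\fg)$ has a good filtration, deduces via the bar resolution that $\bigwedge^i\fg$ has good filtration dimension${}\le i-1$ for $G$ (Lemma~\ref{lem:wedge-adjoint}), restricts to $G^{x_C}_\red$ using Theorem~\ref{thm:donkin}, and then transfers the bound to $\bigwedge^i V_{x_C}$ by filtering $\bigwedge^j \fg_{\le 0}$ with subquotients $\bigwedge^{j-k}(\fg_{\le -2}) \otimes \bigwedge^k V_{x_C}$ coming from~\eqref{eqn:normal-sequence-gr2}. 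To repair your argument you would either need to prove that $\Sym^j V_{x_C}$ has a good filtration (a nontrivial statement not available from the cited results) or switch to an argument of the paper's type that only ever takes symmetric powers of $\fg$ itself.
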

\begin{proof}
The cocharacter $\phi_{x_C}$ induces a grading $\fg = \bigoplus_{i \in \Z} \fg_i$.  The adjoint action of $G^{x_C}_\red$ preserves this grading, and the map $\ad(x): \fg \to \fg$ sends each $\fg_i$ to $\fg_{i+2}$.  Thus,~\eqref{eqn:normal-sequence} is the direct sum of short exact sequences of $G^{x_C}_\red$-modules
\[
0 \to [x_C, \fg_{i-2}] \rightarrow \fg_i \rightarrow \fg_i/[x_C, \fg_{i-2}] \to 0.
\]
According to the proof of~\cite[Proposition~5.8]{jan:nort}, the operator $\ad(x_C): \fg_{i-2} \to \fg_i$ is surjective for $i > 0$, so taking the sum over all $i \le 0$, we obtain
\begin{equation}\label{eqn:normal-sequence-gr}
0 \to \bigoplus_{i \le 0} [x_C, \fg_{i-2}] \to \bigoplus_{i \le 0} \fg_i \to V_{x_C} \to 0.
\end{equation}
Under the isomorphism~\eqref{eqn:semi-split}, we see that $t \in \Gm$ acts on $v \in \fg_i$ by $t \cdot v = (\phi_{x_C}(t),t)\cdot v = \phi_{x_C}(t)t^{-2}v = t^{i-2}v$.  Thus,~\eqref{eqn:normal-sequence-gr} shows that $V_{x_C}$ has $\Gm$-weights${}\le -2$.

By~\cite[Proposition~5.8]{jan:nort} again, $\ad(x_C): \fg_{i-2} \to \fg_i$ is injective for $i -2 < 0$, so we can rewrite~\eqref{eqn:normal-sequence-gr} as short exact sequence of $G^{x_C}_\red$-modules
\begin{equation}\label{eqn:normal-sequence-gr2}
0 \to \fg_{\le -2} \xrightarrow{\ad(x_C)}  \fg_{\le 0} \to V_{x_C} \to 0,
\end{equation}
where we introduce the notation
\[
\fg_{\le n} = \bigoplus_{i \le n} \fg_i.
\]
(Note that the passage from~\eqref{eqn:normal-sequence-gr} to~\eqref{eqn:normal-sequence-gr2} does not respect the $\Gm$-action on the first term.)  This sequence implies that any exterior power $\bigwedge^j \fg_{\le 0}$ admits a $G^{x_C}_\red$-stable filtration
\[
0 = M^j_{-1} \subset M^j_0 \subset M^j_1 \subset \cdots M^j_j = \textstyle\bigwedge^j (\fg_{\le 0})
\]
such that
\[
\textstyle
M^j_k/M^j_{k-1} \cong \bigwedge^{j-k} (\fg_{\le -2}) \otimes \bigwedge^k V_{x_C}.
\]

As a $G^{x_C}_\red$-module, $\fg_{\le n}$ is a direct summand of $\fg$, and so $\bigwedge^j (\fg_{\le n})$ is a direct summand of $\bigwedge^j \fg$.  Using Theorem~\ref{thm:donkin} and Lemma~\ref{lem:wedge-adjoint} below, we conclude that
\begin{equation}\label{eqn:phi-goodfilt}
\text{$\textstyle\bigwedge^j (\fg_{\le n})$ has good filtration dimension${}\le j-1$.}
\end{equation}

We will now show by induction on $j$ (for $j \ge 1$) that every step $M^j_k$ of the filtration described above has good filtration dimension${}\le j-1$.  We will simultaneously prove that $\bigwedge^j V_{x_C}$ has good filtration dimension${}\le j-1$.  If $j = 1$, then the modules
\[
M^1_0 = \fg_{\le -2}
\qquad\text{and}\qquad
M^1_1 = \fg_{\le 0}
\]
have good filtrations by~\eqref{eqn:phi-goodfilt}, and then the short exact sequence~\eqref{eqn:normal-sequence-gr2} shows that $V_{x_C}$ has a good filtration.

Now suppose that $j > 1$.  We start by observing that $M^j_0 = \bigwedge^j \fg_{\le -2}$ and $M^j_j = \bigwedge^j \fg_{\le 0}$ both have good filtration dimension${}\le j-1$ by~\eqref{eqn:phi-goodfilt}.  We treat the remaining $M^j_k$ by induction on $k$. Suppose $0 < k < j$, and that $M^j_{k-1}$ is known to have good filtration dimension${}\le j-1$.  Consider the short exact sequence
\[
\textstyle
0 \to M^j_{k-1} \to M^j_k \to \bigwedge^{j-k} (\fg_{\le -2}) \otimes \bigwedge^k V_{x_C} \to 0.
\]
Since $k < j$, by induction, $\bigwedge^k V_{x_C}$ is known to have good filtration dimension${}\le k-1$, so by~\cite[Proposition~3.4(c)]{fp:claag}, $\bigwedge^{j-k} (\fg_{\le -2}) \otimes \bigwedge^k V_{x_C}$ has good filtration dimension${}\le j-2$.  The short exact sequence above then implies that $M^j_k$ also has good filtration dimension${}\le j-1$.

It remains to show that $\bigwedge^j V_{x_C}$ has good filtration dimension${}\le j-1$.  This follows from the short exact sequence
\[
\textstyle
0 \to M^j_{j-1} \to M^j_j \to \bigwedge^j V_{x_C} \to 0. \qedhere
\]
\end{proof}

\begin{lem}\label{lem:wedge-adjoint}
Let $i \ge 1$. As a $G$-representation, the $i$th exterior power $\bigwedge^i \fg$ of the adjoint representation has good filtration dimension${}\le i-1$.
\end{lem}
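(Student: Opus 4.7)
The plan is to proceed by induction on $i$, with base case $i = 1$ being the standard fact that in pretty good characteristic the adjoint representation $\fg$ admits a good filtration, so its good filtration dimension is $0 = i - 1$.

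For the inductive step, with $i \ge 2$, the main input is the Koszul resolution. The Koszul complex $\Sym(\fg) \otimes \bigwedge^\bullet \fg$ is a $G$-equivariant free resolution of the residue field $\bk$ over the polynomial ring $\Sym(\fg)$, and its internal-degree-$i$ strand is, in any characteristic, a $G$-equivariant exact sequence
\[
0 \to \bigwedge^i \fg \to \fg \otimes \bigwedge^{i-1} \fg \to \Sym^2 \fg \otimes \bigwedge^{i-2} \fg \to \cdots \to \Sym^i \fg \to 0.
\]
In pretty good characteristic each symmetric power $\Sym^k \fg$ has a good filtration; this is a known consequence of Kostant-type results together with the good filtration of $\bk[\cN]$ (combined with the self-duality $\fg \cong \fg^*$ afforded by the nondegenerate invariant form, which identifies the graded pieces of $\bk[\fg]$ with the $\Sym^k \fg$). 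Combining this input with the inductive hypothesis and the tensor-product estimate from \cite[Prop.~3.4(c)]{fp:claag}, the $k$-th term $\Sym^k \fg \otimes \bigwedge^{i-k}\fg$ has good filtration dimension at most $i - k - 1$ for $1 \le k \le i-1$, while $\Sym^i \fg$ has good filtration dimension $0$.

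The last step is to splice the long exact sequence above into short exact sequences and iterate the elementary bound: for $0 \to A \to B \to C \to 0$, the good filtration dimension of $A$ is at most the maximum of that of $B$ and one more than that of $C$. A direct bookkeeping computation, starting from the rightmost term and working leftward, then yields the desired bound $i - 1$ on the good filtration dimension of $\bigwedge^i \fg$ (the worst contribution comes from the tail $\Sym^i \fg$). The main obstacle in this plan is confirming the good filtration of the symmetric powers $\Sym^k \fg$ in pretty good characteristic, which requires input beyond formal manipulations; once that is granted, the rest is a routine combination of Koszul exactness, the tensor-product estimate, and elementary homological algebra of short exact sequences.
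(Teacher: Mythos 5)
Your proof is correct, and it takes a route that is close in spirit to, but mechanically different from, the paper's. The paper identifies $\bigwedge^i\fg$ with $\Tor_i^{\Sym(\fg)}(\bk,\bk)$ and computes via the \emph{bar} resolution, producing an exact sequence $0 \to \bigwedge^i\fg \to B^i_i \to \cdots \to B^i_1 \to 0$ whose terms are tensor products of symmetric powers $\Sym^{a_1}(\fg)\otimes\cdots\otimes\Sym^{a_j}(\fg)$; since each such term has a good filtration outright, the bound follows in one step, with no induction on $i$ and no need to estimate good filtration dimensions of tensor factors of positive dimension. You instead use the internal-degree-$i$ strand of the \emph{Koszul} resolution, whose middle terms $\Sym^k\fg\otimes\bigwedge^{i-k}\fg$ involve lower exterior powers, so you must run an induction on $i$ and invoke the tensor-product estimate of \cite[Proposition~3.4(c)]{fp:claag} (which the paper also uses, in the proof of Lemma~\ref{lem:normal}, so this tool is certainly available); your splicing bookkeeping with the bound $\mathrm{gfd}(A)\le\max(\mathrm{gfd}(B),\mathrm{gfd}(C)+1)$ for $0\to A\to B\to C\to 0$ is carried out correctly and does yield $i-1$. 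The one soft spot is your justification of the key external input, that each $\Sym^k\fg$ has a good filtration: the route you sketch through $\bk[\cN]$ and self-duality is somewhat circular in the literature (good filtrations on $\bk[\cN]$ are typically deduced from the good filtration on $\Sym(\fg^*)$, not the reverse); the clean citation is \cite[Proposition~4.4]{aj}, which is exactly the input the paper uses, so with that reference substituted your argument is complete. Net comparison: the paper's bar-resolution argument is shorter and induction-free, while yours uses the more familiar Koszul complex at the cost of an induction and the Friedlander--Parshall tensor estimate.
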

\begin{proof}
According to~\cite[Proposition~4.4]{aj}, under our assumptions on $G$, the symmetric algebra $\Sym(\fg)$ has a good filtration.  Recall that $\bigwedge^i\fg$ can be identified with $\Tor_i^{\Sym(\fg)}(\bk,\bk)$.  The latter can be computed using the bar resolution of the trivial $\Sym(\fg)$-module.  Explicitly, if we let
\[
B^i_j = \bigoplus_{\substack{a_1, \ldots, a_j \ge 1\\ a_1 + \cdots + a_j = i}}
\Sym^{a_1}(\fg) \otimes \cdots \otimes \Sym^{a_j}(\fg),
\]
then there is an exact sequence
\[
\textstyle
0 \to \bigwedge^i \fg \to B^i_i \to B^i_{i-1} \to \cdots \to B^i_1 \to 0.
\]
(See, for instance,~\cite[Eqn.~(1.7)]{pri:kr} for a formula for the maps in this complex.)  Since each $B^i_j$ has a good filtration, this sequence shows that $\bigwedge^i \fg$ has good filtration dimension${}\le i-1$.
\end{proof}

\begin{lem}\label{lem:ext-wedge}
Let $j_C: C \hookrightarrow \fg_C$ be the inclusion map.  For all $k \ge 0$, we have $\cExt^k(j_{C*}\cO_C, j_{C*}\cO_C) \cong j_{C*}\bigwedge^k \cV_C$, where $\cV_C$ is the normal bundle on $C$.
\end{lem}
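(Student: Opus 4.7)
The plan is to exploit the fact that both $C$ and $\fg_C$ are smooth, so $j_C$ is a regular closed embedding of codimension $r := \dim V_{x_C}$ whose normal bundle is $\cV_C$, and then to apply the classical Koszul-complex computation of $\cExt$-sheaves for such embeddings.

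More specifically, let $\cI \subset \cO_{\fg_C}$ denote the $G \times \Gm$-equivariant ideal sheaf of $C$. On a sufficiently small open subset $U \subset \fg_C$, regularity allows one to pick a regular sequence $f_1, \ldots, f_r$ generating $\cI|_U$, and the Koszul complex on $(f_1, \ldots, f_r)$ is a locally free resolution of $(j_{C*}\cO_C)|_U$:
\[
0 \to \textstyle\bigwedge^r \cO_U^r \to \cdots \to \cO_U^r \to \cO_U \to (j_{C*}\cO_C)|_U \to 0.
\]
I would then compute $\cExt^k$ by applying $\cHom({-}, j_{C*}\cO_C|_U)$; the resulting $k$-th term is $j_{C*}\bigwedge^k(\cO_C^r|_{C \cap U})$, and all differentials vanish because each involves multiplication by an $f_i$, which restricts to zero on $C$. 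This gives $\cExt^k|_U \cong j_{C*}\bigwedge^k(\cO_C^r|_{C \cap U})$.

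The last step identifies the free rank-$r$ sheaf with $\cV_C$: the surjection $\cO_U^r \twoheadrightarrow \cI|_U$ sending $e_i \mapsto f_i$ descends to an isomorphism $\cO_C^r|_{C \cap U} \simto (\cI/\cI^2)|_{C \cap U}$, and dualizing yields $\cO_C^r|_{C \cap U} \cong \cV_C|_{C \cap U}$ canonically. Since every identification above is natural, they glue globally and respect the $G \times \Gm$-equivariant structure, producing the claimed isomorphism. I expect no serious obstacle: the content of the lemma is a standard fact about regular embeddings, and the only point that deserves attention is ensuring the Koszul resolution and the dualization of $\cI/\cI^2$ can both be carried out $G \times \Gm$-equivariantly—but this is automatic, since everything in sight is constructed directly from the natural equivariant sheaf $\cI$ and its functorial quotients, duals, and exterior powers.
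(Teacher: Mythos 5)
Your route is genuinely different from the paper's, and in outline it is the standard one: $j_C$ is a closed embedding of smooth varieties, hence regular, so one can resolve $j_{C*}\cO_C$ locally by a Koszul complex, observe that the differentials die after applying $\cHom({-},j_{C*}\cO_C)$, and identify the resulting free modules with exterior powers of the normal bundle. The paper instead avoids local resolutions altogether: it first identifies $\cExt^0$ and $\cExt^1$ canonically (getting $\cO_C$ and $\cV_C$), then uses the ring structure on $\bigoplus_k\cExt^k(j_{C*}\cO_C,j_{C*}\cO_C)$ together with a transverse slice $x_C+S$ and smooth base change to compute the underlying graded ring as an exterior algebra, and finally invokes homogeneity of the orbit (via the equivalence $\CohGGm(C)\cong\Rep(\Gm\ltimes G^{x_C})$) to force $\cE^k\cong\bigwedge^k\cV_C$ equivariantly. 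The payoff of the paper's argument is precisely that no non-canonical local choices enter, so equivariance comes for free; the payoff of yours is that it is shorter and uses only the classical local theory of regular embeddings.

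The one place where your write-up has a real gap is the final paragraph. The claim that equivariance and gluing are ``automatic, since everything in sight is constructed directly from the natural equivariant sheaf $\cI$'' is not accurate as stated: the Koszul resolution is built from a chosen regular sequence $f_1,\dots,f_r$ on a small open set $U$, and such a choice is neither global nor $G\times\Gm$-stable (no $G\times\Gm$-stable open subset of $\fg_C$ meeting $C$ admits one, since such subsets contain all of $C$). So the isomorphism $\cExt^k|_U\cong j_{C*}\bigwedge^k\cV_C|_U$ you produce is a priori choice-dependent, and neither the gluing over an open cover nor the compatibility with the group action is immediate. What is needed (and is true, but must be said) is that the composite identification depends only on the induced trivialization of $\cI/\cI^2|_{C\cap U}$: a second regular sequence generating $\cI|_U$ is related to the first by a matrix invertible near $C\cap U$, this matrix acts compatibly on the dualized Koszul terms and on $\bigwedge^k\cV_C$, and hence the local isomorphisms are independent of the choice. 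Only after this canonicity is established do the local isomorphisms glue, and only then does compatibility with pullback along the action and projection maps $G\times\Gm\times\fg_C\to\fg_C$ yield an isomorphism in $\CohGGm(\fg_C)$, which is what Corollary~\ref{cor:jc-pushpull} and Lemma~\ref{lem:jc-unit-exact} actually require. With that verification supplied, your argument gives a legitimate alternative proof of Lemma~\ref{lem:ext-wedge}.
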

\begin{proof}
Observe first that each $\cExt^k(j_{C*}\cO_C, j_{C*}\cO_C)$ is scheme-theoretically supported on $C$, as shown by the following calculation:
\begin{multline*}
\cExt^k(j_{C*}\cO_C, j_{C*}\cO_C) \cong \cH^k(R\cHom(j_{C*}\cO_C, j_{C*}\cO_C) ) \\
\cong j_{C*}\cH^k(R\cHom(j_C^*j_{C*}\cO_C,\cO_C)).
\end{multline*}
Let $\cE^k \in \Coh^{G \times \Gm}(C)$ be the object such that $\cExt^k(j_{C*}\cO_C, j_{C*}\cO_C) \cong j_{C*}\cE^k$.  

For $k = 0$, it is clear that $\cHom(j_{C*}\cO_C, j_{C*}\cO_C) \cong j_{C*}\cO_C$.  Next, let $\cI \subset \cO_{\fg_C}$ be the ideal sheaf corresponding to $C$, so that we have a short exact sequence $0 \to \cI \to \cO_{\fg_C} \to j_{C*}\cO_C \to 0$.  This gives rise to a long exact sequence
\begin{multline*}
0 \to \cHom(j_{C*}\cO_C, j_{C*}\cO_C) \simto j_{C*}\cO_C \to \cHom(\cI, j_{C*}\cO)\\ \to \cExt^1(j_{C*}\cO_C, j_{C*}\cO_C) \to
 \cExt^1(\cO_{\fg_C}, j_{C*}\cO_C) \to \dots.
\end{multline*}
Since the last term vanishes and the first two terms are isomorphic, we have
\[
\cExt^1(j_{C*}\cO_C, j_{C*}\cO_C) \cong \cHom(\cI, j_{C*}\cO) \cong j_{C*}\cHom(j_C^*\cI, \cO_C).
\]
The sheaf $j_C^*\cI \cong \cI/\cI^2$ is the conormal sheaf, so $\cHom(j_C^*\cI, \cO_C)$ is the normal sheaf $\cV_C$.  So far we have shown that
\begin{equation}\label{eqn:ext-lowdeg}
\textstyle
\cE^0 \cong \cO_C = \bigwedge^0 \cV_C
\qquad\text{and}\qquad
\cE^1 \cong \cV_C = \bigwedge^1 \cV_C.
\end{equation}

To proceed further, we will exploit the fact that $\bigoplus_k \cExt^k(j_{C*}\cO_C, j_{C*}\cO_C)$ is a sheaf of algebras.  It follows that $\bigoplus_k \cE^j$ is also a sheaf of algebras; it corresponds under~\eqref{eqn:orbit-equiv} to a graded ring with a compatible $(G \times \Gm)^{x_C}$-action.   

We will now compute this ring.  Let $S$ be a $\phi_{x_C}$-stable linear complement to $[x_C,\fg]$ in $\fg$.  (In general, $S$ will \emph{not} be stable under $G^{x_C}$.)  Then $x_C + S$ is a transverse slice to $C$ in $\fg$: it does not meet $\partial C$, and the map
\[
m: G \times S \to \fg_C \qquad\text{given by}\qquad (g,s) \mapsto \mathrm{Ad}(g)(x_C+s)
\]
is smooth.  It is also $G \times \Gm$-equivariant, where we let $G \times \Gm$ act on $G \times S$ by
\[
(g,z) \cdot (h,s) = (gh \phi_{x_C}^{-1}(z), z^{-2}\mathrm{Ad}(\phi_{x_C}(z))(s)).
\]
Consider the following diagram:
\[
\begin{tikzcd}
\{0\} \ar[d, "i"'] \ar[r] & G \times \{0\} \ar[r, "m_0"] \ar[d, "\tilde\jmath"'] & C \ar[d, "j_C"] \\
S \ar[r] & G \times S \ar[r, "m"] & \fg_C
\end{tikzcd}
\]
Since $m$ and $m_0$ are smooth, by smooth base change, we have natural isomorphisms
\[
m^*\cExt^k(j_{C*}\cO_C, j_{C*}\cO_C) \cong \cExt^k(\tilde\jmath_*\cO_G, \tilde\jmath_*\cO_G) \cong \tilde \jmath_*m_0^*\cE^k.
\]
Moreover, the direct sum $\bigoplus_k m^*\cExt^k(j_{C*}\cO_C, j_{C*}\cO_C)$ is again a sheaf of algebras.  These sheaves live in $\Coh^{G \times \Gm}(G \times S)$.  Now consider the group $\{(\phi_{x_C}(z),z): z \in \Gm\} \subset G \times \Gm$.  This group is isomorphic to $\Gm$, and it stabilizes $\{e\} \times S \subset G \times S$, so we obtain equivalences of categories
\[
\Coh^{G \times \Gm}(G \times S) \cong \Coh^{\Gm}(S)
\qquad\text{and}\qquad
\Coh^{G \times \Gm}(G) \cong \Coh^{\Gm}(S),
\]
analogous to~\eqref{eqn:orbit-equiv}.  Thus, computing $m^*\cExt^k(j_{C*}\cO_C, j_{C*}\cO_C)$ is equivalent to computing the sheaves $\cExt^k(i_*\cO_{\{0\}}, i_*\cO_{\{0\}}) \cong i_*\Ext^k(i_*\cO_{\{0\}}, i_*\cO_{\{0\}})$. Since $S$ is a vector space, it is well known that 
\[
\textstyle
\bigoplus_{k \ge 0} \Ext^k(i_*\cO_{\{0\}}, i_*\cO_{\{0\}}) \cong \bigwedge^\bullet S.
\]

Let us summarize: the sheaf of rings $\bigoplus_k \cE^k$ corresponds under~\eqref{eqn:orbit-equiv} to a graded $\Gm \ltimes G^{x_C}$-equivariant ring.  The computation above shows that the underlying graded ring is an exterior algebra (the $G^{x_C}$-action is lost in this computation).  In view of~\eqref{eqn:ext-lowdeg}, we must have $\cE^k \cong \bigwedge^k \cV$ for all $k \ge 0$.
\end{proof}

\begin{cor}\label{cor:jc-pushpull}
For any $\cF \in \CohGGm(C)$, we have
\[
\textstyle
\cH^i(j_C^*j_{C*}\cF) \cong \cF \otimes \bigwedge^{-i} \cV_C^*,
\qquad
\cH^i(j_C^!j_{C*}\cF) \cong \cF \otimes \bigwedge^i \cV_C.
\]
\end{cor}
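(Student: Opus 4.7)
The strategy is to first establish both isomorphisms in the special case $\cF = \cO_C$, using Lemma~\ref{lem:ext-wedge} together with Grothendieck--Serre duality, and then to extend to arbitrary $\cF$ via the projection formula.

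For $\cF = \cO_C$, the sheafified $(j_{C*}, j_C^!)$-adjunction yields
\[
\cRHom_{\cO_{\fg_C}}(j_{C*}\cO_C, j_{C*}\cO_C) \;\cong\; j_{C*}\bigl(j_C^! j_{C*}\cO_C\bigr).
\]
Taking $k$th cohomology sheaves on both sides, invoking Lemma~\ref{lem:ext-wedge}, and using that $j_{C*}$ is fully faithful, we obtain $\cH^k(j_C^! j_{C*}\cO_C) \cong \bigwedge^k \cV_C$. For the $j_C^*$ case, recall that $j_C$ is a closed embedding of smooth varieties with normal bundle $\cV_C$ and codimension $c = \dim\fg - \dim C$; Grothendieck--Serre duality then provides a natural isomorphism
\[
j_C^! \cG \;\cong\; j_C^* \cG \otimes_{\cO_C} \det(\cV_C)[-c]
\]
for $\cG \in \Db\CohGGm(\fg_C)$. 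Applied with $\cG = j_{C*}\cO_C$ and combined with the standard identity $\bigwedge^k \cV_C \otimes \det(\cV_C)^* \cong \bigwedge^{c-k}\cV_C^*$, this yields $\cH^i(j_C^* j_{C*}\cO_C) \cong \bigwedge^{-i}\cV_C^*$.

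To extend to general $\cF$, I would apply the projection formula $j_{C*}(\cF' \otimes^L_{\cO_C} j_C^*\cG) \cong j_{C*}\cF' \otimes^L_{\cO_{\fg_C}} \cG$ twice---once with $\cG = j_{C*}\cO_C$, and once with $\cF' = \cO_C$ and $\cG = j_{C*}\cF$---and use the full faithfulness of $j_{C*}$ to deduce a natural isomorphism
\[
j_C^* j_{C*}\cF \;\cong\; \cF \otimes^L_{\cO_C} j_C^* j_{C*}\cO_C.
\]
Since the cohomology sheaves $\bigwedge^{-i}\cV_C^*$ of $j_C^* j_{C*}\cO_C$ are locally free, the hyper-Tor spectral sequence collapses at $E_2$, giving $\cH^i(j_C^* j_{C*}\cF) \cong \cF \otimes \bigwedge^{-i}\cV_C^*$. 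The corresponding formula for $j_C^!$ then follows by tensoring both sides with $\det(\cV_C)[-c]$ and reapplying Grothendieck--Serre duality.

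I expect the principal difficulty to be bookkeeping rather than any conceptual obstacle: one must verify that Grothendieck--Serre duality and the projection formula are available in the $G \times \Gm$-equivariant derived category on the smooth (but non-projective) variety $\fg_C$, and must correctly track shifts, twists, and equivariance throughout. Since $\fg_C$ is open in the smooth variety $\fg$ and $C \subset \fg_C$ is a smooth closed $G \times \Gm$-stable subvariety, these results are standard in this setting.
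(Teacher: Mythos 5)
Your argument is correct, and its overall skeleton (compute the special case $\cF = \cO_C$ from Lemma~\ref{lem:ext-wedge}, then pass to general $\cF$ by the projection formula and exactness of $\otimes$ on $\CohGGm(C)$, checking everything after applying $j_{C*}$) matches the paper's. The genuine difference is in how the two dual statements are related: you first extract the $j_C^!$-formula for $\cO_C$ from the sheafified $(j_{C*},j_C^!)$-duality and then deduce the $j_C^*$-formula via the relative dualizing complex identity $j_C^!(-) \cong j_C^*(-)\otimes\det(\cV_C)[-c]$ for the regular embedding $C \hookrightarrow \fg_C$, together with $\bigwedge^k\cV_C\otimes\det(\cV_C)^*\cong\bigwedge^{c-k}\cV_C^*$; the paper never invokes that formula. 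Instead it gets the $j_C^*$-statement directly from the $(j_C^*,j_{C*})$-adjunction $R\cHom(j_{C*}\cO_C,j_{C*}\cO_C)\cong j_{C*}R\cHom(j_C^*j_{C*}\cO_C,\cO_C)$, using that $\cHom(-,\cO_C)$ is exact on $\CohGGm(C)$ (it is the contragredient of representations under~\eqref{eqn:orbit-equiv}), and then handles $j_C^!$ by the parallel adjunction $R\cHom(j_{C*}\cO_C,j_{C*}\cF)\cong j_{C*}R\cHom(j_C^*j_{C*}\cO_C,\cF)$. What your route buys is a cleaner symmetry between the two formulas; what it costs is the extra input of equivariant Grothendieck--Serre duality for the embedding, which is standard but is precisely the bookkeeping you flag, and which the paper's argument avoids entirely. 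One small caveat: derived $j_{C*}$ is not fully faithful, so you cannot literally ``cancel'' it to get an isomorphism $j_C^*j_{C*}\cF\cong\cF\otimes^L j_C^*j_{C*}\cO_C$ in $\Db\CohGGm(C)$; but since the corollary only concerns cohomology sheaves, exactness of $j_{C*}$ plus full faithfulness of the underived pushforward suffices (this is exactly the paper's ``it is enough to prove the isomorphisms after applying $j_{C*}$''), so this is a matter of phrasing rather than a gap.
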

\begin{proof}
We have
\[
j_{C*}R\cHom(j_C^*j_{C*}\cO_C, \cO_C) \cong R\cHom(j_{C*}\cO_C, j_{C*}\cO_C),
\]
and hence, by Lemma~\ref{lem:ext-wedge},
\[
\textstyle
j_{C*}\cH^i(R\cHom(j_C^*j_{C*}\cO_C, \cO_C)) \cong \cExt^i(j_{C*}\cO_C, j_{C*}\cO_C) \cong j_{C*}\bigwedge^i \cV_C.
\]
The functor $\cHom({-},\cO_C)$ on $\CohGGm(C)$ is exact; it corresponds under ~\eqref{eqn:orbit-equiv} to taking the contragredient of a $(G \times \Gm)^{x_C}$-represesentation.  We conclude that $\cHom(\cH^{-i}(j_C^*j_{C*}\cO_C), \cO_C) \cong \bigwedge^i \cV_C$, and therefore
\[
\textstyle
\cH^i(j_C^*j_{C*}\cO_C) \cong \bigwedge^{-i} \cV_C^*.
\]

Now let $\cF \in \CohGGm(C)$.  It is enough to prove that the isomorphisms in the statement of the corollary hold after applying $j_{C*}$.  The projection formula implies that
\[
j_{C*}j_C^*j_{C*}\cF \cong j_{C*}(\cO_C \lotimes j_C^*j_{C*}\cF) 
\cong j_{C*}\cO_C \lotimes j_{C*}\cF \cong j_{C*}(j_C^*j_{C*}\cO_C \lotimes \cF).
\]
Since $\otimes$ on $\CohGGm(C)$ is exact, we deduce that
\[
\textstyle
\cH^i(j_C^*j_{C*}\cF) \cong \cH^i((j_C^*j_{C*}\cO_C) \otimes \cF \cong \bigwedge^{-i} \cV_C^* \otimes \cF.
\]
The formula for $j_C^!j_{C*}\cF$ follows by a similar calculation using
\[
j_{C*}j_C^!j_{C*}\cF \cong j_{C*}R\cHom(\cO_C, j_C^!j_{C*}\cF)
\cong j_{C*}R\cHom(j_C^*j_{C*}\cO_C, \cF). \qedhere
\]
\end{proof}

\subsection{Construction of the co-\texorpdfstring{$t$}{t}-structure}

We are now ready to put the calculations above to use.

\begin{lem}
\phantomsection\label{lem:jc-unit-exact}
\begin{enumerate}
\item The functor $j_C^!j_{C*}: \Db\CohGGm(C) \to \Db\CohGGm(C)$ is left co-$t$-exact.  Moreover, for $\cF \in \Db\CohGGm(C)_{\le 0}$, the cone of the adjunction map $\cF \to j^!j_*\cF$ lies in $\Db\CohGGm(C)_{\le -1}$.
\item The functor $j_C^*j_{C*}: \Db\CohGGm(C) \to \Db\CohGGm(C)$ is right co-$t$-exact.  Moreover, for $\cF \in \Db\CohGGm(C)_{\ge 0}$, the cocone of the adjunction map $j^*j_*\cF \to \cF$ lies in $\Db\CohGGm(C)_{\ge 1}$.
\end{enumerate}
\end{lem}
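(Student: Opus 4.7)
The plan is to apply Corollary~\ref{cor:jc-pushpull} to read off the cohomology of $j_C^!j_{C*}\cF$ and $j_C^*j_{C*}\cF$ in terms of the exterior powers $\bigwedge^i \cV_C$ and $\bigwedge^i \cV_C^*$, and then use Lemma~\ref{lem:normal} together with Lemma~\ref{lem:gfd-co-t} to locate those sheaves precisely inside the co-$t$-structure on $\Db\CohGGm(C)$.

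For part~(1), Corollary~\ref{cor:jc-pushpull} gives $\cH^0(j_C^!j_{C*}\cF) \cong \cF$ and $\cH^i(j_C^!j_{C*}\cF) \cong \cF \otimes \bigwedge^i \cV_C$ for $i \ge 1$, so the cone $K$ of the unit $\cF \to j_C^!j_{C*}\cF$ has cohomology supported in degrees $i \ge 1$. Standard $t$-structure truncation assembles $K$ by iterated extensions from the shifts $\cF \otimes \bigwedge^i \cV_C[-i]$ with $i \ge 1$, so it is enough to place each such object in $\Db\CohGGm(C)_{\le -1}$.

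Transferring via~\eqref{eqn:orbit-equiv}, the key claim becomes $\bigwedge^i V_{x_C}[-i] \in \Db\Rep(\Gm \ltimes G^{x_C})_{\le -1}$ for every $i \ge 1$. Because $\Gm$ acts with strictly positive weights on $\Lie(G^{x_C}_\unip)$, the unipotent part $G^{x_C}_\unip$ raises $\Gm$-weight, so the decreasing filtration $F_n := \bigoplus_{k \ge n}(\bigwedge^i V_{x_C})_k$ is $\Gm \ltimes G^{x_C}$-stable, and each subquotient $M_n := F_n/F_{n+1}$ is the weight-$n$ piece carrying trivial $G^{x_C}_\unip$-action---that is, a $G^{x_C}_\red$-module twisted by $\la -n\ra$. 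Lemma~\ref{lem:normal}\eqref{it:normal-wts} forces $M_n = 0$ unless $n \le -2i$, while Lemma~\ref{lem:normal}\eqref{it:normal-gfd}, applied to the $G^{x_C}_\red$-direct-summand $M_n$ of $\bigwedge^i V_{x_C}$, gives good filtration dimension ${}\le i-1$. Lemma~\ref{lem:gfd-co-t} then places $M_n$ in $\Db\Rep(\Gm \ltimes G^{x_C})_{\le (i-1)+n} \subseteq \Db\Rep(\Gm \ltimes G^{x_C})_{\le -i-1}$; taking extensions, $\bigwedge^i V_{x_C}$ lies in the same category, and the shift by $[-i]$ lands in $\Db\Rep(\Gm \ltimes G^{x_C})_{\le -1}$ as required. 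Tensoring with $\cF \in \Db\CohGGm(C)_{\le 0}$ preserves the bound, because Donkin--Mathieu ensures that on the generating objects $\coweyl_\omega[n]\la k\ra$ tensor products remain built from $\coweyl[n+n']\la k+k'\ra$ with the inequalities $n+k \ge 0$ adding correctly.

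For part~(2), I would deduce the statement from part~(1) by Serre--Grothendieck duality: the compatibility~\eqref{eqn:serre-groth-open} gives $\D_C \circ j_C^! \circ j_{C*} \cong j_C^* \circ j_{C*} \circ \D_C$, and Lemma~\ref{lem:repsilt-duality}\eqref{it:repsilt-sg} shows that $\D_C$ is a contravariant auto-equivalence preserving $\Silt(C)$, hence swaps $\Db\CohGGm(C)_{\le 0}$ with $\Db\CohGGm(C)_{\ge 0}$. Applying $\D_C$ to the triangle furnished by part~(1) with input $\D_C\cF$, and using $\D_C^2 \cong \id$ on the smooth variety $C$, converts the cone statement into the cocone statement required by part~(2). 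The main obstacle is the filtration argument above: one must verify $G^{x_C}_\unip$-stability of the $\Gm$-weight filtration, and then combine the weight bound of Lemma~\ref{lem:normal}\eqref{it:normal-wts} with the good-filtration-dimension bound of Lemma~\ref{lem:normal}\eqref{it:normal-gfd} to obtain precisely ${}\le -i-1$, exactly sharp enough that the shift by $[-i]$ yields $\Db\CohGGm(C)_{\le -1}$ rather than merely $\Db\CohGGm(C)_{\le 0}$.
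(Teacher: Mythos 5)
Your part~(1) is essentially the paper's proof: the same three inputs (Corollary~\ref{cor:jc-pushpull}, Lemma~\ref{lem:normal}, Lemma~\ref{lem:gfd-co-t}) and the same numerics ($\Gm$-weights ${}\le -2i$ plus good filtration dimension ${}\le i-1$ give coaisle index ${}\le -i-1$, so the shift $[-i]$ lands in $\Db\CohGGm(C)_{\le -1}$). The differences are organizational. The paper first reduces $\cF$ to a generator $\cT_\omega[n]\la k\ra$ with $n+k\ge 0$ and only then computes; you keep $\cF$ general, which makes two steps need a word of care: Corollary~\ref{cor:jc-pushpull} is stated for sheaves, so for a complex $\cF$ the cone of the unit is not literally assembled by ``standard $t$-structure truncation'' of $j_C^!j_{C*}\cF$ --- either reduce to generators first, or use the projection-formula identification $j_C^!j_{C*}\cF \cong \cF \otimes j_C^!j_{C*}\cO_C$ (valid for complexes, as in the corollary's proof) so that the cone is $\cF$ tensored with the truncation of $j_C^!j_{C*}\cO_C$; and your closing ``tensoring preserves the bound'' is really the inclusion $\Db\CohGGm(C)_{\le a}\otimes\Db\CohGGm(C)_{\le b}\subseteq\Db\CohGGm(C)_{\le a+b}$, which does hold (good filtrations are preserved by tensor product, and good filtration dimensions add), but it is precisely the work the paper sidesteps by reducing to generators at the outset. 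On the other hand, your remark that the $\Gm$-weight decomposition of $\bigwedge^i V_{x_C}$ is only a $\Gm \ltimes G^{x_C}$-stable \emph{filtration}, with trivial $G^{x_C}_\unip$-action on the subquotients, is more careful than the paper, which simply decomposes as a $\Gm \times G^{x_C}_\red$-module; since the coaisle is extension-closed this changes nothing, but it is the correct justification. For part~(2) the paper just runs the mirror-image argument, while your deduction via Serre--Grothendieck duality, using~\eqref{eqn:serre-groth-open} and Lemma~\ref{lem:repsilt-duality} to see that $\D_C$ exchanges $\Db\CohGGm(C)_{\le m}$ and $\Db\CohGGm(C)_{\ge -m}$, is a legitimate shortcut, provided you also invoke the standard compatibility that duality exchanges the $(j_C^*, j_{C*})$ and $(j_{C*}, j_C^!)$ adjunctions, so that the counit for $\D_C\cF$ is the dual of the unit for $\cF$.
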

\begin{proof}
We will prove the first assertion; the second one is similar.  We must show that if $\cF \in \Db\CohGGm(C)_{\le 0}$, then $j_C^!j_{C*}\cF \in \Db\CohGGm(C)_{\le 0}$.  Since $\Db\CohGGm(C)$ is generated under extensions by objects of the form $\cT_\omega[n]\la k\ra$ with $n+k \ge 0$, it is enough to consider the special case $\cF = \cT_\omega[n]\la k\ra$.  The adjunction map $\cF \to j_C^!j_{C*}\cF$ induces an isomorphism $\cF \simto \cH^0(j_C^!j_{C*}\cF)$.  Thus, to prove the lemma, it is enough to show that the higher cohomology sheaves $\cH^i(j_C^!j_{C*}\cF)[-i]$ with $i \ge 1$ lie in $\Db\CohGGm(C)_{\le -1}$.  By  Corollary~\ref{cor:jc-pushpull}, we have
\[
\textstyle
\cH^{i-n}(j_C^!j_{C*}\cT_\omega[n]\la k\ra) \cong \cT_\omega\la k\ra \otimes \bigwedge^i\cV_C.
\]
Thus, the lemma commes down to showing that
\begin{equation}\label{eqn:jc-unit-rep}
\textstyle
\tilt_\omega\la k\ra \otimes \bigwedge^i V_{x_C}[n-i] \in \Db\Rep(\Gm \ltimes G^{x_C})_{\le -1}
\qquad\text{for $i \ge 1$.}
\end{equation}

We will now prove~\eqref{eqn:jc-unit-rep}.  By Lemma~\ref{lem:normal}\eqref{it:normal-wts}, the $\Gm$-action on $\bigwedge^i V_{x_C}$ has weights${}\le -2i$.  Therefore, as a $\Gm \times G^{x_C}_\red$-representation, $\bigwedge^i V_{x_C}$ can be decomposed as
\[
\textstyle
\bigwedge^i V_{x_C} = \bigoplus_{j \ge 2i} N_{ij}\la j\ra
\]
where each $N_{ij}$ is some $G^{x_C}_\red$-module, regarded as a $\Gm \times G^{x_C}_\red$-module with trivial $\Gm$-action.  

By Lemma~\ref{lem:normal}\eqref{it:normal-gfd}, each $N_{ij}$ has good filtration dimension${}\le i-1$, and hence so does $\tilt_\omega \otimes N_{ij}$ (cf.~\cite[Proposition~3.4]{fp:claag}).  By Lemma~\ref{lem:gfd-co-t}, we have $\tilt_\omega\la k\ra \otimes N_{ij}\la j\ra [n-i] \in \Db\Rep(\Gm \ltimes G^{x_C})_{\le i -1 - (j+k) - (n-i)}$.  Since $j \ge 2i$ and $n+k \ge 0$, this object lies in $\Db\Rep(\Gm \ltimes G^{x_C})_{\le -1}$, as desired.
\end{proof}

\begin{prop}\label{prop:lie-extend}
For any nilpotent orbit $C \subset \fg$, there is a unique co-$t$-structure on $\Db_C\CohGGm(\fg_C)$ such that
\[
j_{C*}: \Db\CohGGm(C) \to \Db_C\CohGGm(\fg_C)
\]
is co-$t$-exact.  The indecomposable silting objects in $\Db_C\CohGGm(\fg_C)$ are precisely those of the form $j_{C*}\cT_\omega[n]\la -n\ra$ with $\omega \in \Omega_C$ and $n \in \Z$.
\end{prop}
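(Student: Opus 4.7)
The plan is to identify the coheart of the desired co-$t$-structure directly, and then invoke the standard correspondence between silting subcategories and bounded co-$t$-structures used in the proof of Proposition~\ref{prop:tilting-silt}. Concretely, let $\fS \subset \Db_C\CohGGm(\fg_C)$ be the full additive subcategory whose objects are direct sums of the complexes $j_{C*}\cT_\omega[n]\la -n\ra$ for $\omega \in \Omega_C$ and $n \in \Z$. The task reduces to checking two things: $\fS$ generates $\Db_C\CohGGm(\fg_C)$ as a triangulated category, and $\Hom(S,S'[k]) = 0$ for all $S,S' \in \fS$ and all $k > 0$.

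For generation, any bounded complex is built from its cohomology sheaves, so it suffices to treat a single $\cF \in \CohGGm(\fg_C)$ set-theoretically supported on $C$. Such an $\cF$ is annihilated by some power $\cI^N$ of the defining ideal sheaf of $C$ in $\fg_C$; filtering by $\cF \supset \cI\cF \supset \cI^2\cF \supset \cdots$ yields successive quotients annihilated by $\cI$, hence lying in the essential image of $j_{C*}$. Thus $\Db_C\CohGGm(\fg_C)$ is generated by $j_{C*}\Db\CohGGm(C)$, and since $\Silt(C)$ generates $\Db\CohGGm(C)$ by Proposition~\ref{prop:tilting-silt} (via~\eqref{eqn:orbit-equiv}) and $j_{C*}\Silt(C) \subset \fS$, generation follows.

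For the Ext-vanishing, by additivity it suffices to take $S = j_{C*}\cT_\omega[n]\la -n\ra$ and $S' = j_{C*}\cT_\upsilon[m]\la -m\ra$. The adjunction $(j_{C*}, j_C^!)$ yields
\[
\Hom(S,S'[k]) \cong \Hom\bigl(\cT_\omega[n]\la -n\ra,\; j_C^!j_{C*}\cT_\upsilon[m+k]\la -m\ra\bigr).
\]
Since $\cT_\upsilon[m]\la -m\ra \in \Silt(C) \subset \Db\CohGGm(C)_{\le 0}$ and $j_C^!j_{C*}$ is left co-$t$-exact by Lemma~\ref{lem:jc-unit-exact}, we obtain $j_C^!j_{C*}\cT_\upsilon[m+k]\la -m\ra \in \Db\CohGGm(C)_{\le -k}$. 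On the other hand, $\cT_\omega[n]\la -n\ra \in \Db\CohGGm(C)_{\ge 0}$, and so the defining orthogonality axiom of the co-$t$-structure on $\Db\CohGGm(C)$ forces the Hom to vanish whenever $k \ge 1$.

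With these two properties established, the formal argument of \cite[Proposition~2.5]{ah:cot1}---already used in the proof of Proposition~\ref{prop:tilting-silt}---produces a unique bounded co-$t$-structure on $\Db_C\CohGGm(\fg_C)$ whose coheart is $\fS$, thereby identifying the indecomposable silting objects as claimed. Co-$t$-exactness of $j_{C*}$ is then automatic: it carries the silting subcategory $\Silt(C)$ into $\fS$, and both halves of each co-$t$-structure are built from the respective silting subcategory under shifts and extensions (compare Remark~\ref{rmk:weyl-cot}); uniqueness similarly follows from the observation that co-$t$-exactness of $j_{C*}$ forces the coheart of any candidate to contain $\fS$, and a bounded co-$t$-structure on a triangulated category is determined by any generating silting subcategory it contains. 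The main technical obstacle--—showing the left co-$t$-exactness of $j_C^!j_{C*}$--—has already been absorbed into Lemma~\ref{lem:jc-unit-exact}, so beyond that input the argument is essentially a routine application of the silting--coheart dictionary.
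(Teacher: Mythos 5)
Your proof is correct and follows essentially the same route as the paper: push forward the silting subcategory $\Silt(C)$, check generation and the positivity of $\Hom$'s via the adjunction $(j_{C*},j_C^!)$ together with Lemma~\ref{lem:jc-unit-exact}, and then invoke the silting--co-$t$-structure correspondence of \cite[Proposition~2.5]{ah:cot1}. The only point the paper states that you leave implicit is that the objects $j_{C*}\cT_\omega[n]\la -n\ra$ remain indecomposable (immediate from the same adjunction, since $\End(j_{C*}\cT_\omega)\cong\Hom(\cT_\omega,j_C^!j_{C*}\cT_\omega)\cong\End(\cT_\omega)$ is local by Corollary~\ref{cor:jc-pushpull}); your extra details on generation and uniqueness simply flesh out steps the paper treats as clear.
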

\begin{proof}
We wish to show that objects of the form $j_{C*}\cT$ with $\cT \in \Silt(C)$ form a silting subcategory of $\Db_C\CohGGm(\fg_C)$.  These objects clearly remain indecomposable and generate $\Db_C\CohGGm(\fg_C)$, so it remains to show that for any $\cT, \cT' \in \Silt(C)$, we have
\[
\Hom(j_{C*}\cT, j_{C*}\cT'[n]) = 0 \qquad\text{whenever $n > 0$.}
\]
This follows by adjunction and Lemma~\ref{lem:jc-unit-exact}.
\end{proof}

By construction, the co-$t$-structure obtained in Proposition~\ref{prop:lie-extend} has the following explicit description:
\begin{align*}
\Db_C\CohGGm(\fg_C)_{\ge 0} &=
\begin{array}{@{}c@{}}
\text{the full subcategory of $\Db_C\CohGGm(\fg_C)$ generated} \\
\text{under extensions by $j_{C*}\cT_\omega[n]\la k\ra$ with $n + k \le 0$}
\end{array} \\
\Db_C\CohGGm(\fg_C)_{\le 0} &=
\begin{array}{@{}c@{}}
\text{the full subcategory of $\Db_C\CohGGm(\fg_C)$ generated} \\
\text{under extensions by $j_{C*}\cT_\omega[n]\la k\ra$ with $n + k \ge 0$}.
\end{array}
\end{align*}
The coheart of this co-$t$-structure is denoted by
\[
\Silt_C(\fg_C) = \Db_C\CohGGm(\fg_C)_{\ge 0} \cap \Db_C\CohGGm(\fg_C)_{\le 0}.
\]
One can also describe this co-$t$-structure using vector bundles corresponding to Weyl or dual Weyl modules for $G^{x_C}_\red$, in analogy with Remark~\ref{rmk:weyl-cot}.

\begin{rmk}\label{rmk:lie-equiv}
One can show that the functor $j_{C*}: \Silt(C) \to \Silt_C(\fg_C)$ is full: see Lemma~\ref{lem:silt-full} below for a similar argument.
\end{rmk}

\begin{lem}\label{lem:lie-duality}
The category $\Silt_C(\fg_C) \subset \Db_C\CohGGm(\fg_C)$ is preserved by the Serre--Grothendieck duality functor.  Specifically, we have
\[
\D(j_{C*}\cT_\omega[n]\la -n\ra) \cong j_{C*}\cT_\omega^*[-\codim C -n]\la \codim C + n\ra.
\]
\end{lem}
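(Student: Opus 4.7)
The plan is to reduce the claim to the corresponding computation on the orbit $C$ itself, namely Lemma~\ref{lem:repsilt-duality}\eqref{it:repsilt-sg}, by transporting the Serre--Grothendieck duality through the closed immersion $j_C$. The essential ingredient is the compatibility
\[
\D \circ j_{C*} \cong j_{C*} \circ \D
\]
recorded in~\eqref{eqn:serre-groth-open}, where the $\D$ on the right is the duality functor $\D_C$ on $\Db\CohGGm(C)$ defined via the dualizing complex $\cO_C[-\codim C]\la \codim C\ra$.

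Applying this compatibility to the object $\cT_\omega[n]\la -n\ra$ and then invoking Lemma~\ref{lem:repsilt-duality}\eqref{it:repsilt-sg} to evaluate the inner duality would immediately yield the displayed formula
\[
\D(j_{C*}\cT_\omega[n]\la -n\ra) \cong j_{C*}\cT_\omega^*[-\codim C - n]\la \codim C + n\ra.
\]
For the preservation statement, I would observe that the right-hand side is again of the form $j_{C*}\cT_{\omega^*}[m]\la -m\ra$ with $m = -\codim C - n$, and so by the characterization in Proposition~\ref{prop:lie-extend} it is an indecomposable silting object in $\Silt_C(\fg_C)$. Since $\D$ is an equivalence of triangulated categories and carries the indecomposable generators of $\Silt_C(\fg_C)$ back into $\Silt_C(\fg_C)$, the whole additive subcategory is preserved.

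I do not foresee any serious obstacle: the argument is a two-step concatenation of results already in place. The only point meriting a line of justification is that the $\D$ acting on $\Db_C\CohGGm(\fg_C)$, defined as the restriction of $\D_\fg$, is genuinely Grothendieck-dual under $j_C^!$ to the $\D_C$ of Lemma~\ref{lem:repsilt-duality}. This in turn reduces to the identification $j_C^!(\cO_\fg[\rank G]\la -\rank G\ra) \cong \cO_C[-\codim C]\la \codim C\ra$, which follows by composing the isomorphisms $j^!\cO_\fg[\rank G]\la -\rank G\ra \cong \cO_\cN$ and $i_C^!\cO_\cN \cong \cO_C[-\codim C]\la \codim C\ra$ already noted preceding~\eqref{eqn:serre-groth-compat}.
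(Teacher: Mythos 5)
Your proposal is correct and coincides with the paper's own argument: the paper also deduces the lemma directly from the compatibility $\D \circ j_{C*} \cong j_{C*} \circ \D$ of~\eqref{eqn:serre-groth-open} together with the orbit computation in Lemma~\ref{lem:repsilt-duality}\eqref{it:repsilt-sg}. Your extra remark identifying $j_C^!(\cO_\fg[\rank G]\la -\rank G\ra) \cong \cO_C[-\codim C]\la \codim C\ra$ by composing the two isomorphisms preceding~\eqref{eqn:serre-groth-compat} is exactly the justification the paper leaves implicit, so nothing is missing.
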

\begin{proof}
This is immediate from~\eqref{eqn:serre-groth-open} and Lemma~\ref{lem:repsilt-duality}.
\end{proof}

\section{Nilpotent orbits embedded in the nilpotent cone}
\label{sec:nil-extend}

Let $C \subset \fg$ be a nilpotent orbit, and let $U \subset \cN$ be a $G$-stable open subset such that $C$ is closed as a subset of $U$.  The goal of this section is to show that the co-$t$-structure~\eqref{eqn:siltc-defn} on  $C$ extends to a co-$t$-structure on infinitesimal neighborhoods of $C$ in $U$.  

Throughout this section, we let
\[
i_C: C \hookrightarrow U
\qquad\text{and}\qquad
j: U \hookrightarrow \fg_C
\]
be the inclusion maps.  Here, $i_C$ is a closed immersion, and $j$ is a locally closed immersion.  In general, $j_*$ takes values in the derived category $D^+\QCoh^{G \times \Gm}(\fg_C)$ of \emph{quasicoherent} sheaves on $\fg_C$.  However, in the important special case where $U = \cN_C$, $j$ is a closed immersion, and $j_*$ sends $\Db\CohGGm(\cN_C)$ to $\Db\CohGGm(\fg_C)$.

\begin{lem}\label{lem:complete-int}
There is a collection of positive integers $n_1, \ldots, n_r$ such that for any $\cF \in \CohGGm(U)$, we have
\begin{align*}
\cH^k(j^*j_*\cF) &\cong \bigoplus_{\{i_1, \ldots, i_k\} \subset \{1, \ldots, r\}} \cF\la -2(n_{i_1} + \cdots + n_{i_k})\ra, \\
\cH^k(j^!j_*\cF) &\cong \bigoplus_{\{i_1, \ldots, i_k\} \subset \{1, \ldots, r\}} \cF\la 2(n_{i_1} + \cdots + n_{i_k})\ra.
\end{align*}
\end{lem}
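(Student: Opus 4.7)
The plan is to reduce to a closed immersion and then exploit the complete intersection structure of $\cN$ in $\fg$ via a Koszul resolution.

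Since $U$ is open in $\cN$ and $C$ is closed in $U$, one has $U \subset \cN_C$; moreover $U$ is closed in the open subset $V := \fg_C \smallsetminus (\cN_C \smallsetminus U)$ of $\fg_C$. Thus $j$ factors as $U \xrightarrow{\,j_V\,} V \xrightarrow{\,\iota\,} \fg_C$, where $j_V$ is a closed immersion and $\iota$ an open immersion. Since $\iota^*\iota_* \cong \mathrm{id}$ and $\iota^!\iota_* \cong \mathrm{id}$ for $\iota$ open, we obtain $j^*j_* \cong j_V^*j_{V*}$ and $j^!j_* \cong j_V^!j_{V*}$, reducing the problem to the closed immersion $j_V$ into the smooth variety $V$.

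The next step is to invoke the complete intersection structure of $\cN$ in $\fg$, valid in pretty good characteristic: the ring of $G$-invariants $\bk[\fg]^G$ is polynomial on $r = \rank G$ homogeneous generators $p_1, \ldots, p_r$ (Chevalley's theorem in good characteristic), and $(p_1, \ldots, p_r)$ is a regular sequence in $\bk[\fg]$ that cuts out $\cN$ scheme-theoretically (a classical result of Kostant, extended to pretty good characteristic by Demazure and Veldkamp). Set $n_i := \deg p_i$; since $\Gm$ acts on $\fg$ with weight $-2$, each $p_i$ is a $\Gm$-weight vector of weight $2n_i$, and multiplication by $p_i$ is a degree-preserving equivariant map $\cO_V\la -2n_i\ra \to \cO_V$. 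Regularity and the scheme-theoretic vanishing locus are local, so $(p_1|_V, \ldots, p_r|_V)$ remains a regular sequence and cuts out $U$ in $V$; hence the Koszul complex
\[
0 \to \textstyle\bigwedge^r E \to \cdots \to \bigwedge^1 E \to \cO_V \to j_{V*}\cO_U \to 0, \qquad E := \bigoplus_{i=1}^r \cO_V\la -2n_i\ra,
\]
is a $(G \times \Gm)$-equivariant locally free resolution of $j_{V*}\cO_U$, with $\bigwedge^k E \cong \bigoplus_{|S|=k} \cO_V\la -2\sum_{i \in S} n_i\ra$.

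For the first formula, compute $j_V^*j_{V*}\cF$ as $j_V^*\bigl(\bigwedge^\bullet E \otimes_{\cO_V} j_{V*}\cF\bigr)$. The differentials of $\bigwedge^\bullet E$ are built from multiplication by the $p_i$, which act as zero on any $\cO_U$-module; consequently, after $j_V^*$ the restricted complex has vanishing differentials, and its cohomology sheaves coincide with its terms, giving the claimed description. The second formula follows dually: represent $j_V^!j_{V*}\cF$ via $j_V^*R\cHom_{\cO_V}\bigl(\bigwedge^\bullet E, j_{V*}\cF\bigr)$, noting that dualizing the Koszul complex reverses its direction and converts each shift $\la -2\sum n_i\ra$ into $\la +2\sum n_i\ra$; the same vanishing-of-differentials argument then yields the result.

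The crucial input is the complete intersection property of $\cN \subset \fg$ in pretty good characteristic; once that is invoked, the rest is bookkeeping inside the Koszul complex, and no serious obstacle arises. The minor point to track is compatibility with the paper's $\Gm$-weight conventions, ensuring the shift $\la -2n_i\ra$ placed on each generator reproduces the signs appearing in the lemma.
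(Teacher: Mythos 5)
Your proof is correct and follows essentially the same route as the paper: both rest on the complete-intersection presentation $\bk[\cN] \cong \Sym(\fg^*)/\la f_1,\dots,f_r\ra$ by homogeneous invariants (cited in the paper to Jantzen's \emph{Nilpotent orbits}, \S 7.13, under the standing ``pretty good'' hypotheses) and the resulting $G\times\Gm$-equivariant Koszul resolution, whose differentials vanish after restriction to the nilpotent locus, so the cohomology sheaves are just the (twisted) terms. The only cosmetic difference is the reduction to a closed immersion: the paper extends $\cF$ to an object of $\CohGGm(\cN)$ and works with $\cN \hookrightarrow \fg$, whereas you shrink the ambient space to the open subset $V \subset \fg_C$ and use $\iota^*\iota_* \cong \id \cong \iota^!\iota_*$ for the open immersion $\iota$ — both reductions are valid.
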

\begin{proof}
Since any $\cF \in \CohGGm(U)$ can be extended to an object of $\CohGGm(\cN)$, it is enough to prove the lemma in the special case where $C$ is the zero orbit, so that $U = \cN$.  We will work on $\cN$ from now on, and let $j: \cN \hookrightarrow \fg$ be the inclusion map.

The $\Gm$-action on $\fg$ equips its coordinate ring $\bk[\fg] = \Sym(\fg^*)$ with a grading in nonnegative, even degrees.  As in \cite[\S 7.13]{jan:nort}, let $f_1,\dots, f_r$ denote a minimal set of homogeneous generators of the $\bk$-subalgebra $\Sym(\fg^*)^G$.  These generators have strictly positive, even degrees. By \cite[Proposition~7.13]{jan:nort}, we have $r = \codim_{\fg}(\cN)$ and 
\[
\bk[\cN] \cong \Sym(\fg^*)/\Sym(\fg^*)\la f_1,\dots, f_r\ra
\]
 as a $G\times \Gm$-equivariant algebra.  As a consequence, $\cN$ is a complete intersection. Let $V = \la f_1,\dots, f_r \ra$ denote their
 $\bk$-span, and note that $V$ is trivial as a $G$-module and has strictly positive $\Gm$-weights, thus as a $G\times \Gm$-module, 
 \[
 V \cong \bigoplus_{j=1,\dots,r} \bk\la-2n_j\ra,
 \] 
where $n_j = \frac{1}{2}\deg f_j \ > 0$ for all $j$.

There is a $G\times \Gm$-equivariant Koszul resolution 
\begin{equation}\label{eqn:complete-int-res}
 \textstyle
   0 \rightarrow \cO_\fg \otimes \bigwedge^r(V) \rightarrow \cdots  \rightarrow  \cO_\fg\otimes \bigwedge^2(V)  \rightarrow  \cO_\fg\otimes V\rightarrow  \cO_\fg \rightarrow  j_*\cO_\cN \rightarrow 0.
 \end{equation}

As in the proof of Corollary~\ref{cor:jc-pushpull}, to prove the present lemma, it is enough to compute the cohomology sheaves of the objects
\[
j_*j^*j_*\cF \cong j_*\cO_\cN \lotimes j_*\cF
\qquad\text{and}\qquad
j_*j^!j_*\cF \cong R\cHom(j_*\cO_\cN, j_*\cF).
\]
Both of these can computed using the resolution~\eqref{eqn:complete-int-res}.  The differentials in that resolution are defined in terms of multiplication by one of the $f_i$'s, which vanish on $\cN$.  Thus, after applying $({-}) \otimes j_*\cF$ or $\cHom({-},j_*\cF)$, the differentials become zero.  We conclude that
\[
\textstyle
\cH^i(j_*j^*j_*\cF) \cong \bigwedge^{-i} V \otimes j_*\cF,
\qquad
\cH^i(j_*j^!j_*\cF) \cong \bigwedge^i V^* \otimes j_*\cF,
\]
and the result follows.
\end{proof}

\begin{prop}\label{prop:nil-extend}
Let $U \subset \cN$ be a $G$-stable open subset, and let $i_C: C \hookrightarrow U$ be the inclusion of a nilpotent orbit that is closed in $U$.  There is a unique co-$t$-structure on $\Db_C\CohGGm(U)$ such that
\[
i_{C*}: \Db\CohGGm(C) \to \Db_C\CohGGm(U)
\]
is co-$t$-exact.  The indecomposable silting objects in $\Db_C\CohGGm(U)$ are precisely those of the form $i_{C*}\cT_\omega[n]\la -n\ra$ with $\omega \in \Omega_C$ and $n \in \Z$.
\end{prop}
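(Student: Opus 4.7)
The plan is to reduce this proposition to its Lie algebra counterpart, Proposition~\ref{prop:lie-extend}, via the commutative triangle
\[
\begin{tikzcd}
C \ar[r, "i_C"] \ar[rd, "j_C"'] & U \ar[d, "j"] \\
& \fg_C.
\end{tikzcd}
\]
The main technical claim I need, and the step I expect to be the principal obstacle, is that although $j$ is only a locally closed immersion, its derived pushforward restricts to a fully faithful functor
\[
Rj_* : \Db_C\CohGGm(U) \to \Db_C\CohGGm(\fg_C),
\]
compatibly with $i_{C*}$ in the sense that $Rj_* \circ i_{C*} \cong j_{C*}$.

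To establish this, I will factor $j$ as $U \xrightarrow{k} \cN_C \xrightarrow{\iota} \fg_C$, where $k$ is an open immersion and $\iota$ is a closed immersion. At the level of quasicoherent sheaves, the identity $j^*j_* \cong k^*\iota^*\iota_*k_* \cong \id$ is formal. The content is to verify that $Rj_*$ preserves coherence, boundedness, and set-theoretic support on $C$. The key input is that since $j \circ i_C = j_C$ is a closed immersion, for any $\cF' \in \CohGGm(C)$ we have $Rj_* i_{C*} \cF' \cong j_{C*} \cF'$, which is coherent and concentrated in degree $0$. Devissage by powers of the ideal of $C$ then shows that $R^p j_* \cF = 0$ for $p > 0$ and any $\cF \in \CohGGm(U)$ set-theoretically supported on $C$, so $Rj_*$ lands in $\Db_C\CohGGm(\fg_C)$; combining this with the exactness of $j^*$ extends the identity $j^*j_* \cong \id$ to the derived level on $\Db_C\CohGGm(U)$, yielding the claimed full faithfulness.

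With full faithfulness in hand, the rest of the argument parallels the proof of Proposition~\ref{prop:lie-extend}. For $\cT, \cT' \in \Silt(C)$ and $n > 0$,
\[
\Hom_U(i_{C*}\cT, i_{C*}\cT'[n]) \cong \Hom_{\fg_C}(j_{C*}\cT, j_{C*}\cT'[n]),
\]
and this vanishes by Proposition~\ref{prop:lie-extend}. Generation of $\Db_C\CohGGm(U)$ by $i_{C*}\Db\CohGGm(C)$ again follows from the ideal filtration of a sheaf set-theoretically supported on $C$. Consequently, the full additive subcategory of direct sums of objects $i_{C*}\cT_\omega[n]\la -n\ra$ with $\omega \in \Omega_C$ and $n \in \Z$ is a silting subcategory of $\Db_C\CohGGm(U)$, which is therefore the coheart of a unique co-$t$-structure with the stated properties, and the description of the indecomposable silting objects follows immediately.
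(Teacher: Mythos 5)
The central claim of your proposal---that $Rj_*$ restricts to a fully faithful functor $\Db_C\CohGGm(U) \to \Db_C\CohGGm(\fg_C)$---is false, and the argument you give for it conflates the underived pullback $j^*$ with its derived functor. The part of your argument that is fine is the assertion that $Rj_*$ preserves coherence, boundedness, and support (indeed $Rj_*i_{C*} \cong j_{C*}$, and d\'evissage shows $Rj_*$ is exact on sheaves supported on $C$). But full faithfulness would require the counit $Lj^*Rj_* \to \id$ to be an isomorphism on $\Db_C\CohGGm(U)$, and here $Lj^*$ is the \emph{derived} pullback along the closed immersion $\cN_C \hookrightarrow \fg_C$ (composed with an open restriction), which is only right exact: its higher terms on objects pushed forward from $\cN_C$ are computed by the Koszul complex of the complete intersection $\cN \subset \fg$ and are nonzero in degrees $-1,\dots,-r$ with $r = \codim_\fg\cN$. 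This is exactly the content of Lemma~\ref{lem:complete-int}, which shows $\cH^{-i}(j^*j_*i_{C*}\cT_\omega)$ is a nonzero sum of twists $i_{C*}\cT_\omega\la -n_{ij}\ra$ for $1 \le i \le r$. Concretely, full faithfulness fails already for $C = \{0\}$, $U = \cN$ and skyscraper sheaves: for $G = \mathrm{SL}_2$ one has $\Ext^n_{\cO_\cN}(\bk_0,\bk_0) \ne 0$ for all $n \ge 0$ (the residue field at the singular point has infinite projective dimension over $\bk[\cN]$), while $\Ext^n_{\cO_\fg}(\bk_0,\bk_0)$ vanishes for $n > \dim\fg$. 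So the comparison map $\Hom_U(i_{C*}\cT,i_{C*}\cT'[n]\la k\ra) \to \Hom_{\fg_C}(j_{C*}\cT,j_{C*}\cT'[n]\la k\ra)$ is in general neither injective nor surjective, and the vanishing on $U$ cannot be formally imported from Proposition~\ref{prop:lie-extend}.

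What saves the statement---and what the paper's proof actually does---is that the failure of $Lj^*Rj_* \cong \id$ is controlled: the correction terms carry internal twists $\la -n_{ij}\ra$ with $n_{ij} \ge 2i$, pushing them into the ``safe'' region $n+k \le 0$. One considers the truncation triangle $\tau^{\le -1}(j^*j_*i_{C*}\cT_\omega) \to j^*j_*i_{C*}\cT_\omega \to i_{C*}\cT_\omega$, uses the adjunction plus Proposition~\ref{prop:lie-extend} to kill the middle term's contribution, and runs a minimality argument (on the cohomological shift $n$, for the pair $(k,n)$ with $n+k>0$) showing the truncated piece cannot contribute either, precisely because $n_{ij} \ge i+1$. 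So the extra $\Ext$'s on $U$ exist but only in degrees where the silting condition does not see them. Your reduction cannot be repaired by a cleaner categorical statement; some version of this Koszul bookkeeping is needed. A minor additional point: since the silting subcategory is closed under $[n]\la -n\ra$, the vanishing you must verify is $\Hom(i_{C*}\cT_\omega, i_{C*}\cT_\upsilon[n]\la k\ra) = 0$ for all $n + k > 0$, not merely for $n > 0$ with $k = 0$.
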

\begin{proof}
As in the proof of Proposition~\ref{prop:lie-extend}, we will be done if we can show that for any $\omega$, $\upsilon \in \Omega_C$, 
\begin{equation}\label{eqn:tilt-hom}
\Hom(i_{C*}\cT_\omega, i_{C*}\cT_\upsilon\langle k \rangle[n]) = 0\qquad\text{if $n > -k$.}
\end{equation}
 We have the maps $i_C: C \hookrightarrow U$, $j: U \hookrightarrow \fg_C$ and 
$j_C = j \circ i_C: C \hookrightarrow \fg_C$. 
We can already deduce from Proposition~\ref{prop:lie-extend} that 
\begin{equation}\label{eqn:push-pull-vanish}
\Hom(j^*j_*i_{C*}\cT_\omega, i_{C*}\cT_\upsilon\langle k \rangle[n]) = \Hom(j_{C*}\cT_\omega, j_{C*}\cT_\upsilon\langle k \rangle[n]) =0 \qquad\text{if $n > -k$.}
\end{equation}
Let $\cK = \tau^{\le -1} j^*j_*i_{C*}\cT_\omega$.  Since $\cH^0(j^*j_*i_{C*}\cT_\omega) \cong i_{C*}\cT_\omega$, we have a truncation distinguished triangle
\begin{equation}\label{eqn:trunc-dt}
\cK \to j^*j_*i_{C*}\cT_\omega \to i_{C*} \cT_\omega \to.
\end{equation}
Lemma~\ref{lem:complete-int} implies that for $1 \leq i \leq r$, 
\begin{equation}\label{eqn:trunc-dt-coh}
\cH^{-i}(\cK) = \cH^{-i}(j^*j_*i_{C*}\cT_\omega) \cong \bigoplus_{j =1}^{\binom{r}{i}} i_{C*}\cT_\omega\la -n_{ij} \ra,
\end{equation}
where $n_{ij} \geq 2i$ for all $i$, $j$.  

To prove~\eqref{eqn:tilt-hom}, suppose instead that this $\Hom$-group is nonzero for some integers $k$, $n$ with $n > -k$. Moreover, assume that our pair $(k,n)$ is chosen with $n$ minimal.  (This is possible since this $\Hom$-group automatically vanishes for $n < 0$.)  Apply $\Hom({-}, i_{C*}\cT_\upsilon\langle k \rangle[n])$ to~\eqref{eqn:trunc-dt} to obtain a long exact sequence
\begin{multline*}
\dots \to \Hom(\cK, i_{C*}\cT_\upsilon\langle k \rangle[n-1]) \xrightarrow{f} \Hom(i_{C*}\cT_\omega, i_{C*}\cT_\upsilon\langle k \rangle[n])  \\
\to \Hom(j^*j_*i_{C*}\cT_\omega, i_{C*}\cT_\upsilon\langle k \rangle[n])  \to \dots.
\end{multline*}
The last term vanishes by~\eqref{eqn:push-pull-vanish}, so $f$ is surjective, and hence $\Hom(\cK, i_{C*}\cT_\upsilon\langle k \rangle[n-1]) \ne 0$.

On the other hand, for $i = 1,\ldots, r$, we have
\[
\Hom(\cH^{-i}(\cK)[i], i_{C*}\cT_\upsilon\langle k\rangle[n-1])
\cong \bigoplus_{j=1}^{\binom{r}{i}} \Hom(i_{C*}\cT_\omega, i_{C*}\cT_\upsilon\langle k + n_{ij}\rangle[n-1-i]).
\]
Since $n_{ij} \ge 2i \ge i+1$ for every term here, we have $(n-1-i) > -(k+n_{ij})$.  If one of the $\Hom$-groups above is nonzero, that would contradict the minimality of $n$ in our pair $(k,n)$.  So we must have $\Hom(\cH^{-i}(\cK)[i], i_{C*}\cT_\upsilon\langle k\rangle[n-1]) = 0$ for all $i$.  From this, it is easily deduced that $\Hom(\cK, i_{C*}\cT_\upsilon\langle k\rangle[n-1]) = 0$, contradicting the previous paragraph.
\end{proof}

As usual, the co-$t$-structure obtained above can be described as follows:
\begin{align*}
\Db_C\CohGGm(U)_{\ge 0} &=
\begin{array}{@{}c@{}}
\text{the full subcategory of $\Db_C\CohGGm(U)$ generated} \\
\text{under extensions by $i_{C*}\cT_\omega[n]\la k\ra$ with $n + k \le 0$}
\end{array} \\
\Db_C\CohGGm(U)_{\le 0} &=
\begin{array}{@{}c@{}}
\text{the full subcategory of $\Db_C\CohGGm(U)$ generated} \\
\text{under extensions by $i_{C*}\cT_\omega[n]\la k\ra$ with $n + k \ge 0$}
\end{array}
\end{align*}
The coheart of this co-$t$-structure is denoted by
\[
\Silt_C(U) = \Db_C\CohGGm(U)_{\ge 0} \cap \Db_C\CohGGm(U)_{\le 0}.
\]

\begin{lem}\label{lem:nilp-duality}
The category $\Silt_C(U) \subset \Db_C\CohGGm(U)$ is preserved by the Serre--Grothendieck duality functor.  Specifically, we have
\[
\D(i_{C*}\cT_\omega[n]\la -n\ra) \cong i_{C*}\cT_\omega^*[-\codim C -n]\la \codim C + n\ra.
\]
\end{lem}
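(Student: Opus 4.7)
The proof will mirror that of Lemma~\ref{lem:lie-duality} almost verbatim, with the role of the open inclusion $j_C$ replaced by the closed inclusion $i_C: C \hookrightarrow U$. The two essential inputs are already in place: the commutation relation~\eqref{eqn:serre-groth-orbit}, which gives $\D \circ i_{C*} \cong i_{C*} \circ \D$ for $i_C$ a closed inclusion of an orbit into a $G$-stable open subset of $\cN$, and the explicit formula for Serre--Grothendieck duality on $\Silt(C)$ supplied by Lemma~\ref{lem:repsilt-duality}\eqref{it:repsilt-sg}.

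Given an indecomposable silting object $i_{C*}\cT_\omega[n]\la -n\ra$, I would apply these in succession:
\[
\D\bigl(i_{C*}\cT_\omega[n]\la -n\ra\bigr)
\;\cong\; i_{C*}\D_C\bigl(\cT_\omega[n]\la -n\ra\bigr)
\;\cong\; i_{C*}\cT_\omega^*[-\codim C -n]\la \codim C + n\ra,
\]
where the first isomorphism is~\eqref{eqn:serre-groth-orbit} and the second is Lemma~\ref{lem:repsilt-duality}\eqref{it:repsilt-sg}. This is precisely the formula asserted in the lemma.

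To deduce that $\Silt_C(U)$ is preserved by $\D$, observe that the object on the right-hand side is again one of the indecomposables classified in Proposition~\ref{prop:nil-extend}: setting $m = -\codim C - n$, it has the form $i_{C*}\cT_{\omega^*}[m]\la -m\ra$ where $\cT_{\omega^*} := \cT_\omega^*$ is the indecomposable tilting vector bundle dual to $\cT_\omega$. Since every object of $\Silt_C(U)$ is a finite direct sum of such indecomposables, and $\D$ is additive, preservation of $\Silt_C(U)$ follows at once. There is no real obstacle in this argument — all of the substantive work was carried out upstream in Lemma~\ref{lem:repsilt-duality} and in the discussion leading to~\eqref{eqn:serre-groth-orbit}.
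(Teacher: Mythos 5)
Your argument is exactly the paper's: the paper proves this lemma by citing precisely~\eqref{eqn:serre-groth-orbit} and Lemma~\ref{lem:repsilt-duality}, which are the two inputs you combine. Your additional remark that the result $i_{C*}\cT_\omega^*[-\codim C-n]\la\codim C+n\ra$ is again an indecomposable silting object of the form classified in Proposition~\ref{prop:nil-extend} is a correct (and harmless) spelling-out of why the coheart is preserved.
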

\begin{proof}
This is immediate from~\eqref{eqn:serre-groth-orbit} and Lemma~\ref{lem:repsilt-duality}.
\end{proof}

We can extract the following corollary from the proof of Proposition~\ref{prop:nil-extend}.

\begin{cor}
\phantomsection\label{cor:j-adj-strict}
\begin{enumerate}
\item If $\cF \in \Db_C\CohGGm(U)_{\le 0}$, then the cone of the adjunction map $\cF \to j^!j_*\cF$ lies in $\Db_C\CohGGm(U)_{\le -1}$.
\item If $\cF \in \Db_C\CohGGm(U)_{\ge 0}$, then the cocone of the adjunction map $j^*j_*\cF \to \cF$ lies in $\Db_C\CohGGm(U)_{\ge 1}$.
\end{enumerate}
\end{cor}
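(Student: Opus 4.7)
The two parts are proved symmetrically. The plan is to reduce to a single generating object and then read off the conclusion from Lemma~\ref{lem:complete-int}, essentially repackaging the computation carried out in the proof of Proposition~\ref{prop:nil-extend}.

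First, since $\Db_C\CohGGm(U)_{\le 0}$ and $\Db_C\CohGGm(U)_{\ge 0}$ are closed under extensions, and since $j^!j_*$ and $j^*j_*$ are triangulated, the octahedral axiom implies that the operation ``cone of $\cF \to j^!j_*\cF$'' (resp.\ ``cocone of $j^*j_*\cF \to \cF$'') sends distinguished triangles to distinguished triangles. Hence it suffices to verify the statements for a single generator: $\cF = i_{C*}\cT_\omega[n]\la k\ra$ with $n+k \ge 0$ for part~(1), or with $n+k \le 0$ for part~(2).

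Second, for such an $\cF$, Lemma~\ref{lem:complete-int} applied to $i_{C*}\cT_\omega\la k\ra \in \CohGGm(U)$ shows that $j^!j_*\cF$ has cohomology in degrees $-n,\ldots,-n+r$ and $j^*j_*\cF$ has cohomology in degrees $-n-r,\ldots,-n$, with $\cH^{-n}$ equal to $i_{C*}\cT_\omega\la k\ra$ in both cases. The adjunction maps $\cF \to j^!j_*\cF$ and $j^*j_*\cF \to \cF$ are isomorphisms on this extremal cohomology sheaf (they coincide with the natural truncation maps after the identification above), so the cone of the first is $\tau^{\ge -n+1}(j^!j_*\cF)$ and the cocone of the second is $\tau^{\le -n-1}(j^*j_*\cF)$. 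Using the canonical filtration by shifted cohomology sheaves, each of these is built by extensions from pieces
\[
i_{C*}\cT_\omega[n-i]\la k + 2n_S \ra \quad (\text{part~1}), \qquad i_{C*}\cT_\omega[n+i]\la k - 2n_S \ra \quad (\text{part~2}),
\]
where $1 \le i \le r$, $S \subset \{1,\ldots,r\}$ has $|S|=i$, and $n_S = \sum_{s \in S} n_s$.

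Third, each $n_s$ is a positive integer since $2n_s = \deg f_s$ is a positive even integer, hence $n_S \ge i$. For part~(1), the exponents satisfy $(n-i) + (k + 2n_S) \ge (n+k) + i \ge 1$, so each piece lies in $\Db_C\CohGGm(U)_{\le -1}$; for part~(2), $(n+i) + (k - 2n_S) \le (n+k) - i \le -1$, so each piece lies in $\Db_C\CohGGm(U)_{\ge 1}$. Closure of these co-$t$-aisles under extensions then yields both assertions. The key estimate $n_S \ge |S|$ was already the workhorse in the proof of Proposition~\ref{prop:nil-extend}; I anticipate no substantive obstacle beyond careful bookkeeping between cohomological shifts and Tate twists.
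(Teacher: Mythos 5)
Your argument is correct and is essentially the paper's own proof: the paper likewise reduces to the generators $i_{C*}\cT_\omega[n]\la k\ra$, identifies the (co)cone of the adjunction map with the truncation $\cK$ of $j^*j_*\cF$ (resp.\ $j^!j_*\cF$) via the triangle~\eqref{eqn:trunc-dt}, and concludes from the cohomology computation~\eqref{eqn:trunc-dt-coh} coming from Lemma~\ref{lem:complete-int}, where your estimate $2n_S \ge 2|S|$ is exactly the paper's $n_{ij} \ge 2i$. Your explicit octahedral/extension-closure reduction and the bookkeeping of shifts and twists match what the paper leaves implicit, so there is nothing to correct.
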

\begin{proof}
We will prove the second assertion; the first one is similar.  It is enough to consider the special case $\cF = i_{C*}\cT_\omega[n]\la k\ra$ with $n+k \le 0$.  We most show that the object $\cK$ in~\eqref{eqn:trunc-dt} lies in $\Db_C\CohGGm(U)_{\ge 1}$.  This follows from~\eqref{eqn:trunc-dt-coh} and the fact that $-i-n_{ij} \le -1$ for all $i \ge 1$.
\end{proof}

The next two statements involve the special case $U = \cN_C$, where $j$ is a closed immersion.

\begin{lem}\label{lem:silt-full}
If $\cF \in \Db_C\CohGGm(\cN_C)_{\ge 0}$ and $\cG \in \Db_C\CohGGm(\cN_C)_{\le 0}$, then the natural map
\[
\Hom(\cF,\cG) \to \Hom(j_*\cF, j_*\cG)
\]
is surjective.  In particular, $j_*$ restricts to a full functor $\Silt_C(\cN_C) \to \Silt_C(\fg_C)$.
\end{lem}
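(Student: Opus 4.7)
The plan is to use the $(j_*, j^!)$-adjunction (available because $j \colon \cN_C \hookrightarrow \fg_C$ is a closed immersion) to reduce to a statement about $\Hom$ from $\cF$ into the cone of the unit $\cG \to j^!j_*\cG$. Specifically, the natural map in the statement fits into a commutative square
\[
\Hom(\cF,\cG) \to \Hom(\cF, j^!j_*\cG) \simto \Hom(j_*\cF, j_*\cG),
\]
where the right-hand isomorphism is adjunction and the left-hand map is induced by the unit $\eta \colon \cG \to j^!j_*\cG$. So to prove surjectivity of the composition, it suffices to prove surjectivity of the first arrow.

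First, I would fix a distinguished triangle
\[
\cG \to j^!j_*\cG \to C_\cG \to
\]
and apply $\Hom(\cF, -)$ to obtain the long exact sequence
\[
\cdots \to \Hom(\cF,\cG) \to \Hom(\cF, j^!j_*\cG) \to \Hom(\cF, C_\cG) \to \cdots.
\]
Thus it is enough to show $\Hom(\cF, C_\cG) = 0$. Since $\cG \in \Db_C\CohGGm(\cN_C)_{\le 0}$, Corollary~\ref{cor:j-adj-strict}(1) guarantees that $C_\cG \in \Db_C\CohGGm(\cN_C)_{\le -1}$. Combined with $\cF \in \Db_C\CohGGm(\cN_C)_{\ge 0}$, the orthogonality axiom of the co-$t$-structure (or equivalently the defining silting $\Ext$-vanishing conditions from Proposition~\ref{prop:nil-extend}) yields $\Hom(\cF, C_\cG) = 0$, which is exactly what we need.

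For the ``in particular'' claim, I would note that if $\cF \in \Silt_C(\cN_C)$, then up to shifts and Tate twists in the form $\cT_\omega[n]\la -n\ra$ it is a direct summand of a sum of objects $i_{C*}\cT_\omega[n]\la -n\ra$, and so $j_*\cF$ is a direct summand of a sum of objects $j_{C*}\cT_\omega[n]\la -n\ra$, placing $j_*\cF$ in $\Silt_C(\fg_C)$ by Proposition~\ref{prop:lie-extend}. The surjectivity established above then says exactly that $j_* \colon \Silt_C(\cN_C) \to \Silt_C(\fg_C)$ is a full functor. I expect no serious obstacle here: the argument is essentially a formal consequence of adjunction together with Corollary~\ref{cor:j-adj-strict}; the only point requiring a moment's care is the convention-tracking between the shift in the cone of $\eta$ and the orthogonality direction of the co-$t$-structure, which is settled by the descriptions of $\Db_C\CohGGm(\cN_C)_{\ge 0}$ and $\Db_C\CohGGm(\cN_C)_{\le 0}$ recorded after Proposition~\ref{prop:nil-extend}.
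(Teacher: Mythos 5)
Your argument is correct and is essentially the paper's own proof: both pass through the $(j_*,j^!)$-adjunction, apply $\Hom(\cF,-)$ to the triangle on the unit $\cG \to j^!j_*\cG$, and kill the third term using Corollary~\ref{cor:j-adj-strict}(1) together with the orthogonality $\Hom\bigl(\Db_C\CohGGm(\cN_C)_{\ge 0}, \Db_C\CohGGm(\cN_C)_{\le -1}\bigr)=0$. The ``in particular'' step, which the paper leaves implicit, is handled correctly via $j_* i_{C*} = j_{C*}$ and the descriptions of the indecomposable silting objects in Propositions~\ref{prop:nil-extend} and~\ref{prop:lie-extend}.
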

\begin{proof}
Let $\cK$ be the cone of the adjunction map $\cG \to j^!j_*\cG \to \cK \to$.  We have a long exact sequence
\[
\cdots \to \Hom(\cF,\cG) \to \Hom(\cF, j^!j_*\cG) \to \Hom(\cF,\cK) \to \cdots.
\]
By Corollary~\ref{cor:j-adj-strict}, we have $\cK \in \Db_C\CohGGm(\cN_C)_{\le -1}$, so the last term vanishes.  The lemma follows.
\end{proof}

\begin{cor}\label{cor:jc-co-t-exact}
The functor
\[
j_*: \Db_C\CohGGm(\cN_C) \to \Db_C\CohGGm(\fg_C)
\]
is co-$t$-exact.  The functors
\[
j^*, j^!: \Db_C\CohGGm(\fg_C) \to \Db_C\CohGGm(\cN_C)
\]
are right and left co-$t$-exact, respectively.
\end{cor}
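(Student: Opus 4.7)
The plan is to first show that $j_*$ is co-$t$-exact by verifying it sends silting generators to silting generators, and then deduce the statements about $j^*$ and $j^!$ from a standard adjunction duality for co-$t$-structures.

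The key observation is the factorization $j_{C*} = j_* \circ i_{C*}$, which gives for each $\omega \in \Omega_C$ and $n, k \in \Z$,
\[
j_*\bigl(i_{C*}\cT_\omega[n]\la k\ra\bigr) \cong j_{C*}\cT_\omega[n]\la k\ra.
\]
By Propositions~\ref{prop:lie-extend} and~\ref{prop:nil-extend}, the half-space $\Db_C\CohGGm(\cN_C)_{\ge 0}$ is generated under extensions by the left-hand side with $n + k \le 0$, while $\Db_C\CohGGm(\fg_C)_{\ge 0}$ is generated under extensions by the right-hand side with the same constraint; the analogous statements hold for $C_{\le 0}$ with $n + k \ge 0$. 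Since $j_*$ is triangulated, it preserves extensions, and so both $C_{\ge 0}$ and $C_{\le 0}$ are preserved. Hence $j_*$ is co-$t$-exact.

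For the remaining assertions, I would invoke the general principle that adjoint functors between co-$t$-structures carry matched exactness properties: if $F \dashv G$ and $G$ is left (resp.\ right) co-$t$-exact, then $F$ is right (resp.\ left) co-$t$-exact. This is an immediate consequence of the orthogonality $\Hom(C_{\ge 1}, C_{\le 0}) = 0$ together with the characterization $C_{\le -1} = (C_{\ge 0})^\perp$ (and the dual statement), which holds in any bounded co-$t$-structure. Concretely, for right co-$t$-exactness of $j^*$, given $A \in \Db_C\CohGGm(\fg_C)_{\ge 0}$ and any $B \in \Db_C\CohGGm(\cN_C)_{\le -1}$, we have $j_*B \in \Db_C\CohGGm(\fg_C)_{\le -1}$ by the first step, so $\Hom(j^*A, B) \cong \Hom(A, j_*B) = 0$, and hence $j^*A \in \Db_C\CohGGm(\cN_C)_{\ge 0}$. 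Left co-$t$-exactness of $j^!$ follows by the symmetric calculation using $j_* \dashv j^!$.

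There is essentially no obstacle here: the corollary is formal once Proposition~\ref{prop:nil-extend} is in place and one uses the identity $j_{C*} = j_* i_{C*}$. The only mildly nontrivial input is the orthogonal characterization of the halves of a bounded co-$t$-structure, which is standard and applies in our setting since both co-$t$-structures constructed in Sections~\ref{sec:lie-extend} and~\ref{sec:nil-extend} are bounded.
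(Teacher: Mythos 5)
Your proposal is correct and follows essentially the same route as the paper: the paper also gets co-$t$-exactness of $j_*$ from the identity $j_*i_{C*} \cong j_{C*}$ together with the descriptions of the generating objects $i_{C*}\cT_\omega[n]\la k\ra$ and $j_{C*}\cT_\omega[n]\la k\ra$ of the two co-$t$-structures, and then deduces the statements for $j^*$ and $j^!$ ``by adjunction,'' which is precisely the orthogonality/adjunction argument you spell out (the characterization of the half-categories as perpendicular subcategories being valid since the halves of a co-$t$-structure are closed under direct summands).
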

\begin{proof}
The claim about $j_*$ is obvious from the description of silting objects in the two categories.  The claims for $j^*$ and $j^!$ follow by adjunction.
\end{proof}

\section{The nilpotent cone}
\label{sec:nilcone}

This section contains the main geometric result of the paper: the construction of a co-$t$-structure on $\Db\CohGGm(\cN)$.  We will build this co-$t$-structure using the co-$t$-structures on infinitesimal neighborhoods of nilpotent orbits from Proposition~\ref{prop:nil-extend}.  More generally, we obtain a co-$t$-structure on $\Db\CohGGm(U)$ for any $G$-stable open subset $U \subset \cN$.

As a technical tool, we will use the following full subcategories of $\Db_{\cN_C}(\fg_C)$:
\begin{align*}
^\perp\Db_C\CohGGm(\fg_C)_{\le n} &= \left\{ \cF \in \Db_{\cN_C}\CohGGm(\fg_C) \,\Big|\, 
\begin{array}{@{}c@{}}
\text{for $\cG \in \Db_C\CohGGm(\fg_C)_{\le n}$,} \\
\text{we have $\Hom(\cF,\cG) = 0$}
\end{array} \right\}, \\
\Db_C\CohGGm(\fg_C)_{\ge n}^\perp &= \left\{ \cF \in \Db_{\cN_C}\CohGGm(\fg_C) \,\Big|\, 
\begin{array}{@{}c@{}}
\text{for $\cG \in \Db_C\CohGGm(\fg_C)_{\ge n}$,} \\
\text{we have $\Hom(\cG,\cF) = 0$}
\end{array} \right\}.
\end{align*}
Note that objects of $^\perp\Db_C\CohGGm(\fg_C)_{\le n}$ or $\Db_C\CohGGm(\fg_C)_{\ge n}^\perp$ are \emph{not} required to have set-theoretic support on $C$: they can be supported on all of $\cN_C$.

Similarly, for any $G$-stable open subset $U \subset \cN$ that contains $C$ as a closed subset, we define
\begin{align*}
^\perp\Db_C\CohGGm(U)_{\le n} &= \left\{ \cF \in \Db\CohGGm(U) \,\Big|\, 
\begin{array}{@{}c@{}}
\text{for $\cG \in \Db_C\CohGGm(U)_{\le n}$,} \\
\text{we have $\Hom(\cF,\cG) = 0$}
\end{array} \right\}, \\
\Db_C\CohGGm(U)_{\ge n}^\perp &= \left\{ \cF \in \Db\CohGGm(U) \,\Big|\, 
\begin{array}{@{}c@{}}
\text{for $\cG \in \Db_C\CohGGm(U)_{\ge n}$,} \\
\text{we have $\Hom(\cG,\cF) = 0$}
\end{array} \right\}.
\end{align*}
An important special case of these categories is that in which $U =\cN_C$.

\begin{lem}\label{lem:perp-crit}
Let $\cF \in \Db_C\CohGGm(\cN_C)$, and let $U \subset \cN$ be a $G$-stable open subset containing $C$ as a closed subset.  Then $\cF$ lies in $^\perp\Db_C\CohGGm(\cN_C)_{\le n}$, resp.~$\Db_C\CohGGm(\cN_C)_{\ge n}^\perp$, if and only if the object $\cF|_U$ lies in  $^\perp\Db_C\CohGGm(U)_{\le n}$, resp.~$\Db_C\CohGGm(U)_{\ge n}^\perp$.
\end{lem}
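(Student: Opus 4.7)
The key preliminary observation is that the hypothesis $C$ closed in $U$ forces $U \cap \partial C = \emptyset$, so that $U \subset \cN_C$. Writing $\iota: U \hookrightarrow \cN_C$ for the resulting open immersion, we have $\cF|_U = \iota^*\cF$, and the plan is to reduce both perp conditions to statements purely about $\Db_C$-categories via a compatibility of $\iota_*$ and $\iota^*$ with the co-$t$-structures of Proposition~\ref{prop:nil-extend}.

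The main step is to observe that $\iota_*$ and $\iota^*$ restrict to mutually inverse equivalences
\[
\iota_*: \Db_C\CohGGm(U) \simto \Db_C\CohGGm(\cN_C), \qquad \iota^*: \Db_C\CohGGm(\cN_C) \simto \Db_C\CohGGm(U).
\]
This is standard: the closed complement $\cN_C \smallsetminus U$ is disjoint from $C$, so any object set-theoretically supported on $C$ restricts to zero there, and the adjunction unit/counit maps become isomorphisms. The indecomposable silting generators of both sides, as described in Proposition~\ref{prop:nil-extend}, are of the form $i_{C*}\cT_\omega[n]\la -n\ra$; since $i_{C*}^{\cN_C} = \iota_* \circ i_{C*}^U$ and $\iota^* \iota_* \cong \id$ on $\Db_C$, these correspond under the equivalence. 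Hence the equivalence identifies the full subcategories $\Db_C\CohGGm(\cN_C)_{\le n}$ and $\Db_C\CohGGm(U)_{\le n}$, and likewise for $(\cdot)_{\ge n}$.

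With this in hand, the lemma follows from adjunction. For $\cF \in \Db_C\CohGGm(\cN_C)$, we have $\cF \in {}^\perp\Db_C\CohGGm(\cN_C)_{\le n}$ if and only if $\Hom(\cF,\iota_*\cG') = 0$ for all $\cG' \in \Db_C\CohGGm(U)_{\le n}$ (using the equivalence to rewrite every test object as some $\iota_*\cG'$). By adjunction,
\[
\Hom_{\cN_C}(\cF, \iota_*\cG') \cong \Hom_U(\iota^*\cF, \cG'),
\]
so the condition is equivalent to $\iota^*\cF \in {}^\perp\Db_C\CohGGm(U)_{\le n}$. The argument for $\Db_C\CohGGm(\cdot)_{\ge n}^\perp$ is entirely symmetric, using the adjunction $(\iota_!,\iota^*) = (\iota_*,\iota^*)$ on $\Db_C$ (or, equivalently, again the equivalence $\iota_*$).

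There is no real obstacle here beyond unpacking definitions; the only thing that needs to be said carefully is that the equivalence on $\Db_C$-categories respects the co-$t$-structures, which is immediate from the construction of the silting objects by pushforward from $C$.
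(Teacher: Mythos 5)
Your proposal is correct and takes essentially the same route as the paper, whose entire proof is the one-line observation that since every test object $\cG$ is supported on $C \subset U$, restriction induces isomorphisms $\Hom(\cF,\cG) \cong \Hom(\cF|_U,\cG|_U)$ and $\Hom(\cG,\cF) \cong \Hom(\cG|_U,\cF|_U)$ --- exactly what your equivalence-plus-adjunction packaging delivers. Your explicit step matching the subcategories $\Db_C\CohGGm(\cN_C)_{\le n}$ and $\Db_C\CohGGm(U)_{\le n}$ via the common generators $i_{C*}\cT_\omega[n]\la k\ra$ is left implicit in the paper but is the same content, so no further comment is needed.
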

\begin{proof}
If $\cG \in \Db_C\CohGGm(\cN_C)$, then the support of $\cG$ is contained in $C \subset U$, so $\Hom(\cF,\cG) \cong \Hom(\cF|_U, \cG|_U)$ and $\Hom(\cG,\cF) \cong \Hom(\cG|_U, \cF|_U)$.
\end{proof}

For the next lemma, assume that $U = \cN_C$.  The proof is identical to that of Lemma~\ref{lem:silt-full} and will be omitted.

\begin{lem}\label{lem:silt-full-2}
If $\cF \in {}^\perp\Db_C\CohGGm(\cN_C)_{\le -1}$ and $\cG \in \Db_C\CohGGm(\cN_C)_{\le 0}$, then the natural map $\Hom(\cF,\cG) \to \Hom(j_*\cF, j_*\cG)$ is surjective.  
\end{lem}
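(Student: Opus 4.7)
The plan is to mimic the proof of Lemma~\ref{lem:silt-full} essentially verbatim, with one small substitution in the final vanishing step. The key observation is that the proof of Lemma~\ref{lem:silt-full} used the hypothesis $\cF \in \Db_C\CohGGm(\cN_C)_{\ge 0}$ \emph{only} to conclude that $\Hom(\cF, \cK) = 0$ for $\cK \in \Db_C\CohGGm(\cN_C)_{\le -1}$. But by the very definition of the left-orthogonal category $^\perp\Db_C\CohGGm(\cN_C)_{\le -1}$, this vanishing holds equally well for any $\cF$ in that left-orthogonal.

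Concretely, I would proceed as follows. First, since $j: \cN_C \hookrightarrow \fg_C$ is a closed immersion, $j_* \dashv j^!$, giving a natural isomorphism
\[
\Hom(j_*\cF, j_*\cG) \cong \Hom(\cF, j^!j_*\cG).
\]
Form the adjunction triangle
\[
\cG \to j^!j_*\cG \to \cK \to
\]
and apply $\Hom(\cF, -)$ to get the long exact sequence
\[
\cdots \to \Hom(\cF,\cG) \to \Hom(\cF, j^!j_*\cG) \to \Hom(\cF,\cK) \to \cdots.
\]
By Corollary~\ref{cor:j-adj-strict}, since $\cG \in \Db_C\CohGGm(\cN_C)_{\le 0}$, the cone $\cK$ lies in $\Db_C\CohGGm(\cN_C)_{\le -1}$. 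Combined with the assumption $\cF \in {}^\perp\Db_C\CohGGm(\cN_C)_{\le -1}$, this forces $\Hom(\cF,\cK) = 0$, so the map $\Hom(\cF,\cG) \to \Hom(\cF,j^!j_*\cG) \cong \Hom(j_*\cF, j_*\cG)$ is surjective.

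There is no real obstacle here; the only thing to double-check is that the definition of $^\perp\Db_C\CohGGm(\cN_C)_{\le -1}$ genuinely supplies the vanishing against $\cK$. This is immediate because $\cK$ is constructed from $\cG$ and $j^!j_*\cG$ both supported on $C$ (note $j^!$ preserves support on $C$ since $j$ is a closed immersion and $\cG$ is already supported on $C$), hence $\cK \in \Db_C\CohGGm(\cN_C)$; together with $\cK \in \Db\CohGGm(\cN_C)_{\le -1}$ (by Corollary~\ref{cor:j-adj-strict}) this gives $\cK \in \Db_C\CohGGm(\cN_C)_{\le -1}$, which is exactly what the definition of the left-orthogonal requires.
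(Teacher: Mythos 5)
Your proof is correct and is essentially the paper's own argument: the paper simply states that the proof of Lemma~\ref{lem:silt-full-2} is identical to that of Lemma~\ref{lem:silt-full}, namely apply $\Hom(\cF,-)$ to the adjunction triangle $\cG \to j^!j_*\cG \to \cK \to{}$, invoke Corollary~\ref{cor:j-adj-strict} to place $\cK$ in $\Db_C\CohGGm(\cN_C)_{\le -1}$, and get $\Hom(\cF,\cK)=0$, now directly from the definition of ${}^\perp\Db_C\CohGGm(\cN_C)_{\le -1}$ rather than from co-$t$-structure orthogonality. Your extra support check on $\cK$ is harmless but already built into the statement of Corollary~\ref{cor:j-adj-strict}.
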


\begin{lem}
\phantomsection\label{lem:pregluing}
\begin{enumerate}
\item For all $\cF \in \Db_{\cN_C}\CohGGm(\fg_C)$, there exist integers $a < b$ such that\label{it:gc-bdd}
\[
\cF \in {}^\perp\Db_C\CohGGm(\fg_C)_{\le a} \cap \Db_C\CohGGm(\fg_C)_{\ge b}^\perp.
\]
\item For $\cF \in \Db\CohGGm(\cN_C)$, we have
$\cF \in \Db_C\CohGGm(\cN_C)_{\ge b}^\perp$
if and only if
$j_*\cF \in \Db_C\CohGGm(\fg_C)_{\ge b}^\perp$.\label{it:gc-geqp}
\item For $\cF \in \Db\CohGGm(\cN_C)$, we have
$\cF \in {}^\perp\Db_C\CohGGm(\cN_C)_{\le a}$
if and only if
$j_*\cF \in {}^\perp\Db_C\CohGGm(\fg_C)_{\le b}$.\label{it:gc-leqp}
\end{enumerate}
\end{lem}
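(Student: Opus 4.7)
The plan is to handle part~(1) separately from parts~(2) and~(3), with the latter two being closely related and largely formal.

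For part~(1), the crucial point is that $j_C \colon C \hookrightarrow \fg_C$ is a regular embedding of smooth varieties, so both $j_C^*$ and $j_C^!$ preserve bounded derived categories of coherent sheaves. Hence for any $\cF \in \Db_{\cN_C}\CohGGm(\fg_C)$, both $j_C^*\cF$ and $j_C^!\cF$ lie in $\Db\CohGGm(C)$. The co-$t$-structure on the latter from Section~\ref{ss:cot-nilp} is bounded (its coheart $\Silt(C)$ generates the triangulated category under extensions and shifts), so we may choose integers $b' \ll 0$ with $j_C^*\cF \in \Db\CohGGm(C)_{\ge b'}$ and $a' \gg 0$ with $j_C^!\cF \in \Db\CohGGm(C)_{\le a'}$. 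The adjunction $j_C^* \dashv j_{C*}$ gives
\[
\Hom(\cF, j_{C*}\cT_\omega[m]\la k\ra) \cong \Hom(j_C^*\cF, \cT_\omega[m]\la k\ra),
\]
which vanishes whenever $\cT_\omega[m]\la k\ra \in \Db\CohGGm(C)_{\le b'-1}$, i.e., when $m+k \ge 1-b'$; this exhibits $\cF \in {}^\perp\Db_C\CohGGm(\fg_C)_{\le a}$ for $a := b' - 1$. The adjunction $j_{C*} \dashv j_C^!$ symmetrically yields $\cF \in \Db_C\CohGGm(\fg_C)_{\ge b}^\perp$ for $b := a' + 1$. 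Since $b'$ can be taken arbitrarily negative and $a'$ arbitrarily positive, we can arrange $a < b$.

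Parts~(2) and~(3) reduce to formal manipulations with adjunctions, using Corollary~\ref{cor:jc-co-t-exact} together with the full faithfulness of $j_*$ (which holds since $j$ is a closed immersion). For part~(2), the direction ``$\Rightarrow$'' uses $j^* \dashv j_*$: for $\cG \in \Db_C\CohGGm(\fg_C)_{\ge b}$ we have $\Hom(\cG, j_*\cF) \cong \Hom(j^*\cG, \cF)$, with $j^*\cG \in \Db_C\CohGGm(\cN_C)_{\ge b}$ by right co-$t$-exactness of $j^*$, so the hypothesis on $\cF$ finishes. The direction ``$\Leftarrow$'' uses full faithfulness: for $\cG \in \Db_C\CohGGm(\cN_C)_{\ge b}$ we have $\Hom(\cG, \cF) \cong \Hom(j_*\cG, j_*\cF)$, with $j_*\cG \in \Db_C\CohGGm(\fg_C)_{\ge b}$ by co-$t$-exactness of $j_*$. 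Part~(3) (where the trailing ``$b$'' is presumably a typo for ``$a$'') runs along the same pattern with $j^* \dashv j_*$ replaced by $j_* \dashv j^!$ and right co-$t$-exactness of $j^*$ replaced by left co-$t$-exactness of $j^!$.

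The main obstacle is really just part~(1), specifically the boundedness of $j_C^*\cF$ and $j_C^!\cF$. This is the one place where the smoothness of both $C$ and $\fg_C$ is genuinely used, via the regularity of the closed embedding $j_C$; once this is in hand, the rest of part~(1) is a direct appeal to the co-$t$-structure axiom on $\Db\CohGGm(C)$, and parts~(2) and~(3) are pure category-theoretic bookkeeping.
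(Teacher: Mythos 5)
Your part~(1) is correct and is essentially the paper's own argument: factor maps to/from $j_{C*}\cT_\omega[n]\la k\ra$ through the (bounded, coherent) objects $j_C^*\cF$ and $j_C^!\cF$ and use boundedness of the co-$t$-structure on $\Db\CohGGm(C)$. The ``$\Rightarrow$'' halves of parts~(2) and~(3) are also fine (adjunction plus the co-$t$-exactness statements of Corollary~\ref{cor:jc-co-t-exact}), and you are right that the trailing $b$ in the statement of~(3) should be $a$.

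However, the ``$\Leftarrow$'' halves of (2) and (3) have a genuine gap: you appeal to ``full faithfulness of $j_*$, which holds since $j$ is a closed immersion,'' and this is false at the derived level. For the closed immersion $j\colon \cN_C \hookrightarrow \fg_C$ the derived counit $j^*j_*\cG \to \cG$ is far from an isomorphism: by Lemma~\ref{lem:complete-int} one has $\cH^{-k}(j^*j_*\cG) \cong \bigoplus \cG\la -2(n_{i_1}+\cdots+n_{i_k})\ra$ with $r = \codim_{\fg}\cN$ Koszul terms, so $\Hom(j_*\cG, j_*\cF) \cong \Hom(j^*j_*\cG,\cF)$ is in general strictly larger than $\Hom(\cG,\cF)$ (already $\Ext^1(j_*\cO_{\cN_C}, j_*\cO_{\cN_C}\la -2n_i\ra) \ne 0$ while the corresponding group on $\cN_C$ vanishes); the map $\Hom(\cG,\cF)\to\Hom(j_*\cG,j_*\cF)$ is not injective in general, so vanishing upstairs does not formally force vanishing downstairs. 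Closing this gap is precisely the content of the lemma: the hypothesis $j_*\cF \in \Db_C\CohGGm(\fg_C)^\perp_{\ge b}$ only gives vanishing against $j^*j_*i_{C*}\cT_\omega$, and one must then exploit the distinguished triangle $\cK \to j^*j_*i_{C*}\cT_\omega \to i_{C*}\cT_\omega \to$ together with the strict positivity of the internal twists $n_{ij} \ge 2i \ge i+1$ occurring in $\cH^{-i}(\cK)$, arguing by contradiction with a morphism $i_{C*}\cT_\omega \to \cF[n]\la k\ra$, $n+k>0$, chosen with $n$ minimal --- exactly the mechanism in the proof of Proposition~\ref{prop:nil-extend}, which the paper re-runs here. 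So parts~(2) and~(3) are not ``pure category-theoretic bookkeeping''; without an argument of this kind only the ``$\Rightarrow$'' directions follow formally.
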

\begin{proof}
\eqref{it:gc-bdd}~Given $\cF \in \Db_{\cN_C}\CohGGm(\fg_C)$, any map $\cF \to j_{C*}\cT_\omega[n]\la k\ra$ factors through $\cF \to j_{C*}j_C^*\cF$.  The object $j_{C*}j_C^*\cF$ lives in $\Db_C\CohGGm(\fg_C)$.  With respect to the bounded co-$t$-structure on that category, we have
\[
j_{C*}j_C^*\cF \in \Db_C\CohGGm(\fg_C)_{\ge a+1}
\]
for some integer $a$.  It follows that $\Hom(\cF,j_{C*}\cT_\omega[n]\la k\ra) = 0$ if $n+k \ge -a$, and hence that $\cF \in {}^\perp\Db_C\CohGGm(\fg_C)_{\le a}$.  The proof of the existence of an integer $b$ such that $\cF \in \Db_C\CohGGm(\fg_C)_{\ge b}^\perp$ is similar, using $j_{C*}j_C^!\cF \to \cF$.

\eqref{it:gc-geqp}~If $\cF \in \Db_C\CohGGm(\cN_C)_{\ge b}^\perp$, then  $j_*\cF \in \Db_C\CohGGm(\fg_C)_{\ge b}^\perp$ by adjunction and the right co-$t$-exactness of $j^*: \Db_C\CohGGm(\fg_C) \to \Db_C\CohGGm(\cN_C)$ (see Corollary~\ref{cor:jc-co-t-exact}).  For the opposite implication, suppose $j_*\cF \in \Db_C\CohGGm(\fg_C)_{\ge b}^\perp$ but $\cF \notin \Db_C\CohGGm(\cN_C)_{\ge b}^\perp$.  For simplicity, let us assume without loss of generality that $b = 0$.  Then there is some nonzero morphism $i_{C*}\cT_\omega \to \cF[n]\la k\ra$ with $n+k > 0$.  Choose such a morphism with $n$ minimal.  We will now follow the pattern of the proof of Proposition~\ref{prop:nil-extend}.  We consider the distinguished triangle~\eqref{eqn:trunc-dt}, which gives rise to a long exact sequence
\begin{multline*}
\cdots \to \Hom(\cK[1], \cF[n]\la k\ra) \to
\Hom(i_{C*}\cT_\omega, \cF[n]\la k\ra) \\
\to \Hom(j^*j_{C*}\cT_\omega, \cF[n]\la k\ra) \to \cdots.
\end{multline*}
The last term vanishes by adjunction and the fact that $j_*\cF \in \Db_C\CohGGm(\fg_C)_{\ge 0}^\perp$, so the first term must be nonzero.  By the same reasoning as in Proposition~\ref{prop:nil-extend}, this implies that
\[
\Hom(i_{C*}\cT_\omega, \cF\la k+n_{ij}\ra[n-1-i]) \ne 0
\]
for some integers $i \ge 1$ and $n_{ij} \ge i+1$, but this contradicts the minimality of $n$.  

\eqref{it:gc-leqp}~The proof of this statement is very similar to that of part~\eqref{it:gc-geqp}.  We omit the details.
\end{proof}

By adjunction, we have
\begin{equation}\label{eqn:pregluing-crit}
\begin{aligned}
\cF \in {}^\perp\Db_C\CohGGm(\fg_C)_{\le a} \quad&\text{if and only if} \quad j_C^*\cF \in \Db\CohGGm(C)_{\ge a+1}, \\
\cF \in \Db_C\CohGGm(\fg_C)_{\ge b}^\perp \quad&\text{if and only if} \quad j_C^!\cF \in \Db\CohGGm(C)_{\le b-1}.
\end{aligned}
\end{equation}

\begin{lem}\label{lem:pregluing-bdd}
For all $\cF \in \Db\CohGGm(U)$, there exist integers $a < b$ such that
\[
\cF \in {}^\perp\Db_C\CohGGm(U)_{\le a} \cap \Db_C\CohGGm(U)_{\ge b}^\perp.
\]
\end{lem}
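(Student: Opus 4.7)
The plan is to reduce the statement, by extension of $\cF$, to the special case $U = \cN_C$, where Lemma~\ref{lem:pregluing} applies directly. Since $C$ is closed in $U$ and $\partial C \cap U = \emptyset$, we have $U \subset \cN_C$ as an open subset; let $\ell: U \hookrightarrow \cN_C$ denote the corresponding open immersion.

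First I would dispose of the case $U = \cN_C$. Here $j: \cN_C \hookrightarrow \fg_C$ is a closed immersion, so $j_*\cF$ lies in $\Db_{\cN_C}\CohGGm(\fg_C)$, and Lemma~\ref{lem:pregluing}(1) yields integers $a < b$ with
\[
j_*\cF \in {}^\perp\Db_C\CohGGm(\fg_C)_{\le a} \cap \Db_C\CohGGm(\fg_C)_{\ge b}^\perp.
\]
Parts (2) and (3) of Lemma~\ref{lem:pregluing} then translate this directly into the required statement for $\cF$ on $\cN_C$.

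For general $U$, I would choose a bounded coherent extension $\tilde\cF \in \Db\CohGGm(\cN_C)$ with $\ell^*\tilde\cF \cong \cF$ (available by standard Noetherian extension of coherent sheaves across an open immersion) and apply the previous step to $\tilde\cF$. The same integers $a < b$ should work for $\cF$, once two bookkeeping statements are verified. First, for any $\cG \in \Db_C\CohGGm(U)$, the set-theoretic support of $\cG$ is contained in $C$, which is closed in $\cN_C$; since $\cG$ is annihilated by a power of the ideal of $C$, one checks that $R\ell_*\cG = \ell_*\cG \in \Db_C\CohGGm(\cN_C)$ is the naive extension by zero, and that $\ell_*$ carries the indecomposable silting generators $i_{C*}\cT_\omega[n]\la k\ra$ of $\Db_C\CohGGm(U)$ bijectively to those of $\Db_C\CohGGm(\cN_C)$, hence preserves the co-$t$-structure degrees $\le n$ and $\ge n$. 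Second, one needs the two Hom identifications
\[
\Hom_U(\cF,\cG) \cong \Hom_{\cN_C}(\tilde\cF, \ell_*\cG), \qquad \Hom_U(\cG,\cF) \cong \Hom_{\cN_C}(\ell_*\cG, \tilde\cF).
\]
The first is immediate from the adjunction $\ell^* \dashv R\ell_*$ combined with $\ell_*\cG \cong R\ell_*\cG$. For the second, the canonical map $\tilde\cF \to R\ell_*\cF$ has cone supported on the closed subset $\cN_C \setminus U$, which does not meet $C \supset \supp(\ell_*\cG)$; consequently maps from $\ell_*\cG$ into that cone vanish, reducing the computation via $\ell^* \dashv R\ell_*$ to $\Hom_U(\cG,\cF)$.

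The main obstacle is this second Hom identification: one has to reason about $R\ell_*\cF$, which may be unbounded, and argue that its difference from the chosen extension $\tilde\cF$ is invisible to every $C$-supported test object. Once that orthogonality is granted, the lemma follows from Lemma~\ref{lem:pregluing} with no new geometric input.
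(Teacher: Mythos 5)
Your argument is correct and follows essentially the same route as the paper: extend $\cF$ to an object of $\Db\CohGGm(\cN_C)$, apply Lemma~\ref{lem:pregluing} there, and transfer the perpendicularity conditions back to $U$. The only difference is that the transfer step is exactly the content of Lemma~\ref{lem:perp-crit}, which the paper cites and which you instead re-prove by hand via pushforward along the open immersion and the local cohomology triangle; your justification of the Hom identifications (disjointness of supports of $\ell_*\cG$ and the cone of $\tilde\cF \to R\ell_*\cF$) is sound.
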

\begin{proof}
Choose some $\cF' \in \Db\CohGGm(\cN_C)$ such that $\cF'|_U \cong \cF$.  By Lemma~\ref{lem:perp-crit}, it is enough to show that there exist integers $a < b$ such that
\[
\cF' \in {}^\perp\Db_C\CohGGm(\cN_C)_{\le a} \cap \Db_C\CohGGm(\cN_C)_{\ge b}^\perp.
\]
This claim is immediate from Lemma~\ref{lem:pregluing}.
\end{proof}

\begin{prop}\label{prop:preglue-extend}
Let $U \subset \cN$ be a $G$-stable open subset that contains $C$ as a closed subset.  Let $V = U \smallsetminus C$, and let $\cF \in \Db\CohGGm(V)$.  Then there exists an object $\tilde\cF \in \Db\CohGGm(U)$ such that
\[
\tilde\cF|_V \cong \cF
\qquad\text{and}\qquad \tilde\cF \in {}^\perp\Db_C\CohGGm(U)_{\le -1} \cap \Db_C\CohGGm(U)_{\ge 1}^\perp.
\]
Moreover, if $\cF$ is indecomposable, then $\tilde\cF$ can be chosen to be indecomposable as well.
\end{prop}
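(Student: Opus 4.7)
The plan is to build $\tilde\cF$ by first choosing an arbitrary extension of $\cF$ to $\Db\CohGGm(U)$ and then iteratively cleaning it up by gluing triangles whose third term is supported on $C$; since that third term has vanishing restriction to $V$, each modification preserves $\cF$ on $V$, and the task reduces to arranging the two orthogonality conditions. First I would choose any $\cF_0 \in \Db\CohGGm(U)$ with $\cF_0|_V \cong \cF$; this exists by standard extension of coherent sheaves along the open immersion $V \hookrightarrow U$ (Noetherian induction on the cohomology sheaves). By Lemma~\ref{lem:pregluing-bdd}, $\cF_0$ already satisfies the pair of orthogonality conditions for some $a_0 < b_0$, so the task becomes to push $a_0$ up to $-1$ and $b_0$ down to $1$ while keeping $\cF_0|_V$ fixed.

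The heart of the argument is an inductive improvement step. Suppose $\cF_0 \in \Db_C\CohGGm(U)_{\ge b}^\perp$ with $b \ge 2$. Let $\cH$ denote the shifted coheart $\Db_C\CohGGm(U)_{\ge b-1} \cap \Db_C\CohGGm(U)_{\le b-1}$, consisting of finite direct sums of shifted silting objects $i_{C*}\cT_\omega[n]\la k\ra$ with $n + k = -(b-1)$. Using the finite-dimensionality (and finite support in $\omega$ and $n$) of the relevant $\Hom$-groups, which follows from $\cF_0$ being a bounded complex of coherent sheaves on a Noetherian scheme together with the boundedness of the co-$t$-structure on $\Db_C\CohGGm(U)$ established in Proposition~\ref{prop:nil-extend}, I would produce an object $H \in \cH$ with a universal morphism $H \to \cF_0$ through which every morphism from $\cH$ to $\cF_0$ factors. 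The cone triangle $H \to \cF_0 \to \cF_1 \to$ then satisfies $\cF_1 \in \Db_C\CohGGm(U)_{\ge b-1}^\perp$: this is seen via a long exact sequence argument that combines the universal factorization with the co-$t$-structure vanishing $\Hom(\Db_C\CohGGm(U)_{\ge b-1}, \Db_C\CohGGm(U)_{\le b-2}) = 0$ applied to $H[1]$. Iterating drives $b$ to $1$, and a symmetric construction using cocones of left approximations drives $a_0$ up to $-1$. For indecomposability, once I have $\tilde\cF$ satisfying both orthogonality conditions I would apply Krull--Schmidt in $\Db\CohGGm(U)$; since $\cF$ is indecomposable, exactly one summand of $\tilde\cF$ has nontrivial restriction to $V$, the remaining summands are supported on $C$, and the chosen summand inherits the orthogonality conditions since these are stable under direct summands.

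The main obstacle I expect is verifying that each iterative improvement does not destroy the orthogonality condition on the other side. The cleanest case is when the gap $b_0 - a_0$ is large, so that the newly attached summands in $\cH$ lie sufficiently far from $\Db_C\CohGGm(U)_{\le a_0}$ for the co-$t$-structure Hom-vanishing to apply and leave the left orthogonality untouched. The tight-gap cases (when $b_0 - a_0$ is $1$ or $2$) require more care, and will likely be handled either by interleaving improvements on the two sides or by verifying directly that the small-gap configurations already imply the desired conclusion; pinning down this bookkeeping is the delicate technical part of the construction.
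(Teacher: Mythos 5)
Your overall architecture (extend arbitrarily, then improve the two perpendicularity bounds one step at a time by attaching cones of morphisms supported on $C$, which leaves the restriction to $V$ untouched) is the same as the paper's, and the cone/long-exact-sequence bookkeeping in your improvement step, as well as the Krull--Schmidt argument for indecomposability, are fine. Incidentally, the ``tight gap'' worry you defer at the end is not actually delicate: you only ever improve the $a$-side when $a < -1$ and the $b$-side when $b > 1$, and then the needed inequality $a+1 < b-1$ holds automatically, exactly as in the paper.

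The genuine gap is in how you produce the approximation. Your step requires a \emph{single} object $H$ in the shifted coheart of $\Db_C\CohGGm(U)$ together with a morphism $H \to \cF_0$ through which every morphism from a degree-$(b-1)$ silting object factors. For such an $H$ to exist as a finite direct sum you need a Hom-finiteness statement along the diagonal $n+k = -(b-1)$: only finitely many indecomposables $i_{C*}\cT_\omega[n]\la k\ra$ with $n+k$ fixed admit nonzero maps to $\cF_0$, and each such Hom-space is finite-dimensional. This does not follow from the reasons you cite. Noetherianity gives nothing of the sort, and the boundedness of the co-$t$-structure on $\Db_C\CohGGm(U)$ only controls Homs as the \emph{degree} $n+k$ varies, whereas all the objects on your diagonal have the same degree; moreover $\cF_0$ is not supported on $C$, so it is not even an object of that bounded co-$t$-structure, and replacing it by $R\Gamma_C(\cF_0)$ lands you outside $\Db\Coh$. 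Since $U$ is singular, $\Ext^m(i_{C*}\cT_\omega,\cF_0)$ is typically nonzero for infinitely many $m$, so the finiteness you need is really a statement that internal $\Gm$-weights grow faster than cohomological degree --- i.e.\ it is of the same nature as the estimates $n_{ij}\ge 2i$ and the weight bounds on $\cV_{x_C}$ that underlie Proposition~\ref{prop:nil-extend}, and it is nowhere established in the form you need. The paper's proof is engineered precisely to avoid this: it first reduces to $U=\cN_C$ (Lemma~\ref{lem:perp-crit}), then builds the approximating object not from Homs into $\cF'$ but from the \emph{bounded} object $j_C^*j_*\cF' \in \Db\CohGGm(C)$, truncating it with the co-$t$-structure on the smooth orbit $C$ to get $\cG_2$ in the shifted coheart, and finally descends the resulting morphism $j_*\cF' \to j_{C*}\cG_2$ from $\fg_C$ back to $\cN_C$ using the fullness statement of Lemma~\ref{lem:silt-full-2}. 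To salvage your route you would either have to prove the diagonal Hom-finiteness (a nontrivial addition), or replace your universal $H$ by an approximation constructed, as in the paper, from a co-$t$-truncation of $j_C^*j_*\cF_0$ on the orbit.
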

\begin{proof}
For the existence of $\tilde \cF$, Lemma~\ref{lem:perp-crit} implies that it is enough to work in the special case where $U = \cN_C$.  We will assume that $U = \cN_C$ until the last paragraph of the proof.  Under this assumption, Corollary~\ref{cor:jc-co-t-exact} and Lemma~\ref{lem:silt-full-2} are available.

Choose some object $\cF' \in \Db\CohGGm(\cN_C)$ such that $\cF'|_V \cong \cF$, and let $a \le b$ be integers
\begin{equation}\label{eqn:preglue-fund}
\cF' \in {}^\perp\Db_C\CohGGm(\cN_C)_{\le a} \cap \Db_C\CohGGm(\cN_C)_{\ge b}^\perp.
\end{equation}
Of course, $a$ may be replaced by any smaller integer, and $b$ by any larger integer.  We may therefore assume that $a \le -1$ and $b \ge 1$.

Suppose for now that $a < -1$. By~\eqref{eqn:pregluing-crit}, we have $j_C^*j_*\cF' \in \Db\CohGGm(C)_{\ge a+1}$, so by the axioms for a co-$t$-structure, there is a distinguished triangle
\[
\cG_1 \to j_C^*j_*\cF' \to \cG_2 \to
\]
where $\cG_1 \in \Db\CohGGm(C)_{\ge a+2}$, and
\[
\cG_2 \in \Db\CohGGm(C)_{\ge a+1} \cap \Db\CohGGm(C)_{\le a+1}.
\]
Now let $\phi:j_*\cF' \to j_*i_{C*}\cG_2$ be the composition
\[
j_*\cF' \to j_{C*}j_C^*j_*\cF' \to j_{C*}\cG_2 = j_*i_{C*}\cG_2,
\]
where the first map is an adjunction map.  We claim that if $n+k = -a-1$, then the map
\begin{equation}\label{eqn:fpp-surj}
\Hom(j_{C*}\cG_2, j_{C*}\cT_\omega[n]\la k\ra) \to 
\Hom(j_*\cF', j_{C*}\cT_\omega[n]\la k\ra)
\end{equation}
induced by $\phi$ is surjective.  Indeed, by adjunction, any map $j_*\cF' \to j_{C*}\cT_\omega[n]\la k\ra$ factors through $j_*\cF' \to j_{C*}j_C^*j_*\cF'$, and then the long exact sequence
\begin{multline*}
\cdots \to
\Hom(j_{C*}\cG_2, j_{C*}\cT_\omega[n]\la k\ra) \to
\Hom(j_{C*}j_C^*j_*\cF', j_{C*}\cT_\omega[n]\la k\ra) \\
\to \Hom(j_{C*}\cG_1, j_{C*}\cT_\omega[n]\la k\ra)
\to \cdots
\end{multline*}
proves the claim, because the last term vanishes.

By Lemma~\ref{lem:silt-full-2}, the map $\phi$ is obtained by applying $j_*$ to some (not necessarily unique) morphism $\tilde\phi: \cF' \to i_{C*}\cG_2$.  Complete this map to a distinguished triangle
\begin{equation}\label{eqn:preglue-dt}
\cF'' \to \cF' \xrightarrow{\tilde\phi} i_{C*}\cG_2 \to.
\end{equation}
Since $a+1 < b-1$ by assumption, we have
\[
i_{C*}\cG_2[-1] \in {}^\perp\Db_C\CohGGm(\cN_C)_{\le a} \cap \Db_C\CohGGm(\cN_C)_{\ge b}^\perp.  
\]
Thus,~\eqref{eqn:preglue-dt} shows that 
\[
\cF''|_V \cong \cF'|_V \cong \cF.
\qquad\text{and}\qquad
\cF'' \in {}^\perp\Db_C\CohGGm(\cN_C)_{\le a} \cap  \Db_C\CohGGm(\cN_C)_{\ge b}^\perp.
\]

We will now prove the stronger claim that 
\begin{equation}\label{eqn:preglue-induc}
\cF'' \in {}^\perp\Db_C\CohGGm(\cN_C)_{\le a+1} \cap  \Db_C\CohGGm(\cN_C)_{\ge b}^\perp.
\end{equation}
By Lemma~\ref{lem:pregluing}, it is enough to show that
\[
j_*\cF'' \in {}^\perp\Db\CohGGm_{\cN_C}(\fg_C)_{\le a+1}.
\]
Since we already know that $j_*\cF'' \in {}^\perp\Db\CohGGm_{\cN_C}(\fg_C)_{\le a}$, it is enough to show that
\begin{equation}\label{eqn:preglue-hom}
\Hom(j_*\cF'', j_{C*}\cT_\omega[n]\la k\ra)  = 0 \qquad\text{if $n+k = -a-1$.}
\end{equation}
The distinguished triangle $j_*\cF'' \to j_*\cF' \xrightarrow{\phi} j_{C*}\cG_2 \to$ gives rise to a long exact sequence
\begin{multline*}
\cdots \to
\Hom(j_{C*}\cG_2, j_{C*}\cT_\omega[n]\la k\ra) \to 
\Hom(j_*\cF', j_{C*}\cT_\omega[n]\la k\ra) \to \\
\Hom(j_*\cF'', j_{C*}\cT_\omega[n]\la k\ra) \to \Hom(j_{C*}\cG_2[-1], j_{C*}\cT_\omega[n]\la k\ra) \to \cdots
\end{multline*}
Here, the first map is surjective (see~\eqref{eqn:fpp-surj}), and the last term vanishes in view of the co-$t$-structure on $\Db_C\CohGGm(\fg_C)$.  We have now proved~\eqref{eqn:preglue-hom}, and hence~\eqref{eqn:preglue-induc}.

The construction carried out above shows how to modify the object $\cF'$ in such a way that the integer $a$ in~\eqref{eqn:preglue-fund} can be replaced by $a+1$.  The proof relies on the assumption that $a+1 < b-1$.  A similar (but ``dual'') construction lets us replace $b$ by $b-1$ (again assuming that $a+1 < b-1$).  Since we began with the assumption that $a \le -1$ and $b \ge 1$, these two constructions can be repeated until we arrive at an object $\tilde\cF$ as in the statement of the proposition.

It remains to prove the last assertion in the proposition.  For this, we return to allowing $U$ to be any $G$-stable open subset. Suppose $\cF$ is indecomposable.  The object $\tilde\cF$ obtained by the construction above is not necessarily indecomposable, but any direct summand of it still lies in ${}^\perp\Db_C\CohGGm(U)_{\le -1} \cap \Db_C\CohGGm(U)_{\ge 1}^\perp$, and it must have some indecomposable summand whose restriction to $U$ is isomorphic to $\cF$.
\end{proof}

\begin{lem}\label{lem:restrict-surj}
Let $U \subset \cN$ be a $G$-stable open subset that contains $C$ as a closed subset, and let $V = U \smallsetminus C$. If $\cF \in {}^\perp\Db_C\CohGGm(U)_{\le -1}$ and $\cG \in \Db_C\CohGGm(U)_{\ge 1}^\perp$, then the map
\[
\Hom(\cF,\cG[n]) \to \Hom(\cF|_V, \cG|_V[n])
\]
is surjective for $n = 0$, and an isomorphism for $n > 0$.
\end{lem}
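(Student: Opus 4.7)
The plan is to use the local cohomology distinguished triangle associated to the open inclusion $k : V \hookrightarrow U$, namely
\[
i_{C*}Ri_C^{!}\cG \;\to\; \cG \;\to\; Rk_{*}k^{*}\cG \;\to,
\]
viewed in $D^{+}\QCoh^{G \times \Gm}(U)$. Applying $\Hom(\cF,-[n])$ and reading off the long exact sequence, the surjectivity at $n=0$ and the bijectivity for $n \ge 1$ both reduce to the single vanishing
\[
\Hom\bigl(\cF,\, i_{C*}Ri_C^{!}\cG[m]\bigr) \;=\; 0 \qquad\text{for every } m \ge 1.
\]

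To attack this vanishing, I would write the local cohomology as a homotopy colimit
\[
i_{C*}Ri_C^{!}\cG \;\cong\; \operatorname{hocolim}_{n} R\cHom_{\cO_U}(\cO_U/\cI^{n},\, \cG),
\]
where $\cI$ is the ideal sheaf of $C$ in $U$. Since $\cF$ is a bounded complex of coherent sheaves on the noetherian scheme $U$, $\Hom(\cF,-)$ commutes with such filtered homotopy colimits, and by the $\lotimes$-$\cHom$ adjunction the problem becomes
\[
\Hom\bigl(\cF \lotimes_{\cO_U} \cO_U/\cI^{n},\ \cG[m]\bigr) \;=\; 0 \qquad\text{for all } n \ge 1,\ m \ge 1.
\]
Induct on $n$. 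Using the short exact sequence $0 \to i_{C*}(\cI^{n-1}/\cI^{n}) \to \cO_U/\cI^{n} \to \cO_U/\cI^{n-1} \to 0$, the inductive step reduces via the projection formula to controlling $i_{C*}\bigl(Li_C^{*}\cF \lotimes_{\cO_C} (\cI^{n-1}/\cI^{n})|_{C}\bigr)$. The conormal sheaf $(\cI/\cI^{2})|_{C}$ has strictly positive $\Gm$-weights (dual to Lemma~\ref{lem:normal}\eqref{it:normal-wts}), so a weight-by-weight analysis along the lines of Lemma~\ref{lem:gfd-co-t}, combined with the adjunction translating the hypothesis $\cF \in {}^{\perp}\Db_C\CohGGm(U)_{\le -1}$ into a positivity property of $Li_C^{*}\cF$, places each such subquotient (after shift by $[-m]$) inside $\Db_C\CohGGm(U)_{\ge 1}$. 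The hypothesis $\cG \in \Db_C\CohGGm(U)_{\ge 1}^{\perp}$ then delivers the required vanishing.

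The main obstacle I anticipate is the unboundedness: because $U \subset \cN$ is typically singular, both $\cF \lotimes \cO_U/\cI^{n}$ and $Ri_C^{!}\cG$ generally live only in $D^{\pm}\CohGGm$, so the hocolim/compactness step has to be justified outside the bounded derived setting, and the $\Gm$-weight bookkeeping must be tracked through possibly infinitely many cohomological degrees. This bookkeeping is analogous in spirit to the analysis in Lemma~\ref{lem:normal}, but it must be carried through a more global derived tensor-product framework.
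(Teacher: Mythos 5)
Your opening reduction is exactly the paper's: applying $\Hom(\cF,-)$ to the triangle $R\Gamma_C(\cG) \to \cG \to h_*h^*\cG \to$ (with $h: V \hookrightarrow U$) reduces both assertions to the vanishing $\Hom(\cF, R\Gamma_C(\cG)[m]) = 0$ for $m \ge 1$. After that the proposal has a genuine gap, and it is precisely the obstacle you flag at the end, left unresolved. First, pulling $\Hom(\cF,-)$ through the homotopy colimit is not automatic ($\cF$ is bounded coherent but not perfect on the singular $U$, hence not compact; this can be salvaged by a filtered-colimit argument for uniformly bounded-below targets, but it needs saying), and it replaces the vanishing of a colimit by the strictly stronger termwise vanishing $\Hom(\cF \lotimes \cO_U/\cI^n, \cG[m]) = 0$, which is not obviously true. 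More seriously, the proposed proof of that termwise statement cannot be run with the tools available: since $U$ is singular along $C$, the objects $\cF \lotimes \cO_U/\cI^n$, $Li_C^*\cF$, and $i_{C*}\bigl(Li_C^*\cF \lotimes (\cI^{n-1}/\cI^n)|_C\bigr)$ are unbounded below, so they do not lie in $\Db_C\CohGGm(U)$ at all, whereas the co-$t$-structure, Lemma~\ref{lem:gfd-co-t}, and the very definition of $\Db_C\CohGGm(U)_{\ge 1}^{\perp}$ (orthogonality against \emph{bounded} objects) only apply to bounded complexes. Likewise the hypothesis $\cF \in {}^\perp\Db_C\CohGGm(U)_{\le -1}$ is an orthogonality condition, not membership in a coaisle; it translates into $\Hom(Li_C^*\cF, \Db\CohGGm(C)_{\le -1}) = 0$, but for an object of $D^-$ there is no co-$t$-decomposition turning this into the positivity you invoke, and canonical truncations do not preserve it. Finally, good-filtration-dimension control in the style of Lemma~\ref{lem:normal} is unavailable for the cohomology of $Li_C^*\cF$, since $\cF$ is arbitrary subject only to an orthogonality condition. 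So the crux of your argument is missing.

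The paper avoids all unbounded bookkeeping with a factoring trick: replace $\cG$ by an injective resolution, so that $R\Gamma_C(\cG)[m]$ is a bounded-below complex of injective quasicoherent sheaves supported set-theoretically on $C$; any chain map from the bounded coherent complex $\cF$ then has image in a bounded coherent subcomplex $\cE \subset R\Gamma_C(\cG)[m]$ supported on $C$, hence $\cE \in \Db_C\CohGGm(U)$. Choosing a co-$t$-decomposition $\cK_1 \to \cE \to \cK_2 \to$ with $\cK_1 \in \Db_C\CohGGm(U)_{\ge 0}$ and $\cK_2 \in \Db_C\CohGGm(U)_{\le -1}$, the hypothesis on $\cF$ kills $\Hom(\cF,\cK_2)$, so the map factors through $\cK_1$; and $\Hom(\cK_1, R\Gamma_C(\cG)[m]) \cong \Hom(\cK_1[-m], \cG) = 0$ because $\cK_1[-m] \in \Db_C\CohGGm(U)_{\ge 1}$ and $\cG \in \Db_C\CohGGm(U)_{\ge 1}^{\perp}$. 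If you want to keep your completion-style approach, you would need some analogue of this device to reduce to bounded objects supported on $C$; as written, the proposal is incomplete at its key step.
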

\begin{proof}
Let $h: V \hookrightarrow U$ be the inclusion map.  In the derived category of quasicoherent sheaves $D^+\QCoh^{G \times \Gm}(U)$, we have a distinguished triangle
\[
R\Gamma_C(\cG) \to \cG \to h_*h^*\cG \to.
\]
From the long exact sequence obtained by applying $\Hom(\cF,{-})$ to this triangle, we see that it is enough to prove that
\[
\Hom(\cF, R\Gamma_C(\cG)[n]) = 0
\qquad\text{if $n > 0$.}
\]

Suppose we have a morphism $\phi: \cF \to R\Gamma_C(\cG)[n]$.  We will work with this map at the level of chain complexes, as follows: first replace $\cG$ by an injective resolution.  Then, since $\Gamma_C$ sends injective sheaves to injective sheaves, $R\Gamma_C(\cG)$ is a bounded-below complex of injective quasicoherent sheaves supported set-theoretically on $C$.  Since $\cF$ is a bounded complex of coherent sheaves, the image of the chain map $\phi: \cF \to R\Gamma_C(\cG)[n]$ is contained in some bounded subcomplex of coherent sheaves $\cE \subset R\Gamma_C(\cG)[n]$.  Of course, the terms of $\cE$ are also supported set-theoretically on $C$, so $\cE$ belongs to $\Db_C\CohGGm(U)$.  Using the co-$t$-structure on that category, we can find a distinguished triangle
\[
\cK_1 \to \cE \to \cK_2 \to
\]
with $\cK_1 \in \Db_C\CohGGm(U)_{\ge 0}$ and $\cK_2 \in \Db_C\CohGGm(U)_{\le -1}$.   Since $\cF$ belongs to ${}^\perp\Db_C\CohGGm(U)_{\le -1}$, we have $\Hom(\cF,\cK_2) = 0$, and any map $\cF \to  \cE$ factors through $\cK_1$.

To summarize, our map $\phi$ factors as a composition
\begin{equation}\label{eqn:restrict-surj-factor}
\cF \to \cK_1 \to \cE \to R\Gamma_C(\cG)[n].
\end{equation}
Since $\cK_1$ is set-theoretically supported on $C$, the map
\[
\Hom(\cK_1, R\Gamma_C(\cG)[n]) \simto \Hom(\cK_1[-n], \cG)
\]
is an isomorphism.  But since $\cG \in \Db_C\CohGGm(U)_{\ge 1}^\perp$, both of these $\Hom$-groups are $0$.  We conclude that the composition of maps in~\eqref{eqn:restrict-surj-factor} is zero, as desired.
\end{proof}

\begin{lem}\label{lem:silt-main}
Let $U \subset \cN$ be a $G$-stable open subset, and let $C \subset U$ be a nilpotent orbit.  For $\omega \in \Omega_C$, there is (up to isomorphism) a unique object $\cS_U(C,\cT_\omega) \in \Db\CohGGm(U)$ with the following properties:
\begin{enumerate}
\item $\cS_U(C,\cT_\omega)$ is indecomposable.\label{it:s-indecomp}
\item $\cS_U(C,\cT_\omega)$ is supported set-theoretically on $\overline{C} \cap U$.\label{it:s-support}
\item If $i_C: C \hookrightarrow \cN_C \cap U$ denotes the inclusion map, then\label{it:s-tilt}
\[
\textstyle
\cS_U(C,\cT_\omega)|_{\cN_C \cap U} \cong i_{C*}\cT_\omega[-\frac{1}{2}\codim C]\la \frac{1}{2}\codim C\ra.
\]
\item For each orbit $C' \subset \partial C \cap U$, we have\label{it:s-pullback}
\begin{align*}
j_{C'}^!j_*\cS_U(C,\cT_\omega) &\in \Db\CohGGm(C')_{\le 0}, \\
j_{C'}^*j_*\cS_U(C,\cT_\omega) &\in \Db\CohGGm(C')_{\ge 0}.
\end{align*}
\end{enumerate}
Moreover, this object has the following additional properties:
\begin{enumerate}
\setcounter{enumi}{4}
\item If $V \subset U$ is a smaller $G$-stable open subset that contains $C$, then\label{it:s-open}
\[
\cS_U(C,\cT_\omega)|_V \cong \cS_V(C,\cT_\omega).
\]
\item If $C_0$ is a nilpotent orbit that is closed in $U$, and if $C \ne C_0$, then\label{it:s-perp}
\[
\cS_U(C,\cT_\omega) \in {}^\perp\Db_{C_0}\CohGGm(U)_{\le -1} \cap \Db_{C_0}\CohGGm(U)_{\ge 1}^\perp.
\]
\end{enumerate}
\end{lem}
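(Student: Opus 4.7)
The plan is to construct $\cS_U(C,\cT_\omega)$ by induction on the number of $G$-orbits contained in $\overline{C} \cap U$, simultaneously establishing existence and uniqueness, and then deducing properties~(5) and~(6) from uniqueness. For the base case when $\overline{C} \cap U = C$, the orbit $C$ is closed in $U$ (so $U \subseteq \cN_C$), and we set
\[
\cS_U(C,\cT_\omega) := i_{C*}\cT_\omega[-\tfrac{1}{2}\codim C]\la\tfrac{1}{2}\codim C\ra;
\]
properties~(1)--(3) are immediate and (4)--(6) are vacuous.

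For the inductive step, choose an orbit $C_0 \subseteq \overline{C} \cap U$ with $C_0 \ne C$ that is closed in $U$, and set $V := U \setminus C_0$. By the inductive hypothesis, $\cS_V(C,\cT_\omega)$ exists and is indecomposable. Applying Proposition~\ref{prop:preglue-extend} at $C_0$ produces an indecomposable $\tilde\cF \in \Db\CohGGm(U)$ with $\tilde\cF|_V \cong \cS_V(C,\cT_\omega)$ and
\[
\tilde\cF \in {}^\perp\Db_{C_0}\CohGGm(U)_{\le -1} \cap \Db_{C_0}\CohGGm(U)_{\ge 1}^\perp;
\]
we define $\cS_U(C,\cT_\omega) := \tilde\cF$. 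Property~(2) follows from $\supp(\tilde\cF) \subseteq \supp(\cS_V(C,\cT_\omega)) \cup C_0 \subseteq \overline{C} \cap U$; property~(3) holds because $\cN_C \cap U \subseteq V$ (as $C_0 \subseteq \partial C$ is disjoint from $\cN_C$), reducing to the inductive hypothesis on $V$; and property~(4) at the orbit $C_0$ is an immediate translation of the above perpendicularity via the adjunction~\eqref{eqn:pregluing-crit}. For an arbitrary orbit $C' \subseteq \partial C \cap U$, restrict to $\cN_{C'} \cap U$ (where $C'$ is closed) and reduce to the preceding case using property~(5).

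The main obstacle is proving uniqueness of an object satisfying (1)--(4), from which properties~(5) and~(6) can then be extracted. Given two such objects $\cF_1, \cF_2$, we mirror the induction: inductive uniqueness provides $\cF_1|_V \cong \cF_2|_V$, and property~(4) combined with~\eqref{eqn:pregluing-crit} places both $\cF_1, \cF_2$ in ${}^\perp\Db_{C_0}\CohGGm(U)_{\le -1} \cap \Db_{C_0}\CohGGm(U)_{\ge 1}^\perp$, so Lemma~\ref{lem:restrict-surj} lifts this isomorphism to some $\phi : \cF_1 \to \cF_2$, and symmetrically its inverse to some $\psi: \cF_2 \to \cF_1$. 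To conclude that $\phi$ is an isomorphism, observe that $\psi\phi \in \End(\cF_1)$ restricts to $\id_{\cF_1|_V}$; since $\End(\cF_1)$ and $\End(\cF_1|_V) = \End(\cS_V(C,\cT_\omega))$ are both local rings (by indecomposability) and the surjective restriction map is automatically a local homomorphism, the element $\psi\phi$ is a unit in $\End(\cF_1)$, and similarly $\phi\psi$ is a unit in $\End(\cF_2)$, so $\phi$ is invertible. Property~(6) for another closed orbit $C_0' \ne C$ in $U$ then follows by re-running the inductive construction with $C_0'$ in place of $C_0$ and invoking uniqueness; property~(5) follows by a similar argument once one verifies that $\cS_U(C,\cT_\omega)|_{V'}$ remains indecomposable on $V'$ and thus satisfies the characterizing properties~(1)--(4) there.
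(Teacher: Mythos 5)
Your construction follows essentially the same route as the paper's: the same induction over orbits (the paper inducts on the number of orbits of $U$, you on the number of orbits of $\overline{C}\cap U$, which changes nothing essential and even makes property~(2) cleaner, since your $C_0$ lies in $\overline{C}$), the same base case, the same use of Proposition~\ref{prop:preglue-extend} to cross a closed orbit, and the same uniqueness mechanism via Lemma~\ref{lem:restrict-surj} and surjections of local endomorphism rings. The one organizational difference is that you obtain property~(6) by re-running the construction at each closed orbit and invoking uniqueness, whereas the paper deduces~(6) directly from~(4) via Lemma~\ref{lem:pregluing} and~\eqref{eqn:pregluing-crit}. Both work for closed orbits $C_0\subset\overline{C}$, but note that~(6) is asserted for \emph{every} closed orbit $C_0\neq C$ of $U$, including those not contained in $\overline{C}$; for such $C_0$ your re-run does not literally apply (the extension produced by Proposition~\ref{prop:preglue-extend} across an orbit outside $\overline{C}$ need not satisfy~(2), so uniqueness cannot be invoked for it), and you should instead add the one-line observation that property~(2) makes the supports disjoint, so all relevant $\Hom$-groups vanish.

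Two steps also need to be put in the correct logical order. First, in the uniqueness argument you invoke ``inductive uniqueness'' to get $\cF_1|_V\cong\cF_2|_V$ before establishing the perpendicularity; but to apply uniqueness on $V$ you must first know that $\cF_i|_V$ satisfies (1)--(4) on $V$, and indecomposability of the restriction is exactly the nonobvious point: it comes from the surjection $\End(\cF_i)\twoheadrightarrow\End(\cF_i|_V)$ of Lemma~\ref{lem:restrict-surj}, which in turn requires the perpendicularity deduced from~(4) (for this one needs Lemma~\ref{lem:pregluing} and Lemma~\ref{lem:perp-crit} in addition to~\eqref{eqn:pregluing-crit}). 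So the order must be: (4) at $C_0$ $\Rightarrow$ perpendicularity $\Rightarrow$ End-surjection $\Rightarrow$ $\cF_i|_V$ indecomposable $\Rightarrow$ $\cF_i|_V\cong\cS_V(C,\cT_\omega)$ $\Rightarrow$ lift and conclude with the local-ring argument you state correctly. Second, for property~(4) at an orbit $C'\neq C_0$ you appeal to property~(5), which is circular at that stage (property~(5) for $U$ is only obtained after uniqueness); what you should use is simply that $\tilde\cF|_V\cong\cS_V(C,\cT_\omega)$ by construction, together with the fact that the condition at $C'$ depends only on the restriction to a neighborhood of $C'$ contained in $V$ (beware that $\cN_{C'}\cap U$ itself need not be contained in $V$). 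All of these repairs use only tools already in your write-up, and with them your proof coincides with the paper's.
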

\begin{proof}
We proceed by induction on the number of orbits in $U$.  If $U$ is empty, there is nothing to prove.  Otherwise, assume that the lemma is already known to hold when $U$ is replaced by any smaller open subset.  

We will prove the existence of uniqueness of $\cS_U(C,\cT_\omega)$ satisfying~\eqref{it:s-indecomp}--\eqref{it:s-pullback}.  Before doing this, let us show if $\cS_U(C,\cT_\omega)$ exists, it automatically satisfies properties~\eqref{it:s-open} and~\eqref{it:s-perp} as well.  Indeed, property~\eqref{it:s-perp} follows from property~\eqref{it:s-pullback} by Lemma~\ref{lem:pregluing} and~\eqref{eqn:pregluing-crit}.  For property~\eqref{it:s-open}, by induction, it is enough prove it in the special case where $V$ is the complement of a single closed $G$-orbit in $U$.  Let $C_0$ be such a closed orbit, and let $V = U \smallsetminus C_0$.  Of course, property~\eqref{it:s-open} is vacuous if $C = C_0$.  If $C \ne C_0$, then by Lemma~\ref{lem:restrict-surj}, restriction to $V$ induces a surjective ring homomorphism
\[
\End(\cS_U(C,\cT_\omega)) \twoheadrightarrow \End(\cS_U(C,\cT_\omega)|_V).
\]
Since $\cS_U(C,\cT_\omega)$ is indecomposable, the left-hand side above is a local ring, and so the right-hand side is as well.  We have shown that $\cS_U(C,\cT_\omega)|_V$ is indecomposable.  It is easy to see that $\cS_U(C,\cT_\omega)|_V$ satisfies properties~\eqref{it:s-support}--\eqref{it:s-pullback} on $V$.  Property~\eqref{it:s-open} then follows by the uniqueness of $\cS_V(C,\cT_\omega)$ (which holds by induction).

Let us now prove the existence of $\cS_U(C,\cT_\omega)$ satisfying~\eqref{it:s-indecomp}--\eqref{it:s-pullback}.  Choose a closed $G$-orbit $C_0 \subset U$, and let $V = U \smallsetminus C_0$ as above.  If $C = C_0$, it is obvious that
\begin{equation}\label{eqn:silt-closed-defn}
\textstyle
\cS_U(C_0, \cT_\omega) = i_{C_0*}\cT_\omega[-\frac{1}{2}\codim C_0]\la \frac{1}{2}\codim C_0\ra
\end{equation}
has the required properties.  Otherwise, for $C \ne C_0$, we define
\[
\cS_U(C,\cT_\omega) =
\begin{array}{c}
\text{an indecomposable extension of $\cS_{V}(C,\cT_\omega)$ that} \\
\text{satisfies~\eqref{it:s-perp}, obtained from Proposition~\ref{prop:preglue-extend}.}
\end{array}
\]
This object obviously satisfies properties~\eqref{it:s-indecomp}--\eqref{it:s-tilt}, as well as property~\eqref{it:s-pullback} for $C' \ne C_0$.  Property~\eqref{it:s-pullback} for $C' = C_0$ follows from property~\eqref{it:s-perp} via Lemma~\ref{lem:pregluing} and~\eqref{eqn:pregluing-crit}.  This completes the proof of existence.

Finally, we must prove uniqueness. For $C = C_0$, this is obvious.  Suppose now that $C \ne C_0$.  If we had two objects $\cS_U^{(1)}(C,\cT_\omega)$ and $\cS_U^{(2)}(C,\cT_\omega)$ both satisfying~\eqref{it:s-indecomp}--\eqref{it:s-pullback}, then they would both satisfy properties~\eqref{it:s-open} and~\eqref{it:s-perp} as well.  By Lemma~\ref{lem:restrict-surj}, restriction to $V$ induces a surjective map
\[
\Hom(\cS_U^{(1)}(C,\cT_\omega), \cS_U^{(2)}(C,\cT_\omega)) \twoheadrightarrow \End(\cS_{V}(C,\cT_\omega)).
\]
In particular, there exists a map $\phi_1: \cS_U^{(1)}(C,\cT_\omega) \to \cS_U^{(2)}(C,\cT_\omega)$ such that $\phi_1|_{V} = \id_{\cS_{V}(C,\cT_\omega)}$.  Similarly, there exists a map $\phi_2: \cS_U^{(2)}(C,\cT_\omega) \to \cS_U^{(1)}(C,\cT_\omega)$ that also satisfies $\phi_2|_{V} = \id$.  The composition $\phi_2 \phi_1$ is an endomorphism of $\cS_U^{(1)}(C,\cT_\omega) $ whose image under the surjective ring homomorphism
\[
\End(\cS_U^{(1)}(C,\cT_\omega)) \twoheadrightarrow \End(\cS_{V}(C,\cT_\omega))
\]
is the identity map.  Because this is a homomorphism of local rings, we deduce that $\phi_2 \phi_1$ is invertible.  Likewise, $\phi_1 \phi_2$ is invertible.  It follows that $\phi_1$ and $\phi_2$ are themselves isomorphisms.  This completes the proof of uniqueness.
\end{proof}

\begin{thm}\label{thm:orbitwise}
Let $U \subset \cN$ be a $G$-stable open subset.  The full additive subcategory of $\Db\CohGGm(U)$ consisting of direct sums of objects of the form
\[
\cS_U(C,\cT_\omega)[n]\la  -n\ra
\qquad\text{with $C \subset U$, $\omega \in \Omega_C$, and $n \in \Z$}
\]
is a silting subcategory.
\end{thm}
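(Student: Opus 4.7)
The plan is to proceed by induction on the number of $G$-orbits in $U$. The base case is $U = C$, a single orbit; here $\cS_U(C,\cT_\omega)$ is simply $\cT_\omega$ up to a shift by $[n]\la -n\ra$, so the claim reduces via~\eqref{eqn:orbit-equiv} to Proposition~\ref{prop:tilting-silt}. For the inductive step, fix a closed $G$-orbit $C_0 \subset U$, set $V = U \smallsetminus C_0$, and assume the theorem is known on $V$. It then suffices to verify two conditions: $\Ext$-vanishing between the prescribed indecomposables, and generation of $\Db\CohGGm(U)$ as a triangulated category.

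Unwinding the grading shifts $[n]\la -n\ra$, the $\Ext$-vanishing amounts to showing
\[
\Hom(\cS_U(C,\cT_\omega), \cS_U(C',\cT_\upsilon)[j]\la l\ra) = 0 \quad \text{whenever } j + l > 0.
\]
I would split this into four cases according to whether $C$ or $C'$ equals $C_0$. When $C = C' = C_0$ the vanishing is immediate from Proposition~\ref{prop:nil-extend} combined with~\eqref{eqn:silt-closed-defn}. When $C = C_0$ and $C' \ne C_0$, the object $\cS_U(C_0,\cT_\omega)[-j]\la -l\ra$ is, up to shift, an $i_{C_0*}\cT_\omega$-type generator of $\Db_{C_0}\CohGGm(U)_{\ge j+l} \subset \Db_{C_0}\CohGGm(U)_{\ge 1}$, so the Hom vanishes by the perpendicularity property~(vi) of Lemma~\ref{lem:silt-main}; the case $C \ne C_0 = C'$ is symmetric, using the dual perpendicularity.

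The main obstacle is the last case, $C, C' \ne C_0$, where one wants to import the inductive vanishing over $V$. By property~(v) of Lemma~\ref{lem:silt-main}, the restrictions to $V$ are $\cS_V(C,\cT_\omega)$ and $\cS_V(C',\cT_\upsilon)$, so induction supplies the vanishing on $V$. To lift this to $U$, I would rewrite $\cS_U(C',\cT_\upsilon)[j]\la l\ra = \cG[1]$ with $\cG = \cS_U(C',\cT_\upsilon)[j-1]\la l\ra$. Since $j + l \ge 1$, a short computation with the explicit description of $\Db_{C_0}\CohGGm(U)_{\ge 1}$ shows that $\cG$ still lies in $\Db_{C_0}\CohGGm(U)_{\ge 1}^\perp$, so Lemma~\ref{lem:restrict-surj} (applied with $n = 1$) delivers an \emph{isomorphism} between the Hom on $U$ and the corresponding Hom on $V$, which vanishes by induction.

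For generation, let $\cC_U \subset \Db\CohGGm(U)$ denote the triangulated subcategory generated by the $\cS_U(C,\cT_\omega)[n]\la -n\ra$. The formula~\eqref{eqn:silt-closed-defn} together with Proposition~\ref{prop:nil-extend} gives $\Db_{C_0}\CohGGm(U) \subset \cC_U$. By property~(v) and the induction hypothesis, the restrictions of the remaining $\cS_U(C,\cT_\omega)$ generate $\Db\CohGGm(V)$, so the essential image of $\cC_U$ under the Verdier quotient $\Db\CohGGm(U)/\Db_{C_0}\CohGGm(U) \cong \Db\CohGGm(V)$ is all of $\Db\CohGGm(V)$. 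For any $\cF \in \Db\CohGGm(U)$, choose $\tilde\cF \in \cC_U$ with $\tilde\cF|_V \cong \cF|_V$; the resulting roof of morphisms between $\cF$ and $\tilde\cF$ in $\Db\CohGGm(U)$ has all cones in $\Db_{C_0}\CohGGm(U) \subset \cC_U$, forcing $\cF \in \cC_U$.
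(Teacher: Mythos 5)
Your proposal is correct and follows essentially the same route as the paper's proof: induction on the number of orbits, removing a closed orbit $C_0$, and a four-case analysis of the $\Hom$-vanishing using Proposition~\ref{prop:nil-extend} together with properties~\eqref{it:s-open} and~\eqref{it:s-perp} of Lemma~\ref{lem:silt-main} and Lemma~\ref{lem:restrict-surj}. The only differences are cosmetic: you spell out the shift bookkeeping (absorbing $[j]\la l\ra$ so Lemma~\ref{lem:restrict-surj} applies with a strictly positive shift) and the generation step via the Verdier quotient, which the paper leaves implicit.
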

\begin{proof}
It is easy to see that the collection of objects $\{ \cS_U(C,\cT_\omega)[n]\la -n\ra\}$ generates $\Db\CohGGm(U)$ as a triangulated category.  Thus, as in Propositions~\ref{prop:lie-extend} and~\ref{prop:nil-extend}, the result comes down to showing that
\begin{equation}\label{eqn:silt-ext}
\Hom(\cS_U(C,\cT_\omega), \cS_U(C',\cT_{\omega'})[n]\la k\ra) = 0
\qquad\text{if $n + k > 0$.}
\end{equation}
To prove this, we proceed by induction on the number of orbits in $U$.  If $U$ is empty, there is nothing to prove.  Otherwise, let $C_0$ be an orbit that is closed in $U$, and let $U' = U \smallsetminus C_0$.
We consider various cases as follows:
\begin{itemize}
\item If $C = C' = C_0$, then, in view of~\eqref{eqn:silt-closed-defn}, the claim~\eqref{eqn:silt-ext} is just part of Proposition~\ref{prop:nil-extend}.
\item If $C = C_0$ but $C' \subset U'$, then~\eqref{eqn:silt-ext} holds because $\cS_U(C',\cT_{\omega'})$ satisfies property~\eqref{it:s-perp} from Lemma~\ref{lem:silt-main}.
\item If $C \subset U'$ but $C' = C_0$, then~\eqref{eqn:silt-ext} holds because $\cS_U(C,\cT_{\omega})$ satisfies property~\eqref{it:s-perp} from Lemma~\ref{lem:silt-main}.
\item Finally, if both $C$ and $C'$ are contained in $U'$, then Lemma~\ref{lem:silt-main}\eqref{it:s-perp} and Lemma~\ref{lem:restrict-surj} together tell us that restriction to $U'$ gives an isomorphism
\[
\Hom(\cS_U(C,\cT_\omega), \cS_U(C',\cT_{\omega'})[n]\la k\ra)
\simto
\Hom(\cS_{U'}(C,\cT_\omega), \cS_{U'}(C',\cT_{\omega'})[n]\la k\ra)
\]
whenever $n +k > 0$.  The right-hand side vanishes by induction.\qedhere
\end{itemize}
\end{proof}

The silting subcategory described by the preceding theorem is denoted by
\[
\Silt(U) \subset \Db\CohGGm(U).
\]
Note that for any $G$-stable closed subset $Z \subset U$, the collection $\{ \cS_U(C,\cT_\omega)[n]\la -n\ra \mid C \subset Z \}$ generates $\Db_Z\CohGGm(U)$ as a triangulated category.  It follows immediately that the category
\[
\Silt_Z(U) = \Silt(U) \cap \Db_Z\CohGGm(U)
\]
is a silting subcategory of $\Db_Z\CohGGm(U)$.

\begin{lem}\label{lem:silt-open}
Let $V \subset U \subset \cN$ be two $G$-stable open subsets, and let $Z \subset U$ be a $G$-stable subset that is closed in $U$.  The restriction functor
\[
\Db_Z\CohGGm(U) \to \Db_{Z \cap V}\CohGGm(V)
\qquad\text{given by}\qquad
\cF \mapsto \cF|_V
\]
is co-$t$-exact.  Moreover, if $\cF \in \Db_Z\CohGGm(U)_{\ge 0}$ and $\cG \in \Db_Z\CohGGm(U)_{\le 0}$, then the map
\begin{equation}\label{eqn:silt-open-surj}
\Hom(\cF,\cG) \to \Hom(\cF|_V, \cG|_V)
\end{equation}
is surjective.
\end{lem}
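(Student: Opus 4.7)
The plan is to argue both assertions simultaneously by induction on the number of $G$-orbits in $U \smallsetminus V$.  If $U = V$ there is nothing to prove; otherwise, since $U \smallsetminus V$ is a closed $G$-stable subset of $U$, we pick a $G$-orbit $C_0 \subset U \smallsetminus V$ that is closed in $U \smallsetminus V$, hence closed in $U$, and set $U' = U \smallsetminus C_0$.  Then $V \subseteq U' \subseteq U$ and $U' \smallsetminus V$ contains one fewer orbit than $U \smallsetminus V$.  Since restriction is transitive and compositions of surjections are surjective, both assertions for the pair $(U,V)$ follow from the corresponding assertions for $(U,U')$ together with the inductive hypothesis applied to $(U', V)$.

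For co-$t$-exactness in the case $V = U'$: by Theorem \ref{thm:orbitwise} the coheart $\Silt_Z(U)$ is generated (by direct sums) by the indecomposables $\cS_U(C, \cT_\omega)[n]\la -n\ra$ with $C \subset Z$, and since restriction is triangulated it preserves the extension-closures defining $\Db_Z\CohGGm(U)_{\ge 0}$ and $\Db_Z\CohGGm(U)_{\le 0}$, so it suffices to understand how these generators behave.  For $C \ne C_0$, Lemma \ref{lem:silt-main}\eqref{it:s-open} yields $\cS_U(C, \cT_\omega)|_{U'} \cong \cS_{U'}(C, \cT_\omega)$, a silting object of $\Silt_{Z \cap U'}(U')$.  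For $C = C_0$ (which requires $C_0 \subset Z$), formula \eqref{eqn:silt-closed-defn} exhibits $\cS_U(C_0, \cT_\omega)$ as a shift of $i_{C_0*}\cT_\omega$, which is supported on $C_0 \subset U \smallsetminus U'$ and hence restricts to zero.

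For $\Hom$-surjectivity in the case $V = U'$: the plan is to apply Lemma \ref{lem:restrict-surj} verbatim with $C$ replaced by $C_0$, so the task reduces to verifying the two hypotheses
\[
\cF \in {}^\perp\Db_{C_0}\CohGGm(U)_{\le -1} \quad\text{and}\quad \cG \in \Db_{C_0}\CohGGm(U)_{\ge 1}^\perp.
\]
Both vanishing conditions are preserved under extensions, so it suffices to check them on generators.  The generators of $\Db_{C_0}\CohGGm(U)_{\le -1}$ (resp.\ $\Db_{C_0}\CohGGm(U)_{\ge 1}$) are $i_{C_0*}\cT_\omega[n]\la k\ra$ with total degree $n+k \ge 1$ (resp.\ $n+k \le -1$), and by \eqref{eqn:silt-closed-defn} these are shifts of $\cS_U(C_0, \cT_\omega)$ of the same total degree, since the intervening shift $[\tfrac12\codim C_0]\la -\tfrac12\codim C_0\ra$ is of total degree zero.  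On the other hand, $\cF$ and $\cG$ are generated under extensions by $\cS_U(C, \cT_\omega)[n]\la k\ra$ with $C \subset Z$ and total degree $\le 0$, respectively $\ge 0$.  The silting $\Hom$-vanishing of Theorem \ref{thm:orbitwise} now yields both orthogonalities (and they are automatic when $C_0 \not\subset Z$, since then the supports are disjoint from $C_0$).

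The main point requiring care is the total-degree bookkeeping: confirming that the shift relating $i_{C_0*}\cT_\omega$ to $\cS_U(C_0,\cT_\omega)$ indeed has total degree zero, so that the strict inequalities needed to apply Theorem \ref{thm:orbitwise} survive intact.  Once this is checked, both assertions follow by a straightforward inductive reduction combined with a single invocation of Lemma \ref{lem:restrict-surj}; no additional geometric input is required.
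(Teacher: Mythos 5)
Your proof is correct and follows essentially the same route as the paper: deduce co-$t$-exactness from Lemma~\ref{lem:silt-main}\eqref{it:s-open}, reduce the surjectivity statement by induction to the case where $V$ is the complement of a single closed orbit $C_0$, verify that $\cF \in {}^\perp\Db_{C_0}\CohGGm(U)_{\le -1}$ and $\cG \in \Db_{C_0}\CohGGm(U)_{\ge 1}^\perp$, and invoke Lemma~\ref{lem:restrict-surj}. The only (harmless) difference is that you check the two orthogonality hypotheses by a direct total-degree computation on the generators via Theorem~\ref{thm:orbitwise} and~\eqref{eqn:silt-closed-defn}, where the paper simply cites Lemma~\ref{lem:silt-main}\eqref{it:s-perp}; your version also makes the $C = C_0$ case explicit.
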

\begin{proof}
The co-$t$-exactness of the restriction functor is an immediate consequence of Lemma~\ref{lem:silt-main}\eqref{it:s-open}.  For the surjectivity of~\eqref{eqn:silt-open-surj}, by induction on the number of orbits in $U \smallsetminus V$, we can reduce to the case where $V$ is the complement of a single closed orbit $C_0$.  In this case, by Lemma~\ref{lem:silt-main}\eqref{it:s-perp}, the assumption that $\cF \in \Db_Z\CohGGm(U)_{\ge 0}$ implies that $\cF$ lies in ${}^\perp\Db_{C_0}\CohGGm(U)_{\le -1}$.  Similarly, $\cG$ lies in $\Db_{C_0}\CohGGm(U)_{\ge 1}^\perp$.  By Lemma~\ref{lem:restrict-surj}, we are done.
\end{proof}

\begin{lem}\label{lem:silt-closed-cond}
Let $U \subset \cN$ be a $G$-stable open subset, and let $Z \subset U$ be a $G$-stable subset that is closed in $U$.  For $\cF \in \Db\CohGGm(U)$, the following three conditions are equivalent:
\begin{enumerate}
\item For each orbit $C' \subset Z$, we have $j_{C'}^!j_*\cF \in \Db\CohGGm(C')_{\le 0}$.\label{it:scc-pull}
\item We have $\Hom(\cG,\cF) = 0$ for all $\cG \in \Db_Z\CohGGm(U)_{\ge 1}$.\label{it:scc-hom}
\item We have $\Hom(\cS_U(C',\cT_\omega)[n]\la k\ra, \cF) = 0$ for $C' \subset Z$, $\omega \in \Omega_C$, and $n +k  \le -1$.\label{it:scc-obj}
\end{enumerate}
Similarly, the following three conditions are equivalent:
\begin{enumerate}
\item For each orbit $C' \subset Z$, we have $j_{C'}^*j_*\cF \in \Db\CohGGm(C')_{\ge 0}$.
\item We have $\Hom(\cF,\cG) = 0$ for all $\cG \in \Db_Z\CohGGm(U)_{\le -1} $.
\item We have $\Hom(\cF,\cS_U(C',\cT_\omega)[n]\la k\ra) = 0$ for $C' \subset Z$, $\omega \in \Omega_C$, and $n+k \ge 1$.
\end{enumerate}
\end{lem}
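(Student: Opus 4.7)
\smallskip

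\textbf{Plan.} The two sets of equivalences are formally dual under Serre--Grothendieck duality $\D$: using Lemmas~\ref{lem:nilp-duality} and~\ref{lem:repsilt-duality} (which together imply that $\D$ permutes the indecomposable silting objects, sending $\cS_U(C',\cT_\omega)$ to a shift of $\cS_U(C',\cT_\omega^*)$ so as to swap the sign of the ``level'' $n+k$) together with the commutation $\D\circ j^!\cong j^*\circ\D$ from~\eqref{eqn:serre-groth-open}, one checks that the first set's conditions on $\cF$ become the second set's conditions on $\D\cF$. It therefore suffices to prove the first set.

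The equivalence $(2)\Leftrightarrow(3)$ is an instance of the standard silting formalism. Each silting shift $\cS_U(C',\cT_\omega)[n]\la k\ra$ with $C'\subset Z$ and $n+k\le -1$ lies in $\Db_Z\CohGGm(U)_{\ge 1}$ (it is in the coheart shifted to level $-(n+k)\ge 1$), yielding $(2)\Rightarrow(3)$. For the converse, Theorem~\ref{thm:orbitwise} together with \cite[Proposition~2.5]{ah:cot1} identifies $\Db_Z\CohGGm(U)_{\ge 1}$ as the extension closure of this family of silting shifts, so vanishing of the cohomological functor $\Hom({-},\cF)$ on the generators propagates (via long exact sequences) to vanishing on the entire subcategory.

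For $(1)\Leftrightarrow(3)$ I would work one orbit at a time. Fix $C'\subset Z$. By Proposition~\ref{prop:tilting-silt} applied to the co-$t$-structure on $\Db\CohGGm(C')$, the condition $j_{C'}^!j_*\cF\in\Db\CohGGm(C')_{\le 0}$ is equivalent to the vanishing of $\Hom(\cT_{\omega'}[n']\la k'\ra,\ j_{C'}^!j_*\cF)$ for every $\omega'\in\Omega_{C'}$ and $n'+k'\le -1$, which by the $(j_{C'*},j_{C'}^!)$-adjunction rewrites as the vanishing of $\Hom(j_{C'*}\cT_{\omega'}[n']\la k'\ra,\ j_*\cF)$. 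The $C'$-component of $(3)$ is instead the vanishing of $\Hom(\cS_U(C',\cT_{\omega'})[n]\la k\ra,\ \cF)$ for $n+k\le -1$. Property~\eqref{it:s-tilt} of Lemma~\ref{lem:silt-main} identifies $\cS_U(C',\cT_{\omega'})|_{\cN_{C'}\cap U}$ with $i_{C'*}\cT_{\omega'}[-\tfrac{d}{2}]\la\tfrac{d}{2}\ra$, where $d=\codim C'$; after substituting $n'=n-\tfrac{d}{2}$, $k'=k+\tfrac{d}{2}$ the two families of conditions cover the same range of indices.

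The main obstacle is matching the $\Hom$-groups themselves, not merely their indexing ranges. When $C'$ is closed in $U$ the identification is direct: $\cS_U(C',\cT_{\omega'})$ equals $i_{C'*}\cT_{\omega'}[-\tfrac{d}{2}]\la\tfrac{d}{2}\ra$ on the nose, and both vanishing conditions reduce via adjunction to $\Hom(\cT_{\omega'}[n']\la k'\ra,\ i_{C'}^!\cF)=0$. In general $\cS_U(C',\cT_{\omega'})$ is a nontrivial minimal extension across orbits in $\partial C'\cap U$, controlled by the orthogonality property~\eqref{it:s-perp} of Lemma~\ref{lem:silt-main}. I would reduce to the closed case by restricting to the $G$-stable open $V=U\setminus(\partial C'\cap U)$ in which $C'$ is closed: Lemma~\ref{lem:silt-main}\eqref{it:s-open} gives $\cS_U(C',\cT_{\omega'})|_V\cong\cS_V(C',\cT_{\omega'})$, and combining~\eqref{it:s-perp} with Lemmas~\ref{lem:restrict-surj} and~\ref{lem:perp-crit} would show that the restriction map
\[
\Hom_U\bigl(\cS_U(C',\cT_{\omega'})[n]\la k\ra,\,\cF\bigr)\longrightarrow\Hom_V\bigl(\cS_V(C',\cT_{\omega'})[n]\la k\ra,\,\cF|_V\bigr)
\]
is an isomorphism in the range $n+k\le -1$. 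The closed-orbit identification on $V$ (with the adjunction expression $\Hom(j_{C'*}\cT_{\omega'}[n']\la k'\ra,\ j_*\cF)$) would then complete the argument.
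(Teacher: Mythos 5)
Your duality reduction of the second triple to the first, and your argument for the equivalence of (2) and (3), are fine (the latter is exactly the paper's argument, and for the duality step the relevant input is Lemma~\ref{lem:total-duality}). The gap is in (1)$\Leftrightarrow$(3): you propose to prove it \emph{one orbit at a time}, i.e.\ that for a fixed orbit $C' \subset Z$ the condition $j_{C'}^! j_* \cF \in \Db\CohGGm(C')_{\le 0}$ is equivalent to the vanishing of $\Hom(\cS_U(C',\cT_{\omega'})[n]\la k\ra, \cF)$ for all $n+k \le -1$. This per-orbit equivalence is \emph{false} when $C'$ is not closed in $U$: the first condition only constrains $\cF|_{U \cap \cN_{C'}}$, while the second sees $\cF$ along $\partial C' \cap U$, because $\cS_U(C',\cT_{\omega'})$ is supported on all of $\overline{C'} \cap U$. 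For instance, for $G = \mathrm{SL}_2$ with $C'$ the regular orbit, take $\cF = i_{0*}N[a]\la b\ra$ supported on the zero orbit: since $i_0^*\cS(C',\cT_{\omega'}) \ne 0$, one can choose the shift and twist so that $\Hom(\cS(C',\cT_{\omega'})[n]\la k\ra,\cF) \ne 0$ for some $n+k\le -1$, even though $j_{C'}^!j_*(\cF|_{\cN_{C'}}) = 0$. Correspondingly, the step where you invoke Lemma~\ref{lem:restrict-surj} to show that restriction to $V$ induces an isomorphism on these $\Hom$-groups cannot go through: that lemma needs not only the source to lie in ${}^\perp\Db_{C_0}\CohGGm(U)_{\le -1}$ (which Lemma~\ref{lem:silt-main}\eqref{it:s-perp} supplies), but also the \emph{target} $\cF$ to lie in $\Db_{C_0}\CohGGm(U)_{\ge 1}^\perp$ for the closed orbit(s) being removed --- and that is precisely the condition of the lemma at those boundary orbits, which a per-orbit argument does not have at its disposal.

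The conditions at the various orbits are genuinely entangled, and the paper handles this by induction on the number of orbits in $Z$: if $Z$ is a single closed orbit $C_0$, condition (2) literally says $\cF \in \Db_{C_0}\CohGGm(U)_{\ge 1}^\perp$, and Lemmas~\ref{lem:perp-crit} and~\ref{lem:pregluing} together with~\eqref{eqn:pregluing-crit} convert this into condition (1) (note the paper deliberately routes through $j_*$ into $\fg_{C_0}$ rather than through $i_{C_0}^!\cF$, which on the singular $U$ only lives in $D^+$). In the inductive step one removes a closed orbit $C_0 \subset Z$, observes that the $C_0$-part of either condition yields $\cF \in \Db_{C_0}\CohGGm(U)_{\ge 1}^\perp$, and only \emph{then} applies Lemma~\ref{lem:restrict-surj} --- with this hypothesis on $\cF$ now available --- to identify $\Hom(\cS_U(C',\cT_\omega)[n]\la k\ra,\cF)$ with the corresponding group on $U \smallsetminus C_0$ for the remaining orbits $C'$. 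If you reorganize your argument so that the conditions at the closed orbits are established first and then fed into Lemma~\ref{lem:restrict-surj}, your outline becomes the paper's proof; as written, the key isomorphism you assert is unjustified and in general fails.
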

\begin{proof}
We will prove the equivalence of the first set of three statements.  The proof for the second set of three statements is similar.  In the induced co-$t$-structure on $\Db_Z\CohGGm(U)$, the category $\Db_Z\CohGGm(U)_{\ge 1}$ is generated under extensions by objects of the form $\cS_U(C',\cT_\omega)[n]\la k\ra$ with $C' \subset Z$ and $n+k \le -1$.  This observation yields the equivalence of statements~\eqref{it:scc-hom} and~\eqref{it:scc-obj}.

To prove the equivalence of~\eqref{it:scc-pull} with the other two conditions, we proceed by induction on the number of orbits in $Z$.  If $Z$ consists of a single closed orbit $C_0$, then condition~\eqref{it:scc-hom} just says that $\cF \in \Db_{C_0}\CohGGm(U)_{\ge 1}^\perp$.  Using Lemmas~\ref{lem:perp-crit} and~\ref{lem:pregluing} together with~\eqref{eqn:pregluing-crit}, we see that this condition is equivalent to~\eqref{it:scc-pull}.

If $Z$ contains more than one orbit, choose a closed orbit $C_0 \subset Z$, and let $V = U \smallsetminus C_0$.  Then, by induction, condition~\eqref{it:scc-pull} is equivalent to
\begin{enumerate}
\setcounter{enumi}{3}
\item We have $\cF \in \Db_{C_0}\CohGGm(U)_{\ge 1}^\perp$, and $\Hom(\cS_V(C',\cT_\omega)[n]\la k\ra, \cF|_V) = 0$ for $C' \subset Z \cap V$, $\omega \in \Omega_C$, and $n +k  \le -1$.\label{it:scc-ind}
\end{enumerate}
By Lemma~\ref{lem:restrict-surj}, the map $\Hom(\cS_U(C',\cT_\omega)[n]\la k\ra, \cF) \to \Hom(\cS_V(C',\cT_\omega)[n]\la k\ra, \cF|_V)$ is an isomorphism when $n+k \le -1$.  It follows that condition~\eqref{it:scc-ind} is equivalent to~\eqref{it:scc-obj}.
\end{proof}

\begin{lem}
\phantomsection\label{lem:total-duality}
\begin{enumerate}
\item For any $G$-stable open subset $U \subset \cN$, the category $\Silt(U) \subset \Db\CohGGm(U)$ is preserved by the Serre--Grothendieck duality functor.  Specifically, we have\label{it:total-sg}
\[
\D(\cS_U(C,\cT_\omega)[n]\la -n\ra) \cong \cS_U(C,\cT_\omega^*)[-n]\la n\ra.
\]
\item The category $\Silt(\cN) \subset \Db\CohGGm(\cN)$ is preserved by the opposition functor $({-})^\sigma: \Db\CohGGm(\cN) \to \Db\CohGGm(\cN)$.  Specifically, we have\label{it:total-opp}
\[
(\cS(C,\cT_\omega)[n]\la -n\ra)^\sigma \cong \cS(C,\cT_\omega^*)[n]\la -n\ra.
\]
\end{enumerate}
\end{lem}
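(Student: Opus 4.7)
The strategy is to check, in each case, that the proposed image object satisfies the axiomatic characterization of $\cS_U(C,\cT_\omega^*)$ or $\cS(C,\cT_\omega^*)$ given by Lemma~\ref{lem:silt-main}. Both $\D$ and $({-})^\sigma$ commute with $[n]$ and $\la k\ra$ (with the expected sign change for $\D$), so it suffices to handle the case $n = 0$; the general statement follows by applying these shifts.

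For part~\eqref{it:total-sg}, set $\cF := \D(\cS_U(C,\cT_\omega))$. Indecomposability is immediate because $\D$ is an autoequivalence, and preservation of set-theoretic support is standard. For the restriction to $\cN_C \cap U$, set $d = \codim C$ and commute $\D$ past $i_{C*}$ using~\eqref{eqn:serre-groth-orbit}, then apply Lemma~\ref{lem:repsilt-duality}\eqref{it:repsilt-sg} to compute
\[
\D\bigl(i_{C*}\cT_\omega[-\tfrac{1}{2}d]\la \tfrac{1}{2}d\ra\bigr) \cong i_{C*}\cT_\omega^*[-d+\tfrac{1}{2}d]\la d-\tfrac{1}{2}d\ra \cong i_{C*}\cT_\omega^*[-\tfrac{1}{2}d]\la \tfrac{1}{2}d\ra,
\]
so the codimension shift is preserved.

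For the pullback condition, one combines the compatibilities~\eqref{eqn:serre-groth-compat} and~\eqref{eqn:serre-groth-open}, applied to the open/closed embeddings relating $U$, $\fg_C$, and $\fg_{C'}$, to produce a natural isomorphism $\D \circ j_{C'}^! \circ j_* \cong j_{C'}^* \circ j_* \circ \D$, and symmetrically with $!$ and $*$ interchanged. Hence
\[
j_{C'}^! j_* \cF \cong \D\bigl(j_{C'}^* j_* \cS_U(C,\cT_\omega)\bigr),
\]
which by Lemma~\ref{lem:silt-main}\eqref{it:s-pullback} has argument in $\Db\CohGGm(C')_{\ge 0}$. Now a brief calculation with the silting generators $\cT_\omega[n]\la k\ra$ (identical in spirit to the one in Lemma~\ref{lem:repsilt-duality}\eqref{it:repsilt-sg}, tracking the parity of $n+k$) shows that $\D_{C'}$ interchanges $\Db\CohGGm(C')_{\ge 0}$ and $\Db\CohGGm(C')_{\le 0}$ without any residual shift, giving the desired containment $j_{C'}^! j_* \cF \in \Db\CohGGm(C')_{\le 0}$. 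The condition on $j_{C'}^* j_* \cF$ is symmetric.

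Part~\eqref{it:total-opp} is entirely parallel. Indecomposability and support preservation for $\cG := \cS(C,\cT_\omega)^\sigma$ follow from \cite[Corollary~4.2]{ah:pgl3}, the restriction to $\cN_C$ is computed using~\eqref{eqn:sigma-orbit} together with Lemma~\ref{lem:repsilt-duality}\eqref{it:repsilt-opp} (no codimension-shift bookkeeping is required this time, since $({-})^{\id \times \sigma_{C'}}$ commutes with $[n]$ and $\la k\ra$), and the pullback condition is handled as in the previous paragraph. The essential difference is that, by Lemma~\ref{lem:repsilt-duality}\eqref{it:repsilt-opp}, the functor $({-})^{\id \times \sigma_{C'}}$ preserves each of $\Db\CohGGm(C')_{\le 0}$ and $\Db\CohGGm(C')_{\ge 0}$ separately, so the two pullback containments transfer directly rather than being swapped. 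The principal obstacle throughout is the compatibility bookkeeping in part~\eqref{it:total-sg}: reconciling the different ambient spaces $\fg_C$ and $\fg_{C'}$ (with $\fg_C$ open in $\fg_{C'}$) and verifying that the combined compatibility $\D \circ j_{C'}^! j_* \cong j_{C'}^* j_* \circ \D$ holds with no hidden shift. Once that technical point is in hand, both statements are direct consequences of Lemma~\ref{lem:silt-main} and Lemma~\ref{lem:repsilt-duality}.
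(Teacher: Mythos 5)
Your part~\eqref{it:total-sg} is fine and is essentially the paper's argument: check the characterizing conditions of Lemma~\ref{lem:silt-main} for $\D\cS_U(C,\cT_\omega)$, using \eqref{eqn:serre-groth-compat}, \eqref{eqn:serre-groth-open}, \eqref{eqn:serre-groth-orbit} to commute $\D$ past $j_*$ and to exchange $j_{C'}^!$ with $j_{C'}^*$, and Lemma~\ref{lem:repsilt-duality}\eqref{it:repsilt-sg} to see that $\D_{C'}$ swaps $\Db\CohGGm(C')_{\ge 0}$ and $\Db\CohGGm(C')_{\le 0}$ with no residual shift.

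The gap is in part~\eqref{it:total-opp}, where you say ``the pullback condition is handled as in the previous paragraph.'' That step for $\D$ rested on the stated compatibilities of Serre--Grothendieck duality with $j_*$, $j_{C'}^*$, $j_{C'}^!$; no analogue of these is available for the opposition functor. The only tool the paper provides is \eqref{eqn:sigma-orbit}, which applies to a single equivariant \emph{sheaf} whose restriction to $\cN_{C'}$ is scheme-theoretically supported on $C'$ --- that is why, even for the restriction-to-$\cN_C$ step, the paper has to apply it to the lone cohomology sheaf $\cH^{\frac{1}{2}\codim C}(\cS(C,\cT_\omega))$. The complexes $j_{C'}^*j_*\cS(C,\cT_\omega)$ and $j_{C'}^!j_*\cS(C,\cT_\omega)$ for $C' \subset \partial C$ are not of this form (their cohomology lives on infinitesimal neighborhoods of $C'$), and $({-})^\sigma$ is not even defined in the paper on $\Db\CohGGm(\fg_{C'})$, so the isomorphism you would need, namely $j_{C'}^! j_*(\cF^\sigma) \cong (j_{C'}^! j_* \cF)^{\id\times\sigma_{C'}}$, is unproved and not a formal consequence of anything cited. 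The paper circumvents exactly this point with a different argument: it proves condition~\eqref{it:s-pullback} of Lemma~\ref{lem:silt-main} for $\cS(C,\cT_\omega)^\sigma$ by induction on $C$ in the closure order, using that $({-})^\sigma$ is an autoequivalence, the inductive identification $\cS(C',\cT_{\omega'})^\sigma \cong \cS(C',\cT_{\omega'}^*)$ for $C' \subset \partial C$, the Ext-vanishing of Theorem~\ref{thm:orbitwise}, and then Lemma~\ref{lem:silt-closed-cond} to convert the resulting Hom-vanishing against all silting objects supported on $\overline{C'}$ into the two pullback containments. To repair your proof you should either establish the derived compatibility of $({-})^\sigma$ with $j_*$ and $j_{C'}^*$, $j_{C'}^!$ (extra work not in the paper), or replace that paragraph with the induction/Hom-vanishing argument just described.
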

\begin{proof}
It is enough to prove these statements in the special case $n = 0$.  

\eqref{it:total-sg}~It is clear that $\D\cS_U(C,\cT_\omega)$ is indecomposable and supported on $\overline{C} \cap U$.  By Lemma~\ref{lem:nilp-duality}, we have
\begin{align*}
(\D\cS_U(C,\cT_\omega))|_{\cN_C \cap U}
&\textstyle\cong \D(i_{C*}\cT_\omega[-\frac{1}{2}\codim C]\la \frac{1}{2}\codim C\ra)  \\
&\textstyle\cong i_{C*}\cT_\omega^*[-\frac{1}{2}\codim C]\la \frac{1}{2}\codim C\ra.
\end{align*}
Finally, if $C' \subset \partial C \cap U$, then by~\eqref{eqn:serre-groth-compat}, \eqref{eqn:serre-groth-open}, and Lemma~\ref{lem:repsilt-duality}, we find that
\begin{align*}
j_{C'}^!j_*\D(\cS_U(C,\cT_\omega)) &\in \Db\CohGGm(C')_{\le 0}, \\
j_{C'}^*j_*\D(\cS_U(C,\cT_\omega)) &\in \Db\CohGGm(C')_{\ge 0}.
\end{align*}
We have shown that $\D\cS_U(C,\cT_\omega)$ satisfies the conditions from Lemma~\ref{lem:silt-main} that uniquely characterize $\cS_U(C,\cT_\omega^*)$.  Part~\eqref{it:total-sg} of the lemma follows.

\eqref{it:total-opp}~It is clear that $\cS(C,\cT_\omega)^\sigma$ is indecomposable, and~\cite[Corollary~4.2]{ah:pgl3} implies that it is supported on $\overline{C}$.  The claim that
\[
\textstyle
(\cS(C,\cT_\omega))|_{\cN_C} \cong i_{C*}\cT_\omega^*[-\frac{1}{2}\codim C]\la \frac{1}{2}\codim C\ra
\]
follows from~\eqref{eqn:sigma-orbit} applied to the coherent sheaf $\cH^{\frac{1}{2}\codim C}(\cS(C,\cT_\omega))$, along with Lemma~\ref{lem:tilting-oppo}.  We have shown that $\cS(C,\cT_\omega)^\sigma$ satisfies the first three conditions from Lemma~\ref{lem:silt-main} characterizing $\cS(C,\cT_\omega^*)$.

We will now prove that $\cS(C,\cT_\omega)^\sigma \cong \cS(C,\cT_\omega^*)$ by induction on $C$ with respect to the closure partial order on nilpotent orbits.  In view of the preceding paragraph, it is enough to check condition~\eqref{it:s-pullback} from Lemma~\ref{lem:silt-main} for $C' \subset \partial C$.  If $C$ is the zero nilpotent orbit, then  $\partial C$ is empty, and there is nothing to prove.  Now suppose $C$ is not the zero orbit.  By induction and the fact that $({-})^\sigma$ is an equivalence of categories, we have
\begin{align*}
\Hom(\cS(C,\cT_\omega)^\sigma, \cS(C',\cT_{\omega'})^\sigma[n]\la k\ra)
&\cong \Hom(\cS(C,\cT_\omega), \cS(C',\cT_{\omega'})[n]\la k\ra) \\
&\cong \Hom(\cS(C,\cT_\omega)^\sigma, \cS(C',\cT_{\omega'}^*)[n]\la k\ra).
\end{align*}
If $n + k \ge 1$, the second expression above vanishes.  As $\omega$ varies over $X_{C'}^+$, the object $\cS(C',\cT_{\omega'}^*)$ varies over all silting objects whose support is $\overline{C'}$.  By Lemma~\ref{lem:silt-closed-cond}, the vanishing of the third expression above implies that
\[
j_{C'}^*j_*(\cS(C,\cT_\omega)^\sigma) \in \Db\CohGGm(C')_{\ge 0}
\qquad\text{for all $C' \subset \partial C$}.
\]
A similar argument shows that 
\[
j_{C'}^!j_*(\cS(C,\cT_\omega)^\sigma) \in \Db\CohGGm(C')_{\le 0}
\qquad\text{for all $C' \subset \partial C$},
\]
and thus $\cS(C,\cT_\omega)^\sigma$ satisfies condition~\eqref{it:s-pullback} from Lemma~\ref{lem:silt-main}.
\end{proof}

\section{Silting objects and the Lusztig--Vogan bijection}
\label{sec:lv}

In Theorem~\ref{thm:orbitwise} above, we have defined a co-$t$-structure on $\Db\CohGGm(\cN)$, which we will call the \emph{orbitwise co-$t$-structure}.  On the other hand, in~\cite{ah:cot1}, the authors defined another co-$t$-structure on $\Db\CohGGm(\cN)$, called the \emph{supportive co-$t$-structure}.  In this section, we will briefly review the definition of the supportive co-$t$-structure, and then we will prove that these two co-$t$-structures coincide.

\subsection{Review of the Lusztig--Vogan bijection}

Let $\PCoh^{\Gm}(\cN)$ be the category of $G \times \Gm$-equivariant perverse-coherent sheaves on $\cN$ (see~\cite{a:pcsnc, bez:qes}).  Recall that this is the heart of a $t$-structure on $\Db\CohGGm(\cN)$.  Furthermore, every object in this abelian category has finite length.  The simple objects can be described in terms of irreducible vector bundles on nilpotent orbits.  Specifically, for $\omega \in \Omega_C$, let $\cL_\omega \in \CohGGm(C)$ be the corresponding irreducible vector bundle.  Then, for each nilpotent orbit $C \subset \cN$ and each $\omega \in \Omega_C$, there is a unique simple object
\[
\IC(C,\cL_\omega) \in \PCoh^{\Gm}(\cN)
\]
that is supported on $\overline{C}$ and satisfies
\[
\textstyle
\IC(C,\cL_\omega)|_{\cN_C} \cong i_{C*}\cL_\omega[-\frac{1}{2}\codim C]\la \frac{1}{2}\codim C\ra.
\]
Moreover, every simple object is isomorphic to some $\IC(C,\cL_\omega)\la n\ra$.

This $t$-structure also admits a description in terms of push-forwards of line bundles along the Springer resoluion $\pi: \tcN \to \cN$, where $\tcN = T^*(G/B)$ is the cotangent bundle of the flag variety of $G$.  Let $\bX$ be the weight lattice of $G$, and let $\bX^+ \subset \bX$ be the set of dominant weights.  Any weight $\lambda \in \bX$ determines a line bundle $\cO_{\tcN}(\lambda) \in \Db\CohGGm(\tcN)$.  Now assume that $\lambda \in \bX^+$.  Let $\delta_\lambda^*$ denote the length of the shortest element $w \in W$ such that $w\lambda \in -\bX^+$, and then set
\[
\oDelta_\lambda = \pi_*\cO_{\tcN}(w_0\lambda)\la \delta_\lambda^*\ra,
\qquad
\onabla_\lambda = \pi_*\cO_{\tcN}(\lambda)\la -\delta_\lambda^*\ra.
\]
According to~\cite{a:pcsnc, bez:qes}, these objects lie in $\PCoh^{\Gm}(\cN)$.  Moreover, for each $\lambda \in \bX^+$, there is a canonical map
\begin{equation}\label{eqn:iota}
\iota_\lambda: \oDelta_\lambda \to \onabla_\lambda
\end{equation}
whose image (in the abelian category $\PCoh^{\Gm}(\cN)$) is a simple object.  That is, there is a unique pair $(C,\omega) \in \Omega$ such that $\iota_\lambda$ factors as
\[
\oDelta_\lambda \twoheadrightarrow \IC(C,\cL_\omega) \hookrightarrow \onabla_\lambda.
\]
(A priori, the image might have been of the form $\IC(C,\cL_\omega)\la n\ra$ for some $n \in \Z$; this integer was determined to be $0$ in~\cite{ah:pgl3}.)  The resulting map
\[
\Theta_\LV: \bX^+ \simto \Omega
\]
is in fact a bijection, known as the \emph{Lusztig--Vogan bijection}.

The usual partial order $\le$ on $\bX^+$ (given by declaring that $\lambda \le \mu$ if $\mu - \lambda$ is a sum of positive roots) is related to the Lusztig--Vogan bijection as follows: if $\IC(C',\cL_{\omega'})\la n\ra$ occurs as a composition factor of $\oDelta_\lambda$ (or $\onabla_\lambda$), then the pair $(C',\omega')$ corresponds under the Lusztig--Vogan bijection to a weight $\mu \le \lambda$.

Now let
\begin{align*}
D(\le \lambda) &= 
\begin{array}{c}
\text{the full triangulated subcategory of $\Db\CohGGm(\cN)$} \\
\text{generated by $\oDelta_\mu\la n\ra$ with $\mu \le \lambda$ and $n \in \Z$.}
\end{array}
\\
&=
\begin{array}{c}
\text{the full triangulated subcategory of $\Db\CohGGm(\cN)$} \\
\text{generated by $\onabla_\mu\la n\ra$ with $\mu \le \lambda$ and $n \in \Z$.}
\end{array}
\end{align*}
(The fact that this category can be defined either in terms of the $\oDelta_\mu$'s or the $\onabla_\mu$'s comes from the theory of quasi-exceptional sets, developed in~\cite{bez:qes, a:pcsnc, ah:cot1}.)  The preceding paragraph can be reformulated as follows: for $\cF \in \Db\CohGGm(\cN)$, we have
\begin{equation}\label{eqn:pcoh-serre}
\cF \in D(\le \lambda)
\Longleftrightarrow
\begin{array}{c}
\text{every composition factor $\IC(C,\cL_\omega)\la n\ra$ of every} \\
\text{${}^p\cH^i(\cF)$ corresponds to a weight $\mu \le \lambda$ under $\Theta_\LV$}
\end{array}
\end{equation}
Here, ${}^p\cH^i({-})$ denotes cohomology with respect to the perverse-coherent $t$-structure.

\subsection{The supportive co-\texorpdfstring{$t$}{t}-structure}

According to~\cite[Proposition~4.3]{ah:cot1} and the remarks following it, the objects $\oDelta_\lambda$ and $\onabla_\lambda$ also determine a co-$t$-structure on $\Db\CohGGm(\cN)$, known as the \emph{supportive co-$t$-structure}, and given by
\begin{align*}
\Db\CohGGm(\cN)^{\supp}_{\ge 0} 
&= 
\begin{array}{@{}c@{}}
\text{the full subcategory generated under extensions and direct} \\
\text{summands by the $\oDelta_\lambda[n]\la k\ra$ for $\lambda \in \bX^+$ and $n+k \le 0$}
\end{array}
\\
\Db\CohGGm(\cN)^{\supp}_{\le 0} 
&= 
\begin{array}{@{}c@{}}
\text{the full subcategory generated under extensions and direct} \\
\text{summands by the $\onabla_\lambda[n]\la k\ra$ for $\lambda \in \bX^+$ and $n+k \ge 0$}
\end{array}
\end{align*}
Moreover,~\cite[Proposition~2.22]{ah:cot1} gives a classification of the indecomposable silting objects for this category: for each $\lambda \in \bX^+$, there is a unique (up to isomorphism) indecomposable silting object
\[
\fS_\lambda \in \Db\CohGGm(\cN)^{\supp}_{\ge 0} \cap \Db\CohGGm(\cN)^{\supp}_{\le 0}
\]
that is characterized by the following two properties: the object $\fS_\lambda$ lies in $D(\le \lambda)$, and the canonical map~\eqref{eqn:iota} factors as
\[
\oDelta_\lambda \to \fS_\lambda \to \onabla_\lambda.
\]
Every indecomposable silting object is isomorphic to $\fS_\lambda[n]\la -n\ra$ for some $\lambda \in \bX^+$ and $n \in \Z$.

\begin{lem}
\phantomsection\label{lem:supp-duality}
\begin{enumerate}
\item The supportive co-$t$-structure is preserved by the Serre--Gro\-then\-dieck duality functor.  Specifically, we have
\[
\D(\fS_\lambda[n]\la -n\ra) \cong \fS_{-w_0\lambda}[-n]\la n\ra.
\]
\item The supportive co-$t$-structure is preserved by the opposition functor
$({-})^\sigma: \Db\CohGGm(\cN) \to \Db\CohGGm(\cN)$.  Specifically, we have
\[
(\fS_\lambda[n]\la -n\ra)^\sigma \cong \fS_{-w_0\lambda}[n]\la -n\ra.
\]
\end{enumerate}
\end{lem}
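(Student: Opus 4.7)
The plan is to reduce the entire lemma to computing $\D$ and $({-})^\sigma$ on the generating objects $\oDelta_\lambda$ and $\onabla_\lambda$, and then invoke the uniqueness characterization of $\fS_\lambda$ recorded after the definition of the supportive co-$t$-structure. As a preliminary step I would verify the identity $\delta^*_{-w_0\lambda} = \delta^*_\lambda$ directly from the definition: the map $w \mapsto ww_0$ is a length-preserving bijection between the sets of shortest $w$ realizing $w\lambda \in -\bX^+$ and $w(-w_0\lambda) \in -\bX^+$.

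Next I would establish the key formulas
\[
\D(\oDelta_\lambda) \cong \onabla_{-w_0\lambda}, \qquad \D(\onabla_\lambda) \cong \oDelta_{-w_0\lambda}
\]
for Serre--Grothendieck duality, and
\[
(\oDelta_\lambda)^\sigma \cong \oDelta_{-w_0\lambda}, \qquad (\onabla_\lambda)^\sigma \cong \onabla_{-w_0\lambda}
\]
for the opposition functor. The duality computation uses Grothendieck duality for the proper birational map $\pi: \tcN \to \cN$ together with the fact that $\tcN = T^*(G/B)$ is symplectic (so $\pi^!\cO_\cN$ is trivial up to the chosen normalization), giving $\D \circ \pi_* \cong \pi_* \circ \D_{\tcN}$ and $\D_{\tcN}\cO_{\tcN}(\mu) \cong \cO_{\tcN}(-\mu)$. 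The opposition formula uses only that the action of $\sigma$ on $\bX$ is $\mu \mapsto -w_0\mu$, so that pulling back $\cO_{\tcN}(\mu)$ by $\sigma$ yields $\cO_{\tcN}(-w_0\mu)$; combining with the identity for $\delta^*$ gives the formulas.

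With these in hand, preservation of the supportive co-$t$-structure is essentially immediate from the generators descriptions of $\Db\CohGGm(\cN)^{\supp}_{\ge 0}$ and $\Db\CohGGm(\cN)^{\supp}_{\le 0}$: the functor $\D$ swaps these two subcategories (and negates $[n]\la k\ra$), while $({-})^\sigma$ preserves both, since it commutes with $[n]$ and $\la k\ra$. In either case the coheart is preserved, so the functor permutes indecomposable silting objects. To pin down the permutation, I would apply the functor to the defining factorization $\oDelta_\lambda \to \fS_\lambda \to \onabla_\lambda$. Using $\mu \le \lambda \iff -w_0\mu \le -w_0\lambda$, the generator description of $D(\le \lambda)$ in terms of $\oDelta_\mu$ (resp.\ $\onabla_\mu$) gives $\D(D(\le \lambda)) = D(\le -w_0\lambda)$ and $(D(\le \lambda))^\sigma = D(\le -w_0\lambda)$. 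The factorization becomes $\oDelta_{-w_0\lambda} \to \D(\fS_\lambda) \to \onabla_{-w_0\lambda}$ (resp.\ the $\sigma$-analogue), so once I know the induced composite is the canonical map $\iota_{-w_0\lambda}$, the uniqueness property of $\fS_{-w_0\lambda}$ forces the claimed identification.

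The main obstacle is therefore verifying that $\D(\iota_\lambda) = \iota_{-w_0\lambda}$ and $\iota_\lambda^\sigma = \iota_{-w_0\lambda}$ (up to nonzero scalar). I expect this to follow from two observations: first, in the perverse-coherent heart $\Hom(\oDelta_\mu, \onabla_\mu)$ is one-dimensional (the map $\iota_\mu$ being nonzero with simple image $\IC$, and a nonzero map between a standard and a costandard object in a highest-weight category being unique up to scalar); second, the image of $\D(\iota_\lambda)$, respectively $\iota_\lambda^\sigma$, is a Serre--Grothendieck dual or $\sigma$-twist of $\IC(C,\cL_\omega)$, hence nonzero. Together these force $\D(\iota_\lambda)$ and $\iota_\lambda^\sigma$ to agree with $\iota_{-w_0\lambda}$ up to scalar, which is all that is needed. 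I would expect the technical lemma of Appendix~\ref{sec:duality} to provide the convenient abstract framework for packaging this argument.
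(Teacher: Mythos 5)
Your proposal is correct and takes essentially the same route as the paper: the paper simply cites the four isomorphisms $\D(\oDelta_\lambda)\cong\onabla_{-w_0\lambda}$, $\D(\onabla_\lambda)\cong\oDelta_{-w_0\lambda}$, $(\oDelta_\lambda)^\sigma\cong\oDelta_{-w_0\lambda}$, $(\onabla_\lambda)^\sigma\cong\onabla_{-w_0\lambda}$ from \cite[Eq.~(3.6) and Proposition~4.1]{ah:pgl3} (for $\D$ they go back to \cite{a:pcsnc,bez:qes}) and declares the lemma immediate, which is precisely the deduction you spell out via the generator descriptions of $\Db\CohGGm(\cN)^{\supp}_{\gtrless 0}$, the compatibility of $-w_0$ with $\le$, and the defining characterization of $\fS_\lambda$. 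Two cosmetic remarks: the length-preserving bijection giving $\delta^*_{-w_0\lambda}=\delta^*_\lambda$ is $w\mapsto w_0ww_0$ rather than $w\mapsto ww_0$, and Lemma~\ref{lem:duality-unique} is not actually needed here (it enters only in the proof of Theorem~\ref{thm:co-t-struc}).
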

\begin{proof}
According to~\cite[Eq.~(3.6) and Proposition~4.1]{ah:pgl3}, we have
\begin{align*}
\D(\oDelta_\lambda) &\cong \onabla_{-w_0\lambda},
&
(\oDelta_\lambda)^\sigma &\cong \oDelta_{-w_0\lambda}, \\
\D(\onabla_\lambda) &\cong \oDelta_{-w_0\lambda},
&
(\onabla_\lambda)^\sigma &\cong \onabla_{-w_0\lambda}.
\end{align*}
The lemma follows immediately.  (In the case of $\D$, these isomorphisms have been known for much longer: see~\cite{a:pcsnc, bez:qes}.)
\end{proof}

\begin{thm}\label{thm:co-t-struc}
The orbitwise and supportive co-$t$-structures coincide.  More precisely, we have
\[
\fS_\lambda \cong \cS(C,\cT_\omega)
\]
where $\lambda$ corresponds to $(C,\omega)$ under the Lusztig--Vogan bijection.
\end{thm}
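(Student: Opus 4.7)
The plan is to verify that each orbitwise silting object $\cS(C,\cT_\omega)$ satisfies the two defining properties of the supportive silting object $\fS_\lambda$ with $\lambda := \Theta_\LV^{-1}(C,\omega)$: namely, that $\cS(C,\cT_\omega) \in D(\leq \lambda)$ and that the canonical map $\iota_\lambda$ of~\eqref{eqn:iota} factors through it. Indecomposability of $\cS(C,\cT_\omega)$ is Lemma~\ref{lem:silt-main}\eqref{it:s-indecomp}, so the uniqueness characterization of $\fS_\lambda$ will then yield $\fS_\lambda \cong \cS(C,\cT_\omega)$; since both co-$t$-structures thereby share the same indecomposable silting objects, they coincide by the uniqueness of a co-$t$-structure with prescribed coheart (cf.~\cite[Proposition~2.5]{ah:cot1}).

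For the inclusion $\cS(C,\cT_\omega) \in D(\leq \lambda)$, I would invoke~\eqref{eqn:pcoh-serre}: every composition factor $\IC(C',\cL_{\omega'})\la n\ra$ of every ${}^p\cH^i(\cS(C,\cT_\omega))$ must correspond under $\Theta_\LV$ to a weight ${}\leq \lambda$. Since $\cS(C,\cT_\omega)$ is supported on $\overline{C}$ by Lemma~\ref{lem:silt-main}\eqref{it:s-support}, each such $C'$ is contained in $\overline{C}$, so the desired inclusion reduces to the combinatorial assertion that the Lusztig--Vogan bijection sends the closure order on orbits into the dominance order on weights; this compatibility must be established along the way. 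For the factorization $\oDelta_\lambda \to \cS(C,\cT_\omega) \to \onabla_\lambda$ realizing $\iota_\lambda$, I would use that $\IC(C,\cL_\omega)$ appears as the distinguished composition factor of maximal support in $\cS(C,\cT_\omega)$, as is visible from $\cS(C,\cT_\omega)|_{\cN_C} \cong i_{C*}\cT_\omega[-\tfrac{1}{2}\codim C]\la \tfrac{1}{2}\codim C\ra$ (Lemma~\ref{lem:silt-main}\eqref{it:s-tilt}) together with the fact that $\cL_\omega$ occurs as a composition factor of the tilting bundle $\cT_\omega$. The existence of a morphism $\oDelta_\lambda \to \cS(C,\cT_\omega)$ can then be extracted by Ext-vanishing arguments, and the Serre--Grothendieck duality isomorphism $\D\oDelta_\lambda \cong \onabla_{-w_0\lambda}$ combined with Lemma~\ref{lem:total-duality}\eqref{it:total-sg} supplies the dual morphism $\cS(C,\cT_\omega) \to \onabla_\lambda$.

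The principal obstacle is twofold. First, the geometric/combinatorial compatibility of $\Theta_\LV$ with the closure order on nilpotent orbits is a nontrivial input and will need to be quoted or verified. Second, the construction of the factorization bridges the ``global'' objects $\oDelta_\lambda, \onabla_\lambda$ --- defined via the Springer resolution --- with the ``local'' orbitwise silting object $\cS(C,\cT_\omega)$, and controlling the relevant Hom-groups between the two families is delicate. This is precisely the kind of interpolation that the technical lemma of Appendix~\ref{sec:duality} appears designed to carry out, and I expect that lemma to be deployed essentially here to upgrade orbit-by-orbit Ext-vanishings to global ones; once both ingredients are secured, the remainder of the argument should be a formal consequence of the uniqueness characterizations already in force.
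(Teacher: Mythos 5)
Your plan has a genuine gap at its first step. To show $\cS(C,\cT_\omega) \in D(\le \lambda)$ you propose to use \eqref{eqn:pcoh-serre} together with the support statement of Lemma~\ref{lem:silt-main}\eqref{it:s-support}, reducing to the claim that $\Theta_\LV$ ``sends the closure order on orbits into the dominance order on weights.'' That claim is false: it is not true that $C' \subseteq \overline{C}$ forces $\Theta_\LV^{-1}(C',\omega') \le \Theta_\LV^{-1}(C,\omega)$ for \emph{every} $\omega' \in \Omega_{C'}$ --- already for $C' = \{0\}$, which lies in every closure, the pairs $(\{0\},\omega')$ exhaust arbitrarily large dominant weights. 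So support information alone cannot place $\cS(C,\cT_\omega)$ in $D(\le\lambda)$; you would need to control which composition factors $\IC(C',\cL_{\omega'})\la n\ra$ actually occur in the perverse cohomologies of $\cS(C,\cT_\omega)$, and the orbitwise construction (Ext-vanishing against tilting bundles on smaller orbits) gives no evident handle on this. The only compatibility available in the paper runs the other way: composition factors of $\oDelta_\lambda$ correspond to weights $\le\lambda$. A second, related gap: to invoke the characterization of $\fS_\lambda$ from \cite[Proposition~2.22]{ah:cot1} you must also know that $\cS(C,\cT_\omega)$ lies in the coheart of the \emph{supportive} co-$t$-structure, and that $\iota_\lambda$ genuinely factors through it; neither is supplied by the vague ``Ext-vanishing arguments,'' and knowing merely that $\IC(C,\cL_\omega)$ is a composition factor of ${}^p\cH^0(\cS(C,\cT_\omega))$ does not produce the factorization. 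In effect your plan presupposes most of what is to be proved.

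You have also misread the role of Appendix~\ref{sec:duality}: Lemma~\ref{lem:duality-unique} is not a device for globalizing orbitwise Ext-vanishing, but a uniqueness statement for silting subcategories stable under the antiautoequivalence $\delta = \D({-})^\sigma$ and the Tate twist, with each indecomposable admitting a unique self-dual twist. The paper's proof applies it directly: Lemmas~\ref{lem:total-duality} and~\ref{lem:supp-duality} show both silting subcategories satisfy the hypotheses, so the two co-$t$-structures coincide outright, with no verification of the defining properties of $\fS_\lambda$. Only afterwards is the induced bijection $\Theta$ identified with $\Theta_\LV$, by the one-sided inequality $\Theta_\LV^{-1}(\Theta(\lambda)) \le \lambda$ (obtained from $\IC(C,\cL_\omega)$ being a composition factor of ${}^p\cH^0(\fS_\lambda)$ and \eqref{eqn:pcoh-serre}) together with bijectivity and the finiteness of lower sets --- precisely the trick that sidesteps the order-compatibility your route would require. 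If you want to salvage a direct verification, you would have to prove that nontrivial compatibility of $\Theta_\LV$ with the relevant orders, which is substantially harder than the theorem itself.
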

\begin{proof}
Both the orbitwise and supportive co-$t$-structures are preserved by the functor $[1]\la -1\ra$.  Next, let $\delta: \Db\CohGGm(\cN)^\op \to \Db\CohGGm(\cN)$ be the functor given by $\delta(\cF) = \D(\cF)^\sigma$.  By Lemmas~\ref{lem:total-duality} and~\ref{lem:supp-duality}, both co-$t$-structures are preserved by $\delta$; moreover, for each indecomposable silting object $\cF$ (with respect to either co-$t$-structure), there is a unique integer $n$ such that $\delta(\cF[n]\la -n\ra) \cong \cF$.  Therefore, by Lemma~\ref{lem:duality-unique}, the two co-$t$-structures coincide.

In particular, for each $\lambda \in \bX^+$,  the object $\fS_\lambda$ is also an indecomposable silting object in the orbitwise co-$t$-structure, so it is isomorphic to $\cS(C,\cT_\omega)[n]\la -n\ra$ for some $(C,\omega) \in \Omega$ and some $n \in \Z$.  Since $\fS_\lambda$ is preserved by $\delta$, we must in fact have $n = 0$.  The isomorphisms $\fS_\lambda \cong \cS(C,\cT_\omega)$ determine a bijection
\[
\Theta: \bX^+ \simto \Omega.
\]
To finish the proof, it remains to show that $\Theta = \Theta_\LV$.

Suppose $\Theta(\lambda) = (C,\omega)$.  The description of $\cS(C,\cT_\omega)$ shows that $\cS(C,\cT_\omega)|_{\cN_C}$ is a perverse-coherent sheaf, and that $i_{C*}\cL_\omega[-\frac{1}{2}\codim C]\la \frac{1}{2}\codim C\ra$ occurs as a composition factor therein.  The restriction from $\cN$ to $\cN_C$ is $t$-exact for the perverse-coherent $t$-structure, and it sends any simple object to either $0$ or a simple object.  It follows that $\IC(C,\cL_\omega)$ occurs as a composition factor of ${}^p\cH^0(\cS(C,\cT_\omega))$.  Now let $\lambda' = \Theta_\LV^{-1}(C,\omega)$. Since $\cS(C,\cT_\omega) = \fS_\lambda$ lives in $D(\le \lambda)$, we see from~\eqref{eqn:pcoh-serre} that $\lambda' \le \lambda$, or in other words, that
\begin{equation}\label{eqn:theta-ineq}
\Theta_\LV^{-1}(\Theta(\lambda)) \le \lambda.
\end{equation}
This holds for all $\lambda \in \bX^+$.  In particular, if $\lambda$ is minimal with respect to $\le$, then $\Theta_\LV^{-1}(\Theta(\lambda)) = \lambda$.  For any $\lambda \in \bX^+$, the set $\{ \mu \in \bX^+ \mid \mu \le \lambda \}$ is finite, so by induction with respect to the partial order $\le$, we see that~\eqref{eqn:theta-ineq} implies that $\Theta_\LV^{-1}(\Theta(\lambda)) = \lambda$ for all $\lambda$, as desired.
\end{proof}

When $\bk$ has characteristic $0$, the representation theory of $G^{x_C}_\red$ is semisimple.  In particular, each indecomposable tilting vector bundle $\cT_\omega$ coincides with the irreducible vector bundle $\cL_\omega$.  In this situation, we obtain the following alternative description of $\cS(C,\cL_\omega)$.

\begin{cor}\label{cor:silt-char0}
If $\bk$ has characteristic $0$, then for every nilpotent orbit $C \subset \cN$ and every $\omega \in \Omega_C$, we have $\cS(C,\cL_\omega) \cong \IC(C,\cL_\omega)$.
\end{cor}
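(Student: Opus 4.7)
The plan is to apply the uniqueness clause of Lemma~\ref{lem:silt-main} by checking its four characterizing properties for $\IC(C, \cL_\omega)$; once these hold, that lemma immediately yields $\cS(C, \cL_\omega) \cong \IC(C, \cL_\omega)$.

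Since $\bk$ has characteristic $0$, the reductive group $G^{x_C}_\red$ is linearly reductive, so every irreducible representation is tilting and $\cT_\omega = \cL_\omega$. With this identification, properties (1)--(3) of Lemma~\ref{lem:silt-main} coincide with the standard defining features of $\IC(C,\cL_\omega)$ as a simple perverse-coherent sheaf: simplicity (hence indecomposability) in $\PCoh^\Gm(\cN)$, set-theoretic support on $\overline{C}$, and the prescribed local form $i_{C*}\cL_\omega[-\tfrac{1}{2}\codim C]\la \tfrac{1}{2}\codim C\ra$ on $\cN_C$.

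The substantive step is property~(4). First, from Remark~\ref{rmk:weyl-cot} applied to each orbit $C'$ via~\eqref{eqn:orbit-equiv}, together with the identity $\weyl_\upsilon = \coweyl_\upsilon = \cL_\upsilon$ available in characteristic $0$, I would extract the following concrete criterion: a complex $\cF \in \Db\CohGGm(C')$ lies in $\Db\CohGGm(C')_{\le 0}$ (respectively, in $\Db\CohGGm(C')_{\ge 0}$) as soon as every composition factor $\cL_\upsilon\la k\ra$ of each $\cH^i(\cF)$ satisfies $k \ge i$ (respectively, $k \le i$). This reduces by the truncation triangles of the coherent $t$-structure to the case of a single $\cL_\upsilon[-i]\la k\ra$, where it is immediate from the Remark.

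The strict $*$/$!$-support conditions characterizing an intermediate extension in the middle-perversity perverse-coherent $t$-structure on $\Db\CohGGm(\cN)$ then produce cohomology sheaves of $j_{C'}^*j_*\IC(C,\cL_\omega)$ and $j_{C'}^!j_*\IC(C,\cL_\omega)$ whose composition factors satisfy exactly the bounds $k \ge i$ and $k \le i$ once the middle-perversity shift $[-\tfrac{1}{2}\codim C']\la \tfrac{1}{2}\codim C'\ra$ is incorporated. Substituting into the criterion above yields property~(4), so Lemma~\ref{lem:silt-main} concludes the proof. The main obstacle is precisely this final bookkeeping---aligning the half-integer perversity (realized through the mixed $[\,\cdot\,]\la -\cdot\ra$-shift) between the perverse-coherent $t$-structure on $\cN$ and the orbitwise co-$t$-structure on each smaller orbit $C'$; once this identification is set up cleanly, the verification is automatic.
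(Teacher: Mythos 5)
Your reduction to the uniqueness clause of Lemma~\ref{lem:silt-main} is reasonable, and the easy parts are fine: in characteristic $0$ one has $\cT_\omega = \cL_\omega$, properties (1)--(3) hold for $\IC(C,\cL_\omega)$ by its defining description, and your sufficient criterion for membership in $\Db\CohGGm(C')_{\le 0}$ (resp.\ $\Db\CohGGm(C')_{\ge 0}$) in terms of composition factors $\cL_\upsilon\la k\ra$ of $\cH^i$ with $k \ge i$ (resp.\ $k \le i$) is consistent with the paper's conventions. The gap is the final, ``substantive'' step. The $*$/$!$-support conditions characterizing the intermediate extension in the perverse-coherent $t$-structure constrain only the cohomological degrees $i$ in which $\cH^i(j_{C'}^*j_*\IC)$ and $\cH^i(j_{C'}^!j_*\IC)$ may be nonzero, uniformly in the internal grading: the twist $\la 1\ra$ is exact for that $t$-structure and never enters its axioms. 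They therefore cannot yield the diagonal inequalities linking the $\Gm$-weight $k$ of a composition factor to its cohomological degree $i$, which is exactly what property~(4) of Lemma~\ref{lem:silt-main} demands; a factor $\cL_\upsilon\la k\ra$ in an allowed cohomological degree but with the wrong weight is compatible with the IC axioms while violating~(4). This is not mere bookkeeping: property~(4) is a purity-type refinement of the perverse-coherent support conditions, and it is precisely the nontrivial content of the corollary. The remark following Corollary~\ref{cor:silt-char0} makes the point concrete: in positive characteristic $\IC(C,\cT_\omega)$ still satisfies (1)--(3) and the ungraded strict support conditions, yet~\eqref{eqn:silt-ic} fails (e.g.\ for the subregular orbit of $\mathrm{PGL}_3$, by the computations of~\cite{ah:pgl3}), so the ungraded IC conditions cannot imply~(4), and the characteristic-$0$ input needed is more than semisimplicity of $\Rep(G^{x_C}_\red)$.

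For comparison, the paper does not argue via Lemma~\ref{lem:silt-main} at all: it uses Theorem~\ref{thm:co-t-struc} to identify $\cS(C,\cL_\omega)$ with $\fS_\lambda$ for $\lambda = \Theta_\LV^{-1}(C,\omega)$, then invokes \cite[Theorem~4.5]{ah:cot1} to write $\fS_\lambda \cong \pi_*\widetilde{\fS}_{w_0\lambda}$, identifies $\widetilde{\fS}_{w_0\lambda}$ with the simple exotic sheaf in characteristic $0$ via \cite[Theorem~3.9]{ahr:hcsv} and \cite[Lemma~3.4]{ah:cot1}, and concludes with \cite[Proposition~2.6]{a:nepcs}. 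Those cited results are where the graded vanishing you are trying to obtain for free actually comes from; to salvage your direct approach you would need an independent proof of the weight bounds in property~(4) for $\IC(C,\cL_\omega)$ in characteristic $0$, which is essentially equivalent in strength to the results the paper cites.
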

The following proof is short, but requires some notions that are not used elsewhere in this paper.  We refer the reader to~\cite{a:nepcs, ah:cot1, ahr:hcsv} for the relevant background.
\begin{proof}[Proof sketch]
Let $\lambda = \Theta_\LV^{-1}(C,\omega)$, so that $\cS(C,\cL_\omega) \cong \fS_\lambda$. According to~\cite[Theorem~4.5]{ah:cot1}, $\fS_\lambda$ is isomorphic to $\pi_*\widetilde{\fS}_{w_0\lambda}$, where $\pi: \tcN \to \cN$ is the Springer resolution, and $\widetilde{\fS}_{w_0\lambda}$ is a certain silting object on $\tcN$.  By~\cite[Theorem~3.9]{ahr:hcsv} (along with~\cite[Lemma~3.4]{ah:cot1}), $\widetilde{\fS}_{w_0\lambda}$ coincides with the simple exotic sheaf $\mathfrak{L}_{w_0\lambda}$.  Finally, by~\cite[Proposition~2.6]{a:nepcs}, $\pi_*\mathfrak{L}_{w_0\lambda}$ is the simple perverse-coherent sheaf corresponding to $\lambda$ under the Lusztig--Vogan bijection, i.e., $\IC(C,\cL_\omega)$.
\end{proof}

\begin{rmk}
The $\IC$ construction is defined for arbitrary equivariant vector bundles on an orbit, not just irreducible ones, so one may ask whether Corollary~\ref{cor:silt-char0} generalizes to positive characteristic with a statement of the form
\begin{equation}\label{eqn:silt-ic}
\cS(C,\cT_\omega) \cong \IC(C,\cT_\omega).
\end{equation}
This turns out to be false: the calculations in~\cite{ah:pgl3} yield counterexamples when $G = \mathrm{PGL}_3$, and $C$ is the subregular nilpotent orbit.

However,~\eqref{eqn:silt-ic} is (trivially) true when $C$ is the zero nilpotent orbit.  It is also true when $C$ is the \emph{regular} nilpotent orbit.  In this case, by~\cite[Proposition~4.8]{a:nepcs}, $(C,\omega)$ corresponds to a minuscule weight $\lambda \in \bX^+$.  The weight $w_0\lambda \in \bX$, which is ``antiminuscule'' in the terminology of~\cite{a:nepcs}, is minimal with respect to the partial order on $\bX$ used to define the both the exotic $t$-structure and supportive co-$t$-structure.  One can then deduce from the construction in~\cite{ah:cot1} that $\widetilde{\fS}_{w_0\lambda} \cong \mathfrak{L}_{w_0\lambda}$, and then the claim follows by the reasoning in the proof of Corollary~\ref{cor:silt-char0}.
\end{rmk}

\section{Proof of the relative Humphreys conjecture}
\label{sec:humphreys}

In this section, we assume that the characteristic $p$ of $\bk$ is larger than the Coxeter number $h$ for $G$.  Let $\mbf{G}$ be a reductive group whose first Frobenius twist $\mbf{G}^{(1)}$ is identified with $G$.  (Of course, this implies that $G$ and $\mbf{G}$ are isomorphic, but we do not identify them, as they play different roles in the discussion below.)  Let $\mbf{G}_{1}$ be the first Frobenius kernel of $\mbf{G}$.  The \emph{$\mbf{G}_{1}$-cohomology} of a $\mbf{G}$-module $M$ is defined by
\[
H^\bullet(\mbf{G}_{1},M) = \Ext^\bullet_{\mbf{G}_{1}}(\bk,M).
\]
A classical result~\cite{aj, fp:claag} (see also~\cite[Lemma~8.1]{ahr:hcsv}) states that there is a $G$-equivariant isomorphism of graded rings
\[
\Ext^\bullet_{\mbf{G}_{1}}(\bk,\bk) \cong \bk[\cN].
\]
Thus, for any $\mbf{G}$-module $M$, its $\mbf{G}_{1}$-cohomology has the structure of a graded $G$-equivariant module over $\bk[\cN]$, or equivalently, an object of $\CohGGm(\cN)$.  The goal of this section is to describe this module in the case where $M$ is a tilting $\mbf{G}$-module.  For $\lambda \in \bX^+$, let $\tilt(\lambda)$ be the indecomposable tilting $\mbf{G}$-module of highest weight $\lambda$.  

Let $W$ be the Weyl group of $\mbf{G}$, and let $\Wext = W \ltimes \bX$ be its extended affine Weyl group.  For $\lambda \in \bX$, let $t_\lambda$ denote the corresponding element of $\Wext$.  For $\lambda \in \bX^+$, let
\[
w_\lambda = \text{the unique element of minimal length in the double coset $W t_\lambda W \subset \Wext$.}
\]
Recall the ``$p$-dilated dot action'' of $\Wext$ on $\bX$: for $w = v \ltimes t_\lambda \in \Wext$ and $\mu \in \bX$, we set
\[
w \cdot \mu = v(\mu + p\lambda + \rho) -\rho,
\]
where, as usual, $\rho$ is one-half the sum of the positive roots.  It is well known (see, for instance,~\cite[Lemma~8.7]{ahr:hcsv}) that
\[
H^\bullet(\mbf{G}_{1},\tilt(\mu)) = 0
\qquad\text{unless}\qquad
\mu = w_\lambda \cdot 0\quad\text{for some $\lambda \in \bX^+$.}
\]
The following theorem describes $H^\bullet(\mbf{G}_{1},\tilt(\mu))$ in the case where $\mu = w_\lambda \cdot 0$ for some $\lambda \in \bX^+$.  It confirms a relative version of a conjecture due to Humphreys~\cite{hum:cmr} (cf.~\cite[Conjecture~8.10]{ahr:hcsv}), as well as part of a refinement of this conjecture proposed by the authors and S.~Riche in~\cite[Conjecture~5.7]{ahr:ctmap}.

\begin{thm}[Relative Humphreys Conjecture]\label{thm:relhc}
For $\lambda \in -\bX^+$, we have
\begin{equation}\label{eqn:relhum-formula}
H^k(\mbf{G}_{1},\tilt(w_\lambda \cdot 0)) \cong
\bigoplus_{i+j = k} R^i\Gamma(\cN,\cS(C,\cT))_j
\end{equation}
where $(C,\cT)$ corresponds to $w_0\lambda$ under the Lusztig--Vogan bijection.  In particular, as a coherent sheaf on $\cN$, $H^\bullet(\mbf{G}_{1}, \tilt(w_\lambda \cdot 0)$ is supported on $\overline{C}$, and 
\[
H^\bullet(\mbf{G}_{1},\tilt(w_\lambda \cdot 0))|_C \cong \cT.
\]
\end{thm}

In this statement, the notation $R^i\Gamma(\cN,\cS(C,\cT))_j$ denotes the $j$th graded component of the grading coming from the $\Gm$-action.

\begin{rmk}
The statement in~\cite[Conjecture~8.10]{ahr:hcsv} describes the support of the cohomology groups $H^\bullet(\mbf{G}_{1}, \tilt(w_\lambda \cdot 0))$ in terms of the two-sided Kazhdan--Lusztig cell containing $w_\lambda$, not in terms of the Lusztig--Vogan bijection.  However, it follows from~\cite[Remark~6]{bez:psaf} combined with the main result of~\cite{ahr:ies} that the nilpotent orbit appearing in~\cite[Conjecture~8.10]{ahr:hcsv} is the same as the one in Theorem~\ref{thm:relhc}.
\end{rmk}

\begin{proof}
An equation similar to~\eqref{eqn:relhum-formula}, but with $\cS(C,\cT)$ replaced by $\fS_\lambda$, is an immediate consequence of~\cite[Proposition~9.1]{ahr:hcsv} (see also~\cite[Lemma~6.3]{ah:cot1} and~\cite[Proposition~9.4]{ar:dkf}).  We obtain~\eqref{eqn:relhum-formula} by combining these results with Theorem~\ref{thm:co-t-struc}.

Regarding $H^\bullet(\mbf{G}_{1}, \tilt(w_\lambda \cdot 0))$ as a coherent sheaf, we can rewrite~\eqref{eqn:relhum-formula} as 
\[
H^\bullet(\mbf{G}_{1},\tilt(w_\lambda \cdot 0)) \cong
\bigoplus_{i} R^i\Gamma(\cN,\cS(C,\cT))\la -i\ra.
\]
The description of the support of $H^\bullet(\mbf{G}_{1},\tilt(w_\lambda \cdot 0))$ and its restriction to $C$ then follow from the properties of $\cS(C,\cT)$ listed in Lemma~\ref{lem:silt-main}.
\end{proof}

\section{Applications to the \texorpdfstring{$p$}{p}-canonical basis}
\label{sec:pcanonical}

In this section, we return to allowing $p$ to be pretty good for $G$.  Let $\sC$ denote the Grothendieck group of $\Db\CohGGm(\cN)$.  This is a module over the ring of Laurent polynomials $\Z[v,v^{-1}]$ under the rule $[\cF\la 1\ra] = -v^{-1}[\cF]$. In~\cite{ostrik}, Ostrik defined a certain $\Z[v,v^{-1}]$-basis for $\sC$, denoted by
\[
\{ \uC_\lambda \mid \lambda \in \bX^+ \},
\]
and called the \emph{canonical basis}.  This basis is closely related to the Kazhdan--Lusztig basis of the extended affine Hecke algebra $\cH_\ext$.  Specifically, according to the proof of~\cite[Lemma~2.6]{ostrik}, there is a $\Z[v,v^{-1}]$-linear map $\st: \cH_\ext \to \sC$ whose behavior on the Kazhdan--Lusztig basis $\{\uH_w  \mid w \in \Wext \}$ is given by
\begin{equation}\label{eqn:kl-st}
\st(\uH_w) =
\begin{cases}
\uC_\lambda & \text{if $w$ has minimal length in the coset $Wt_\lambda W \subset \Wext$,} \\
0 & \text{otherwise.}
\end{cases}
\end{equation}
(Note that $\sC$ is \emph{not} an $\cH_\ext$-module.  It is, however, a module over the center $Z(\cH_\ext)$ of $\cH_\ext$, and the map $\st$ is a $Z(\cH_\ext)$-module homomorphism.)  Ostrik conjectured, and Bezrukavikov later proved~\cite{bez:ctm, bez:psaf}, that
\begin{equation}\label{eqn:kl-ic}
\uC_\lambda = [\IC(C,\cL_\omega)]
\end{equation}
where $(C,\omega)$ corresponds to $\lambda$ under the Lusztig--Vogan bijection.

In recent years, the \emph{$p$-canonical basis} for $\cH_\ext$~\cite{jw}, denoted by $\{{}^p\uH_w \mid w \in \Wext\}$, has come to prominence: see, for instance~\cite{amrw,ar:dkf,rw}. It is natural to ask whether $\sC$ has some basis that should be called ``$p$-canonical.''  One approach is to generalize~\eqref{eqn:kl-st} by defining
\begin{equation}\label{eqn:pcan-st}
{}^p \uC_\lambda := \st({}^p\uH_w)
\qquad\text{where $w$ is the element of minimal length in $W t_\lambda W$.}
\end{equation}
(It can be deduced from~\cite{ahr:hcsv} that $\st({}^p\uH_w) = 0$ if $w$ is not minimal in some double coset for $W$.)  On the other hand, in view of Corollary~\ref{cor:silt-char0},~\eqref{eqn:kl-ic} suggests defining
\begin{equation}\label{eqn:pcan-cs}
{}^p \uC_\lambda := [\cS(C,\cT_\omega)]
\end{equation}
via the Lusztig--Vogan bijection.  In fact, Theorem~\ref{thm:co-t-struc} (combined with~\cite{ah:cot1, ahr:hcsv}) implies that these two definitions coincide.

In general, the $p$-canonical basis for $\cH_\ext$ satisfies
\[
{}^p \uH_w \in H_w + \sum_{v < w} \Z_{\ge 0}[v,v^{-1}] H_v,
\]
where ``$<$'' denotes the Bruhat order on $\Wext$.  From~\eqref{eqn:pcan-st} we deduce that
\[
{}^p \uC_\lambda \in \uC_\lambda + \sum_{\mu < \lambda} \Z_{\ge 0}[v,v^{-1}] \uC_\mu,
\]
where ``$<$'' now denotes the usual partial order on weights.  But~\eqref{eqn:pcan-cs} imposes a much strong constraint: if $\lambda$ corresponds to $(C,\omega)$ via $\Theta_\LV$, then
\begin{equation}\label{eqn:pcan-pos}
{}^p \uC_\lambda \in \uC_\lambda + \Bigg(\sum_{\substack{\mu < \lambda \\ \Theta_\LV(\mu) \in \Omega_C}} \Z_{\ge 0} \uC_\mu\Bigg)+ \Bigg(\sum_{C' \subset \partial C} \sum_{\substack{\mu < \lambda\\ \Theta_\LV(\mu) \in \Omega_{C'}}} \Z_{\ge 0}[v,v^{-1}]\uC_\mu\Bigg).
\end{equation}
Note that the first sum has \emph{integer} coefficients, rather than Laurent polynomials.

The extended affine Hecke algebra $\cH_\ext$ is categorified by the monoidal category of Iwahori-equivariant parity sheaves on the dual affine flag variety, denoted by $\Parity_I(\Fl)$.  For $w \in \Wext$, let $\cE_w \in \Parity_I(\Fl)$ denote the corresponding indecomposable parity sheaf.  Given a $G$-stable closed subset $Z \subset \cN$, let
\[
\Parity_I(\Fl)_Z =
\begin{array}{c}
\text{the full additive subcategory generated by $\cE_w[n]$ where $n \in \Z$} \\
\text{and $w$ is either not minimal in any double coset $Wt_\lambda W$,} \\ \text{or $w$ is minimal in some $W t_\lambda W$ such that $\supp \fS_\lambda \subset Z$.}
\end{array}
\]
Roughly,~\eqref{eqn:pcan-pos} says that if $w \in \Wext$ is minimal in the double coset $W t_\lambda W$, then the parity sheaf $\cE_w$ should be ``perverse modulo $\Parity_I(\Fl)_{\partial C}$.''  Given a nilpotent orbit $C \subset \cN$, we now define
\[
\cA_C := \Parity_I(\Fl)_{\overline{C}} / \Parity_I(\Fl)_{\partial C}.
\]
This category inherits from $\Parity_I(\Fl)$ the shift operation $[1]$.  Let
\[
\cA_C^\circ :=
\begin{array}{c}
\text{the full subcategory of $\cA_C$ generated by the} \\
\text{images of $\cE_w \in \Parity_I(\Fl)_{\overline{C}}$ (without shifts).}
\end{array}
\]

We expect that $\cA_C^\circ$ can be made into a monoidal category using Lusztig's ``truncated convolution'' operation $\odot$ (see~\cite{lus:cawgtc}), defined as follows: for $\cF, \cG \in \cA_C^\circ$, let
\[
\cF \odot \cG = {}^p\cH^{\frac{1}{2}\codim C}(\tilde\cF \star^I \tilde\cG) \mod \Parity_I(\Fl)_{\partial C},
\]
where $\tilde\cF$ and $\tilde\cG$ are lifts of $\cF$ and $\cG$ to $\Parity_I(\Fl)_{\overline{C}}$.  Moreover, we conjecture that there is an equivalence of monoidal categories
\[
(\cA_C^\circ, \odot) \cong (\Tilt(G^{x_C}_\red),\otimes).
\]

\appendix
\section{Co-\texorpdfstring{$t$}{t}-structures preserved by a duality}
\label{sec:duality}

The following uniqueness result is used in the proof of Theorem~\ref{thm:co-t-struc}.  In this statement, the term ``Tate twist'' is used as in~\cite[\S2.2]{ah:cot1}.

\begin{lem}\label{lem:duality-unique}
Let $\fD$ be a triangulated category equipped with a Tate twist $\lb 1\rb: \fD \to \fD$ and an antiautoequivalence $\delta: \fD^{\mathrm{op}} \simto \fD$ such that $\delta \circ \lb 1\rb \cong \lb {-1}\rb \circ \delta$.  There is at most one silting subcategory $\fS \subset \fD$ with the following properties:
\begin{enumerate}
\item $\fS$ is stable under $\lb 1\rb$ and $\delta$.
\item $\fS$ is a Krull--Schmidt category.
\item For each indecomposable object $S \in \fS$, there is a unique integer $n$ such that $\delta(S\lb n\rb) \cong S\lb n\rb$.
\end{enumerate}
\end{lem}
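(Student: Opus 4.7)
Let $\fS_1, \fS_2 \subset \fD$ be two silting subcategories satisfying (1)--(3), with associated bounded co-$t$-structures $(\fD^i_{\ge n}, \fD^i_{\le n})$. The plan is to prove $\fS_1 = \fS_2$ by showing that every indecomposable of $\fS_1$ lies in $\fS_2$ (and then invoking symmetry). I would first verify that $\delta$ swaps the aisles of each co-$t$-structure: since each aisle is generated under extensions by suitably shifted copies of the silting subcategory, $\fS_i$ is $\delta$-stable by (1), and $\delta$ is contravariant and reverses shifts, one deduces $\delta(\fD^i_{\ge n}) = \fD^i_{\le -n}$ and $\delta(\fD^i_{\le n}) = \fD^i_{\ge -n}$. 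Meanwhile, the Tate twist $\lb 1\rb$ preserves both aisles, because $\fS_i$ is $\lb 1\rb$-stable.

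Fix an indecomposable $S \in \fS_1$; by (3) and (1), normalize $S$ via a Tate twist so that $\delta(S) \cong S$. By boundedness of the $\fS_2$-co-$t$-structure, set $a = \max\{n : S \in \fD^2_{\ge n}\}$ and $b = \min\{n : S \in \fD^2_{\le n}\}$, with $a \le b$. Applying the aisle-swap to the self-duality of $S$ gives $b \le -a$ and $a \ge -b$, whence $a + b = 0$ and $a \le 0 \le b$. In the case $a = b = 0$, one has $S \in \fS_2$, which is the desired conclusion.

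The principal obstacle is to exclude the case $a < 0 < b$. My approach is to consider the weight decomposition $X \to S \to Y \to X[1]$ of $S$ at weight $a$. Using $X[1] \in \fD^2_{\ge a}$ and $S \in \fD^2_{\ge a}$, extension-closure of the aisle forces $Y \in \fD^2_{\ge a} \cap \fD^2_{\le a} = \fS_2[-a]$; that is, $Y \cong T_2[-a]$ for some $T_2 \in \fS_2$. Analogously, the weight decomposition $X' \to S \to Y'$ at weight $b-1$ gives $X' \cong T_1[-b]$ for some $T_1 \in \fS_2$, and the aisle-swap together with uniqueness (up to non-canonical isomorphism) of weight decompositions identifies $X' \cong \delta Y$ and $Y \cong \delta X'$. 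The composition $X' \to S \to Y$ then lies in $\Hom(T_1[-b], T_2[-a]) = \Hom(T_1, T_2[2b])$, which vanishes by the silting axiom for $\fS_2$ since $2b > 0$. Consequently $X' \to S$ factors through $X$, producing a map $\phi: X' \to X$. Applying $\delta$ to $\phi$ and combining the resulting symmetric data with the Krull--Schmidt structure of $\fS_1$ and the locality of $\End(S)$, one extracts a nontrivial idempotent of $\End(S)$, contradicting indecomposability of $S$. Once the case $a < 0 < b$ is ruled out, $S \in \fS_2$, and the symmetric argument swapping $\fS_1 \leftrightarrow \fS_2$ yields $\fS_1 = \fS_2$. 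The hardest part of this plan is the final combinatorial step in which the $\delta$-symmetric factorizations are assembled into a nontrivial idempotent; this requires a careful octahedral analysis of the two weight decompositions together with the orthogonality of the $\fS_2$-aisles.
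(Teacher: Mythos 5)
Your preliminary reductions are essentially sound: normalizing $S$ so that $\delta(S)\cong S$, showing that the $\fS_2$-weight range of $S$ is an interval $[a,b]$ with $a+b=0$ and $a\le 0\le b$, identifying the outer pieces of the weight decompositions at weights $a$ and $b-1$ with shifted coheart objects, and deducing from $\Hom(T_1,T_2[2b])=0$ that $X'\to S$ factors through a map $\phi\colon X'\to X$. One misstatement along the way: weight decompositions for a co-$t$-structure are \emph{not} unique up to isomorphism (already the zero object admits, for every coheart object $T$ and a suitable shift, a weight decomposition whose two pieces are $T$ and $T[1]$), so you cannot invoke ``uniqueness up to non-canonical isomorphism'' to identify $X'\cong\delta Y$. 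This particular point is repairable: since $a+b=0$, you may simply \emph{define} the decomposition at weight $b-1$ to be the image under $\delta$ of the one at weight $a$.

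The genuine gap is the endgame, which is where all of the content of the lemma lies. Ruling out $a<0<b$ is exactly the hard step, and your proposal only asserts that ``a careful octahedral analysis'' will extract a nontrivial idempotent of $\End(S)$; no such construction is indicated, and I do not see how one could exist in this form: the maps you have built ($f\colon X\to S$, $g\colon S\to Y$, $\phi\colon X'\to X$, and their $\delta$-images) never compose to an endomorphism of $S$, so there is no visible source of idempotents in $\End(S)$ at all. Note also that your self-dual normalization forfeits the Hom-vanishing that drives the paper's argument: the paper shifts the tested object so that its \emph{lowest} weight is $0$, and then $\Hom(S'[n],\delta(S')[-n])\cong\Hom(S',\delta(S')[-2n])=0$ because $\fS'$ is silting and $-2n>0$; with $\delta(S)\cong S$ the corresponding group is $\End(S)\ne 0$. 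Relatedly, your argument never uses the silting (Hom-vanishing) property of $\fS_1$, which any correct proof must, since without it an indecomposable $\delta$-fixed object with spread weights can certainly exist. What the paper actually does here is delicate: it chooses the weight decomposition $A\to S'[n]\to B\to{}$ so as to \emph{minimize the number of indecomposable summands} of the coheart piece $B$ (using Krull--Schmidt), combines the two orthogonality vanishings into an identity $\delta(p)p=\phi h-\delta(h)\phi'$ for an indecomposable summand $T$ of $B$ normalized so that $\delta(T)\cong T$, and then uses the nine lemma together with the minimality to show that both terms yield non-invertible elements of the local ring $\End(T)$, contradicting the fact that their sum is $\id_T$. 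In particular the contradiction lives in $\End(T)$ for a coheart summand, not in $\End(S)$, and it hinges on the minimality device your sketch lacks. As written, the case $a<0<b$ is not excluded, so the proof is incomplete at its decisive step.
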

\begin{proof}
Suppose we have two silting subcategories $\fS, \fS' \subset \fD$ satisfying these properties.  Let $(\fD_{\ge 0}, \fD_{\le 0})$ be the bounded co-$t$-structure on $\fD$ corresponding to $\fS$.

Suppose there exists an object $S' \in \fS'$ such that $S' \notin \fD_{\ge 0}$.  We will derive a contradiction from this.  Let $n$ be the smallest integer such that $S' \in \fD_{\ge n}$, so that $S'[n] \in \fD_{\ge 0}$. (We necessarily have $n < 0$.)  Then it is possible to find a distinguished triangle
\begin{equation}\label{eqn:silt-mindt}
A \xrightarrow{f} S'[n] \xrightarrow{g} B \xrightarrow{h}
\qquad\text{with $A \in \fD_{\ge 1}$ and $B \in \fS$.}
\end{equation}
Choose such a distinguished triangle in which the number of indecomposable direct summands of $B$ is minimized.  This number is well defined because $\fS$ is Krull--Schmidt, and it is nonzero because $S'[n] \notin \fD_{\ge 1}$.

Next, choose an indecomposable summand $T$ of $B$. Let $i: T \to B$ and $p: B \to T$ be the inclusion and projection maps, so that $pi = \id_T$, and $ip$ is idempotent.  

By applying a suitable Tate twist, we may assume without loss of generality that $\delta(T) \cong T$.  Fix an identification of these two objects.  Then $T$ is a direct summand of $\delta(B)$, and $\delta(p): T \to \delta(B)$ and $\delta(i): \delta(B) \to T$ are the inclusion and projection maps, respectively.

Consider the diagram below, in which the rows are distinguished triangles.  (The dotted arrows will be explained later.)
\[
\begin{tikzcd}[row sep=small]
& T \ar[d, "i"] \\
S'[n] \ar[r, "g"] & B \ar[d, "p"] \ar[r, "h"] \ar[ddl, dashed, "\phi'"'] & {A[1]} \ar[r, "{-f[1]}"] \ar[ddl, dashed, "\phi"'] \ar[dd, dashed, "\psi"] & {} \\
& T \ar[d, "\delta(p)"'] \\
\delta(A)[-1] \ar[r,"\delta(h)"] & \delta(B) \ar[d, "\delta(i)"] \ar[r, "\delta(g)"] & \delta(S')[-n] \ar[r, "\delta(f)"] & {} \\
& T
\end{tikzcd}
\]
Because $\fS'$ is a silting subcategory, we have $\Hom(S'[n], \delta(S')[-n]) = 0$.  In particular, $\delta(g)\delta(p)pg = 0$, so $\delta(g)\delta(p)p$ factors through $h$: there exists a map $\psi: A[1] \to \delta(S')[-n]$ such that
\[
\psi h = \delta(g)\delta(p)p.
\]
Because $\fS$ is a silting subcategory, we have $\Hom(A[1], \delta(A)) = 0$, so $\delta(f)\psi = 0$.  Therefore, there is a map $\phi: A[1] \to \delta(B)$ such that
\[
\delta(g)\phi = \psi.
\]
We now have $\delta(g)\phi h = \delta(g)\delta(p)p$, or $\delta(g)(\phi h - \delta(p)p) = 0$.  Therefore, there exists a map $\phi': B \to \delta(A)[-1]$ such that $\delta(h)\phi' = \phi h - \delta(p)p$, or
\begin{equation}\label{eqn:silt-htpy}
\delta(p)p = \phi h - \delta(h)\phi'.
\end{equation}

By the nine lemma, there exist objects $B'$ and $A'$ such that the rows and columns of the following diagram are distinguished triangles:
\[
\begin{tikzcd}
S'[n] \ar[r] \ar[d] & B' \ar[r] \ar[d] & A'[1] \ar[r]\ar[d] & {} \\
S'[n] \ar[r,"g"] \ar[d] & B \ar[r, "h"] \ar[d, "\delta(i)\phi h"'] & A[1] \ar[r, "{-f[1]}"] \ar[d, "\delta(i)\phi" ] & {} \\
0 \ar[r] \ar[d] & T \ar[r, equal] \ar[d] & T \ar[r] \ar[d] & {} \\
{} & {} & {}
\end{tikzcd}
\]
The rightmost column shows that $A' \in \fD_{\ge 1}$.  Now consider the map $\delta(i)\phi h i \in \End(T)$.  If this were an isomorphism, then the middle column above would split (by the map $i(\delta(i)\phi h i)^{-1}: T \to B$), and $B'$ would be a direct summand of $B$, and hence an object of $\fS$ with fewer indecomposable summands than $B$.  But then the top row would contradict the minimality in the choice of~\eqref{eqn:silt-mindt}.  We conclude that $\delta(i)\phi h i$ is not invertible.

Similar reasoning shows that $-\delta(i)\delta(h)\phi' i \in \End(T)$ is also not invertible.  Since $\End(T)$ is a local ring, the sum of two nonunits is again a nonunit, and thus $\delta(i)\phi h i - \delta(i) \delta(h) \phi' i$ is not invertible.  But by~\eqref{eqn:silt-htpy}, we have
\[
\delta(i)\phi h i - \delta(i) \delta(h) \phi' i = \delta(i)\delta(p)p i = \id_T,
\]
a contradiction.  

We conclude that $S' \in \fD_{\ge 0}$, and hence that $\fS' \subset \fD_{\ge 0}$.  Applying $\delta$, we also have $\fS' \subset \fD_{\le 0}$, so $\fS' \subset \fS$.  Reversing the roles of the two categories, we also obtain $\fS' \supset \fS$, and hence $\fS = \fS'$.
\end{proof}

\end{document}